\numberwithin{equation}{section}
\newtheorem{coro}[equation]{Corollary}
\newtheorem*{conjA}{Conjecture~$\ConjA$}
\newtheorem*{conjAp}{Conjecture~$\ConjAp$}
\newtheorem{lemm}[equation]{Lemma}
\newtheorem{prop}[equation]{Proposition}
\newtheorem*{thrmA}{Theorem~1}
\newtheorem*{prop16}{Proposition~1.6}
\newtheorem*{prop114}{Proposition~1.14}
\theoremstyle{definition}
\newtheorem{defi}[equation]{Definition}
\newtheorem{exam}[equation]{Example}
\newtheorem{rema}[equation]{Remark}
\definecolor{color1}{rgb}{.88,.88,.88}
\definecolor{color2}{rgb}{1,.85,0.85}
\definecolor{color20}{rgb}{.88,.88,.88}
\definecolor{color21}{rgb}{1,.8,.8}
\definecolor{color22}{rgb}{1,.75,.75}
\definecolor{color3}{rgb}{1,0,0}
\renewcommand\aa{a}
\newcommand\aav{\underline\aa}
\newcommand\act{\mathbin{\scriptscriptstyle\bullet}}
\newcommand\Att{\widetilde{\VR(2.1,0)\smash{\mathrm{A}}}_2}
\newcommand\bb{b}
\newcommand\bbv{\underline\bb}
\newcommand\brk{\mathrm{\scriptscriptstyle b\HS{-0.1}r\HS{-0.1}e\HS{-0.1}a\HS{-0.1}k}}
\newcommand\can{\iota}
\newcommand\cc{c}
\newcommand\ccv{\underline\cc}
\newcommand\cl[1]{[#1]}
\newcommand\clp[1]{[#1]^{\HS{-0.2}\scriptscriptstyle+}}
\newcommand\ConjA{\mathbf{A}}
\newcommand\ConjAp{\smash{\mathbf{A}{\HS{-0.5}\raisebox{4pt}{\mbox{$\mathrm{\scriptscriptstyle p\HS{-0.1}a\HS{-0.1}d\HS{-0.1}d\HS{-0.1}e\HS{-0.1}d}$}}}}}
\newcommand\dd{d}
\renewcommand\dh[1]{\Vert#1\Vert}
\newcommand\dive{\le}
\newcommand\EG[1]{\mathcal{U}(#1)}
\newcommand\equivp{\equiv^{\HS{-0.1}\scriptscriptstyle+}}
\newcommand\ew{\varepsilon}
\newcommand\f[1]{\mathrm{f}[#1]}
\newcommand\ff{f}
\newcommand\FR[1]{\mathcal{F}_{\HS{-0.4}#1}}
\renewcommand\gcd{\wedge}
\newcommand\gcdt{\mathbin{\widetilde\wedge}}
\renewcommand\ge{\geqslant}
\renewcommand\gg{g}
\newcommand\GG{G}
\newcommand\GR[2]{\langle#1\mid#2\rangle}
\newcommand\HS[1]{\hspace{#1ex}}
\newcommand\ii{i}
\newcommand\inv{^{-1}}
\newcommand\INV[1]{\overline{#1}}
\newcounter{ITEM}
\newcommand\ITEM[1]{\setcounter{ITEM}{#1}\leavevmode\hbox{\rm(\roman{ITEM})}}
\newcommand\kk{k}
\renewcommand\l[1]{\mathrm{l}[#1]}
\newcommand\lcm{\vee}
\newcommand\lcmt{\mathbin{\widetilde\lcm}}
\renewcommand\le{\leqslant}
\newcommand\mm{m}
\newcommand\mst{m_{st}} 
\newcommand\MM{M}
\newcommand\MMt{M_0}
\newcommand\MON[2]{\langle#1\mid#2\rangle^{\HS{-0.5}\scriptscriptstyle+}}
\newcommand\nn{n}
\newcommand\NNNN{\mathbb{N}}
\newcommand\One[1]{\underline1_{#1}}
\newcommand\one{\underline1}
\newcommand\pdots{\HS{0.2}{\cdot}{\cdot}{\cdot}\HS{0.2}}
\newcommand\pp{p}
\newcommand\PropH{\mathrm{H}}
\newcommand\qq{q}
\newcommand\rd{\Rightarrow}
\newcommand\RD[1]{\mathcal{R}_{\scriptscriptstyle\HS{-0.3}#1}}
\newcommand\rds{\rd^{\HS{-0.3}*}}
\newcommand\rdsp{\rd_{\HS{-0.4}\spl}}
\newcommand\Red[1]{R_{#1}}
\newcommand\Redbrk[1]{R_{#1}^{\HS{0.2}\brk}}
\newcommand\Redtrim[1]{R^{\mathrm{\HS{0.2}\scriptscriptstyle t\HS{-0.1}r\HS{-0.1}i\HS{-0.1}m}}_{#1}}
\newcommand\resp{\mbox{\it resp}.\ }
\newcommand\rr{r}
\newcommand\RR{R}
\newcommand\RRR{\mathcal{R}}
\newcommand\RRt{\widetilde{R}}
\newcommand\sdots{ / \pdots / }
\newcommand\SD[1]{\mathcal{S}_{\scriptscriptstyle\HS{-0.3}#1}}
\newcommand\spl{\mathrm{\scriptscriptstyle s\HS{-0.1}p\HS{-0.1}l\HS{-0.1}i\HS{-0.1}t}}
\newcommand\spec{\rightsquigarrow}
\renewcommand\ss{s}
\renewcommand\SS{S}
\newcommand\SSb{\overline{S}}
\newcommand\SSS{\mathcal{S}}
\renewcommand\tt{t}
\newcommand\tta{\mathtt{a}}
\newcommand\ttb{\mathtt{b}}
\newcommand\ttc{\mathtt{c}}
\newcommand{\ua}{\underline{a}}
\newcommand{\ub}{\underline{b}}
\newcommand\under{\ensuremath{\mathbin{\backslash}}}
\newcommand\uu{u}
\def\VR(#1,#2){\vrule width0pt height#1mm depth#2mm}
\newcommand\vv{v}
\newcommand\wdots{, ...\HS{0.2},}
\newcommand\wl{\lambda}
\newcommand\ww{w}
\newcommand\xx{x}
\newcommand\yy{y}
\newcommand\zz{z}
\title[Padding and AT monoids of sufficiently large type]{Multifraction reduction IV: Padding and Artin--Tits monoids of sufficiently large type}
\author{Patrick Dehornoy}
\address{P.D., Laboratoire de Math\'ematiques Nicolas Oresme, CNRS UMR 6139, Universit\'e de Caen, 14032 Caen cedex, France, and Institut Universitaire de France}
\email{patrick.dehornoy@unicaen.fr}
\urladdr{www.math.unicaen.fr/\~{}dehornoy}
\author{Derek F. Holt}
\address{D.H., Mathematics Institute, University of Warwick, Coventry CV4 7AL, UK}
\email{D.F.Holt@warwick.ac.uk}
\author{Sarah Rees}
\address{S.R., School of Mathematics and Statistics, University of Newcastle, Newcastle NE3 1ED, UK}
\email{Sarah.Rees@newcastle.ac.uk}
\keywords{Artin-Tits monoid; Artin-Tits group; large type; sufficiently large type; gcd-monoid; enveloping group; word problem; Property H; multifraction; reduction; subword reversing}
\subjclass[2000]{20F36, 20F10, 20M05, 68Q42, 18B40, 16S15}
\begin{document}


\maketitle

\begin{abstract}
We investigate the padded version of reduction, an extension of multifraction reduction as defined in \texttt{arXiv:1606.08991}, and connect it both with ordinary reduction and with the so-called Property~$\PropH$. As an application, we show that all Artin--Tits groups of sufficiently large type satisfy some weakening Conjecture~$\ConjAp$ of Conjecture~$\ConjA$, thus showing that the reduction approach is relevant for these groups. 
\end{abstract}

Reduction of multifractions, which was introduced in~\cite{Dit} and~\cite{Diu}, is a new approach to the word problem for Artin-Tits groups and, more generally, for groups that are enveloping groups of monoids in which the divisibility relations have weak lattice properties (``gcd-monoids''). It is based on a rewrite system (``$\RRR$-reduction'') that extends the usual free reduction for free groups, as well as the rewrite systems known for Artin--Tits groups of spherical type, and more generally Garside groups. It was proved in~\cite{Dit} that $\RRR$-reduction is convergent for all Artin--Tits groups of type~FC, and in~\cite{Diu} that a certain condition called semi-convergence, weaker than convergence, is sufficient to obtain the decidability of the word problem, leading to the main conjecture (``Conjecture~$\ConjA$'') that $\RRR$-reduction is semi-convergent for every Artin--Tits monoid.

The aim of the current paper is to exploit the observation that semi-convergence up to Turing-computable padding, a weakening of semi-convergence, is again sufficient to solve the word problem. By {\em padding}, we mean the insertion of an even number of trivial components at the beginning of a multifraction.

The main results we prove are as follows. First, we have a simple criterion for the word problem:

\begin{prop16}
If $\MM$ is a strongly noetherian gcd-monoid with finitely many basic elements, for which $\RRR$-reduction is semi-convergent up to $\ff$-padding for some Turing-computable map~$\ff$, then the word problem of $\EG\MM$ is decidable.
\end{prop16}

Next, we establish a simple connection between the padded version of semi-conver\-gence, the semi-convergence of a variant of $\RRR$-reduction (``split reduction'' or ``$\SSS$-reduction'') and Property~$\PropH$ of~\cite{Dia, Dib, GR}:

\begin{prop114}
If $\MM$ is a gcd-monoid and $(\SS, \RR)$ is an lcm-presentation for~$\MM$, then the following are equivalent:\\
\null\HS3\ITEM1 $\RRR$-reduction is semi-convergent for~$\MM$ up to padding;\\
\null\HS3\ITEM2 $\SSS$-reduction is semi-convergent for~$\MM$;\\
\null\HS3\ITEM3 Property~$\PropH$ is true for~$(\SS, \RR)$.
\end{prop114}

Finally, we consider the specific case of Artin--Tits groups. In view of Proposition~\ref{P:PaddedSC2WP}, we propose

\begin{conjAp}
For every Artin--Tits monoid, $\RRR$-reduction is semi-convergent up to $\ff$-padding for some Turing-computable map~$\ff$.
\end{conjAp}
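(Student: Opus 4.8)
Conjecture~$\ConjAp$ is, as it stands, open; what follows is the route by which it can be established for Artin--Tits monoids of \emph{sufficiently large type}, which is the case this paper settles. Let $\MM$ be such a monoid, with standard generating set $\SS$ and standard lcm-presentation $(\SS,\RR)$, the relations of $\RR$ being the braid relations $\ss\tt\ss\cdots = \tt\ss\tt\cdots$ with $m_{\ss\tt}$ letters on each side. The first step is formal: by Proposition~1.14, $\RRR$-reduction is semi-convergent for $\MM$ up to padding exactly when Property~$\PropH$ holds for $(\SS,\RR)$, so it suffices to prove Property~$\PropH$ for $(\SS,\RR)$ and then to verify that the padding map thereby obtained can be taken Turing-computable --- the latter being the quantitative content of Conjecture~$\ConjAp$ that lies beyond Proposition~1.14.

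For the quantitative point I would make the relevant direction of Proposition~1.14 effective. Given a word $w$ over $\SSs$ representing~$1$ in $\EG{\MM}$, the number of trivial components that must be prepended before $w$ $\RRR$-reduces to the trivial multifraction --- equivalently, the length of the associated $\SSS$-reduction --- is bounded in terms of the area of a van Kampen diagram for $w$ over $(\SS,\RR)$. Since Artin--Tits groups of sufficiently large type are shortlex automatic, and hence satisfy a quadratic isoperimetric inequality (Holt--Rees), one gets an explicit --- indeed polynomial --- map $\ff$, which is in particular Turing-computable; feeding $\ff$ into Proposition~1.6 then reproves decidability of the word problem, though here this is only a by-product, the real point being that the reduction machinery itself applies to these groups.

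The substance of the argument is Property~$\PropH$ for $(\SS,\RR)$ under the sufficiently-large-type hypothesis. I would exploit the small-cancellation geometry it provides: when every $m_{\ss\tt}$ is at least~$3$ and no three generators $\ss,\tt,\uu$ satisfy $m_{\ss\tt}=m_{\ss\uu}=m_{\tt\uu}=3$, \ie\ there is no $\Att$-configuration, the standard presentation of $\MM$ has the strong small-cancellation properties of Appel--Schupp, and the exclusion of $\Att$-configurations sharpens them into the non-positive-curvature behaviour of reduced van Kampen diagrams exploited by Holt--Rees --- concretely, every reduced diagram over $(\SS,\RR)$ contains a boundary $2$-cell exposing a long arc of its boundary. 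Property~$\PropH$ --- roughly, that a word representing~$1$ can be driven to the empty word by a controlled sequence of elementary moves, each deleting a relator subword or performing a local rewriting prescribed by $\RR$ --- should then follow by induction on the area of such a diagram: the exposed boundary cell supplies the required elementary move, and excising it strictly decreases the area.

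The step I expect to be the main obstacle is reconciling this geometric peeling with the precise algebraic content of Property~$\PropH$, which is a statement about factorizations in the \emph{monoid} $\MM$ --- the orientation of letters, the splitting structure underlying $\SSS$-reduction --- and not merely about equality in the group. One must guarantee that each peeling move stays on the monoid side, that it respects the bookkeeping carried by split multifractions, and that the induction parameter (diagram area, or a refinement that also records divisibility data in $\MM$) genuinely decreases, while checking that the degenerate configurations the absence of $\Att$ is designed to forbid truly cannot occur. Isolating a clean diagram-surgery lemma that controls the small-cancellation geometry and the gcd--lcm structure of $\MM$ simultaneously is the crux; by comparison, the two reductions above and the extraction of $\ff$ are routine once it is in hand.
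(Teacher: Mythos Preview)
Your characterization of ``sufficiently large type'' is incorrect. It is \emph{not} the condition that all $m_{\ss\tt}\ge 3$ together with the exclusion of $\Att$-triangles; on the contrary, $\Att$ itself \emph{is} of sufficiently large type (indeed, of large type). The correct definition is that in every triangle of the Coxeter diagram either no edge has label~$2$, or all three do, or at least one has label~$\infty$; so commuting pairs are allowed, and no $\Att$ configuration is forbidden. This undermines the Appel--Schupp small-cancellation heuristic you invoke, which in any case pertains to (extra-)large type and does not match the actual combinatorics of the sufficiently-large-type family.

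Even setting this aside, the route you propose --- prove Property~$\PropH$ by peeling boundary cells of van Kampen diagrams, then invoke a quadratic isoperimetric inequality to bound the padding --- differs substantially from what the paper does. The paper does not pass through Proposition~1.14 to establish Theorem~1; Proposition~1.14 is used only afterwards, to deduce Property~$\PropH$ as a corollary and thereby recover the main result of Godelle--Rees. Instead, the paper proves directly that $\One{2\pp}/\aav\rds\one$ with the explicit bound $\pp=3\ell(\ell+2)/4$, by \emph{simulating inside $\RRR$-reduction} the Holt--Rees rewriting system for sufficiently-large-type groups: every non-geodesic word admits a ``leftward length-reducing critical sequence'' of $\tau^+$-moves (length-preserving $2$-generator rewrites followed by a single free cancellation), and the technical core of Section~2 shows how each $\tau^+$-move can be realised by a bounded number of $\RRR$-steps on a suitably padded multifraction, using the fractional normal forms $\aa\bb\inv$ and $\cc\inv\dd$ in the relevant dihedral parabolic submonoid. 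The explicit quadratic padding bound falls out of counting these steps; no separate appeal to automaticity or Dehn functions is needed. Your proposed link between padding length and van Kampen area is plausible in spirit but is not established anywhere, and making it precise would itself require an argument of roughly the same depth as the direct simulation.
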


By the results of~\cite{Dit}, Conjecture~$\ConjAp$ is true for every Artin--Tits monoid of type~FC. Here we prove:

\begin{thrmA}
Conjecture~$\ConjAp$ is true for all Artin--Tits monoids of sufficiently large type.
\end{thrmA}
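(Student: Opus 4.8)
The plan is to prove Theorem~1 by reducing it, via Proposition~1.14, to a verification of Property~$\PropH$ for a suitable lcm-presentation of the Artin--Tits monoid. Concretely, given an Artin--Tits monoid $\MM$ of sufficiently large type, I would work with the standard lcm-presentation $(\SS, \RR)$ in which $\SS$ is the set of standard generators and $\RR$ consists of the braid relations $\mathtt{sts\cdots} = \mathtt{tst\cdots}$ (each of length $\mst$). By Proposition~1.14, it suffices to show that Property~$\PropH$ holds for $(\SS, \RR)$; since $\PropH$ is a statement about the existence of certain ``reversing'' amalgamation diagrams, the hypothesis of sufficiently large type (meaning all $\mst \ge 3$, with a suitable uniform lower bound ruling out the delicate $\mst = 3$ configurations — e.g.\ the condition used by Holt--Rees in their work on such groups) should give enough room to build these diagrams by hand.

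The key steps, in order, would be: (1) recall the precise combinatorial formulation of Property~$\PropH$ for an lcm-presentation, namely that whenever two letters of $\SS$ admit a common right-multiple, the associated right-reversing process terminates and produces the expected lcm, \emph{and} that a certain cube/coherence condition on triples holds; (2) observe that in an Artin--Tits monoid, two generators $\mathtt{s}, \mathtt{t}$ have a common right-multiple iff $\mst < \infty$, in which case their right-lcm is $\Delta_{\mathtt{s}\mathtt{t}}$, the Garside element of the rank-two parabolic $\langle \mathtt{s}, \mathtt{t}\rangle$; (3) use the large-type hypothesis to control the shape of these rank-two lcms and, crucially, to analyse the interaction of three generators $\mathtt{s},\mathtt{t},\mathtt{r}$ — this is where one exploits that the Artin--Tits group of sufficiently large type is known to be CAT(0)/biautomatic (Brady--McCammond, Huang--Osajda) or, more elementarily, that its Cayley complex has a well-understood local structure, so that the only common multiples of $\{\mathtt{s},\mathtt{t},\mathtt{r}\}$ that arise are forced by the rank-two relations; (4) assemble these local verifications into the global $\PropH$ diagram-completion statement, using noetherianity of $\MM$ to run the induction on length; (5) invoke Proposition~1.14 to conclude that $\RRR$-reduction is semi-convergent for $\MM$ up to padding, and then Proposition~1.6 (or rather the remark that padding by the identity map is Turing-computable) to package this as Conjecture~$\ConjAp$.

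I expect the main obstacle to be step~(3): establishing that there are no ``unexpected'' common multiples of three (or more) generators beyond those predicted by the pairwise relations. In an Artin--Tits monoid of type~FC one has a rich supply of spherical parabolics that make the FC machinery of~\cite{Dit} work; in the large-type case most rank-three parabolics are \emph{infinite}, so one cannot appeal to any Garside structure there and must instead argue directly that the right-reversing grammar on three letters closes up. The technical heart will be a careful case analysis of the reversing of words of the form $\INV{\mathtt{s}}\,\mathtt{t}$, tracking how the produced positive/negative words grow, and showing — using the lower bound on the $\mst$ — that the process cannot loop and always reaches a terminal diagram; here the ``sufficiently'' in \emph{sufficiently large type} is presumably exactly the hypothesis that makes this termination and confluence analysis go through (ruling out small-$\mst$ resonances analogous to those obstructing biautomaticity proofs in the general large-type case). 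Once this local confluence/termination is in hand, the passage to the global statement and hence to $\PropH$ should be routine bookkeeping with the noetherian induction.
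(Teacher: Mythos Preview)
Your proposal has a genuine gap at step~(5), and it is precisely the gap that the paper is designed to close. Proposition~1.14 shows that Property~$\PropH$ is equivalent to ``$\RRR$-reduction is semi-convergent for~$\MM$ up to padding'', i.e.\ to the existence of \emph{some} function~$\ff$ for which $\ff$-padded semi-convergence holds. It says nothing about the Turing-computability of~$\ff$, and the paper states this explicitly in the paragraph following the proof of Proposition~1.14: ``Property~$\PropH$ entails no effective upper bound on the padding that is needed.'' Conjecture~$\ConjAp$, by contrast, asks for a Turing-computable~$\ff$. Your sentence ``padding by the identity map is Turing-computable'' does not repair this: you would need to know, for a given multifraction~$\aav$ that represents~$1$, a computable bound on how many trivial entries must be prepended, and nothing in Proposition~1.14 provides one. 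So even a complete proof of Property~$\PropH$ would not, by this route alone, yield Theorem~1.

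There is also a misidentification of what Property~$\PropH$ is. It is not a local termination/cube condition on the reversing of words~$\INV{\mathtt{s}}\,\mathtt{t}$ (that is closer to ``completeness of reversing'', which holds automatically for any lcm-presentation by Lemma~1.16). Property~$\PropH$ is the global statement of Definition~1.13: every word representing~$1$ in the group can be reduced to the empty word by special transformations. Proving this requires controlling arbitrarily long words, not just triples of generators; for sufficiently large type this is exactly the main theorem of Godelle--Rees~\cite{GR}, whose proof already relies on the Holt--Rees rewrite system of~\cite{HR1,HR2}. The paper does not go through Property~$\PropH$ at all: it proves directly the quantitative Proposition~2.1, giving an explicit quadratic padding bound $\pp = 3\ell(\ell+2)/4$, by showing (Proposition~2.2) that the Holt--Rees length-reducing $\tau^+$-moves and the associated critical-sequence machinery can be simulated step by step inside $\RRR$-reduction, at the cost of a controlled amount of padding. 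It is this explicit bound, not Proposition~1.14, that delivers the Turing-computable~$\ff$ required by Conjecture~$\ConjAp$.
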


We recall from~\cite{HR2} that an Artin-Tits group is said to be of {\em sufficiently large type} if, in any triangle in the associated Coxeter diagram, either no edge has label $2$, or all three edges have label $2$, or at least one edge has label $\infty$. The result follows directly from the more precise result stated as Proposition~\ref{P:Main} below, which gives an explicit quadratic upper bound on the padding that is needed. The proof relies on a careful analysis of the techniques of~\cite{HR2} and~\cite{GR}. With this result, the family of Artin--Tits for which multifraction reduction is relevant is greatly enlarged. 

The paper is organised in two sections. The first one is devoted to padded reduction and its variants in a general context of gcd-monoids, and contains a proof of Propositions~\ref{P:PaddedSC2WP} and~\ref{P:H}. The second section is devoted to the specific case of Artin--Tits monoids of sufficiently large type, with a proof of Theorem~1.

\section{Padded multifraction reduction}\label{SSplitRed}

After recalling in Subsection~\ref{SSRed} the definitions that we need for multifraction reduction and the rewrite system~$\RD\MM$, we introduce in Subsection~\ref{SS:PaddedSC} padded versions of semi-convergence and use them to solve the word problem of the enveloping group. Next, we introduce in Subsection~\ref{SSSplitRed} a new rewrite system~$\SD\MM$, a variant of $\RD\MM$ called split reduction, and we connect its semi-convergence with the padded semi-convergence of~$\RD\MM$. Finally, we establish the connection with subword reversing and Property~$\PropH$ in Subsection~\ref{SSPropH}.

\subsection{Multifraction reduction}\label{SSRed}

If $\MM$ is a monoid, we denote by~$\EG\MM$ the enveloping group of~$\MM$, and by~$\can$ the canonical (not necessarily injective) morphism from~$\MM$ to~$\EG\MM$. We say that a finite sequence $\aav = (\aa_1 \wdots \aa_\nn)$ of elements of~$\MM$, also called a \emph{multifraction} on~$\MM$ and denoted by $\aa_1 \sdots \aa_\nn$, \emph{represents} an element~$\gg$ of~$\EG\MM$ if
\begin{equation}\label{E:AltProd}
\smash{\gg = \can(\aa_1) \can(\aa_2)\inv \can(\aa_3) \pdots \can(\aa_\nn)^{(-1)^{\nn - 1}}}
\end{equation} 
holds in~$\EG\MM$. In this context, the parameter~$\nn$ is called the \emph{depth} of~$\aav$, denoted by~$\dh\aav$, and the right hand side of~\eqref{E:AltProd} is denoted by~$\can(\aav)$. We use $\FR\MM$ for the family of all multifractions on~$\MM$, and $\One\nn$ for the depth~$\nn$ multifraction with all entries equal to $1$, skipping~$\nn$ when no ambiguity is possible. Our aim is to recognise which multifractions represent~$1$ in~$\EG\MM$. 

We collect together the definitions that we need concerning monoids.

\begin{defi}
A monoid~$\MM$ is called a \emph{gcd-monoid} if it is cancellative, $1$ is its only invertible element, and any two elements of~$\MM$ admit a left- and a right-gcd, where, for~$\aa, \bb$ in~$\MM$, we say that $\aa$ \emph{left-divides}~$\bb$, written $\aa \dive \bb$, if we have $\bb = \aa\xx$ for some~$\xx$ in~$\MM$, and that $\dd$ is a \emph{left-gcd} of~$\aa$ and~$\bb$ if $\dd$ left-divides~$\aa$ and~$\bb$ and every common left-divisor of~$\aa$ and~$\bb$ left-divides~$\dd$. Right-division and right-gcd are defined symmetrically, with $\bb = \xx\aa$ replacing $\bb = \aa\xx$.

An \emph{atom} in~$\MM$ is an element that is not expressible as $\aa\bb$ with $\aa, \bb \not=1$. A right- (\resp left-) \emph{basic element} is one that is obtained from atoms using the operation~$\under$ (\resp $/$) defined by $\aa (\aa {\under} \bb) = \aa \lcm \bb$ (\resp $(\aa / \bb)\bb = \aa \lcmt \bb$).

The monoid~$\MM$ is called \emph{noetherian} if there is no infinite descending sequence with respect to proper left- or right-division, and \emph{strongly noetherian} if there exists a map $\lambda$ from $\MM \setminus \{1\}$ to the positive integers satisfying $\lambda(\aa\bb) \ge \lambda(\aa) + \lambda(\bb)$. 
\end{defi}

Throughout the paper, we restrict our attention to gcd-monoids: Artin--Tits monoids are typical examples. In a gcd-monoid, any two elements~$\aa, \bb$ that admit a common right-multiple(\resp left-multiple) admit a least one, the \emph{right-lcm} (\resp \emph{left-lcm}) of~$\aa$ and~$\bb$, denoted by~$\aa \lcm \bb$ (\resp $\aa \lcmt \bb$).

If $\MM$ is a gcd-monoid, a family~$\RD\MM$ of rewrite rules on~$\FR\MM$ is defined in~\cite{Dit}: for $\aav, \bbv$ in~$\FR\MM$, and for $\ii \ge 1$ and $\xx \in \MM$, we write $\aav \act \Red{\ii, \xx} = \bbv$ if we have $\dh\bbv = \dh\aav$, $\bb_\kk = \aa_\kk$ for $\kk \not= \ii - 1, \ii, \ii + 1$, and there exists~$\xx'$ satisfying
$$\begin{array}{lccc}
\text{for $\ii$ even:}
&\bb_{\ii-1} = \aa_{\ii-1} \xx', 
&\xx \bb_\ii = \aa_\ii \xx' = \xx \lcm \aa_\ii, 
&\xx \bb_{\ii+1} = \aa_{\ii+1},\\
\text{for $\ii \ge 3$ odd:\qquad}
&\bb_{\ii-1} = \xx' \aa_{\ii-1}, 
&\bb_\ii \xx = \xx' \aa_\ii = \xx \lcmt \aa_\ii, 
&\bb_{\ii+1} \xx = \aa_{\ii+1},\\
\text{for $\ii = 1$:}
&&\bb_\ii \xx = \aa_\ii, 
&\bb_{\ii+1} \xx = \aa_{\ii+1}.
\end{array}$$
We say that $\aav$ \emph{reduces} to~$\bbv$ in one step, and write $\aav \rd \bbv$, if $\aav \act \Red{\ii, \xx} = \bbv$ holds for some~$\ii$ and some $\xx \not= 1$. We use $\rds$ for the reflexive--transitive closure of~$\rd$. 

By~\cite[Lemma~3.8 and Cor.~3.20]{Dit}, $\aav \rds \bbv$ implies that $\aav$ and~$\bbv$ represent the same element in~$\EG\MM$, and, conversely, the relation~$\can(\aav) = \can(\bbv)$ is essentially the equivalence relation generated by~$\rd$ (up to deleting trivial final entries). Furthermore, whenever the monoid~$\MM$ is noetherian, $\RRR$-reduction is terminating for~$\MM$. 

By~\cite[Prop.~4.16]{Dit} and~\cite[Prop.~3.2]{Diu}, $\RRR$-reduction is locally confluent for~$\MM$ if and only if $\MM$ satisfies the \emph{$3$-Ore condition}, meaning that any three elements of~$\MM$ that pairwise admit a common right- (\resp left-) multiple admit a global one. In this case, $\RRR$-reduction is convergent, implying that a multifraction~$\aav$ represents~$1$ in~$\EG\MM$ if and only if $\aav \rds \one$ is satisfied. In the case of Artin--Tits monoids, this exactly corresponds to type~FC.

When $\MM$ does not satisfy the $3$-Ore condition, $\RRR$-reduction is not confluent for~$\MM$, and no convergence result can be expected. However, a condition weaker than convergence can be used to solve the word problem:

\begin{defi}[\cite{Diu}]
We say that $\RRR$-reduction is \emph{semi-convergent for~$\MM$} if
$\aav \rds\nobreak \one$ holds for every multifraction~$\aav$ that
represents~$1$ in~$\EG\MM$. 
\end{defi} 

Convergence implies semi-convergence, but explicit examples of gcd-monoids~$\MM$ for which $\RRR$-reduction is semi-convergent but not convergent are known~\cite{Div}. However:

\begin{prop}[{\cite[Prop.~3.16]{Diu}}]\label{P:WP}
 If $\MM$ is a strongly noetherian gcd-monoid with finitely many basic elements for which $\RRR$-reduction is semi-convergent, then the word problem of~$\EG\MM$ is decidable.
\end{prop}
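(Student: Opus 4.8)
The plan is to convert the hypothesis of semi-convergence into a terminating, finitely-branching search, and the first step is to record the exact reachability problem that the word problem reduces to. Since $\aav \rds \bbv$ implies $\can(\aav) = \can(\bbv)$ (cited above), any multifraction with $\aav \rds \One{\dh\aav}$ automatically represents~$1$ in~$\EG\MM$; conversely, semi-convergence says precisely that every~$\aav$ representing~$1$ satisfies $\aav \rds \one$, and since $\rd$ preserves depth the target is $\One{\dh\aav}$. Hence, under the hypothesis, $\aav$ represents~$1$ in~$\EG\MM$ if and only if $\aav \rds \One{\dh\aav}$. Reading an input word in the generators of~$\EG\MM$ as a multifraction~$\aav$, the word problem becomes the reachability question ``does $\aav$ reduce to $\One{\dh\aav}$?''.

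Next I would show this question is effectively solvable, which is where both finiteness hypotheses enter. Since atoms are themselves basic elements (the base case of the construction from atoms using~$\under$), ``finitely many basic elements'' forces~$\MM$ to have finitely many atoms; together with strong noetherianity, whereby $\cc = \xx\yy$ gives $\lambda(\cc) \ge \lambda(\xx) + \lambda(\yy)$ so that every factor of~$\cc$ is a product of at most $\lambda(\cc)$ atoms, this makes~$\MM$ locally finite: for each~$\nn$ the set of elements of $\lambda$-value at most~$\nn$ is finite and, the presentation being finite, computable, and (computing left- and right-lcms via subword reversing) equality, left- and right-divisibility, and left- and right-lcm are all decidable. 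In particular every element~$\cc$ has finitely many, computable, left-divisors and right-divisors. Reading off the rewrite rules~$\Red{\ii,\xx}$, a one-step reduct of~$\aav$ is determined by the pair $(\ii,\xx)$ with $\xx \not= 1$, where $\ii$ ranges over $\{1 \wdots \dh\aav - 1\}$ and, by cancellativity, $\xx$ is forced to divide a neighbouring entry: for $\ii$ even, $\xx$ left-divides~$\aa_{\ii+1}$; for $\ii \ge 3$ odd, $\xx$ right-divides~$\aa_{\ii+1}$; and for $\ii = 1$, $\xx$ is a common right-divisor of~$\aa_1$ and~$\aa_2$. Each such~$\xx$ ranges over a finite computable set and determines~$\bbv$ uniquely through the stated lcm equations, so~$\rd$ is finitely branching and the set of one-step reducts of any~$\aav$ is finite and computable.

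Finally I would combine effectiveness with termination. Because~$\MM$ is noetherian, $\RRR$-reduction is terminating (cited above), so there is no infinite $\rd$-sequence issuing from~$\aav$; together with finite branching, König's lemma shows that the reachable set $\{\bbv : \aav \rds \bbv\}$ is finite. A breadth-first exploration of~$\rd$ from~$\aav$, maintaining the set of already-visited multifractions, therefore enumerates this finite set in finitely many steps and halts; the algorithm answers ``yes'' if $\One{\dh\aav}$ is encountered and ``no'' otherwise. By the equivalence of the first paragraph this decides whether~$\aav$ represents~$1$ in~$\EG\MM$, proving the proposition.

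I expect the genuine content to lie in the middle step: extracting from the two finiteness hypotheses that~$\rd$ is both effectively computable and finitely branching. Concretely, the work is in observing that finitely many basic elements bounds the number of atoms, that strong noetherianity then yields local finiteness together with decidable divisibility and lcm, and hence that the divisor sets indexing the admissible reduction steps are finite and can be listed. The opening semantic equivalence and the closing termination--König packaging are comparatively routine once this effectiveness is established.
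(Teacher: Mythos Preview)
Your proposal is correct and matches the paper's approach. Note that the paper does not actually prove Proposition~\ref{P:WP} here---it is cited from~\cite{Diu}---but the proof given for the padded variant, Proposition~\ref{P:PaddedSC2WP}, is explicitly stated to be ``the same as the one in~\cite{Diu} for Proposition~\ref{P:WP}'': convert the input word to a multifraction, exhaustively enumerate all its $\RRR$-reducts (possible because the finiteness hypotheses make common multiples and hence~$\rds$ decidable, and noetherianity makes~$\rd$ terminating), and accept iff a trivial multifraction appears. Your breadth-first search with the König argument is exactly this exhaustive enumeration spelled out, and your middle paragraph supplies the effectiveness details (finitely many atoms, local finiteness, decidable divisibility and lcm, finite divisor sets indexing the admissible~$\Red{\ii,\xx}$) that the paper compresses into the clause ``under the assumptions of the statement, the existence of common multiples and, from there, the relation~$\rds$, are decidable.''
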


\begin{conjA}[\cite{Diu}]
$\RRR$-reduction is semi-convergent for every Artin--Tits monoid.
\end{conjA}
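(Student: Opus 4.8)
The plan is to route the (unpadded) Conjecture~$\ConjA$ through the equivalences of Proposition~\ref{P:H}: first establish Property~$\PropH$ for the standard lcm-presentation $(\SS,\RR)$ of \emph{every} Artin--Tits monoid, which by that proposition yields semi-convergence of $\SSS$-reduction and of $\RRR$-reduction \emph{up to padding}; then show that for Artin--Tits monoids the padding is inessential, upgrading the padded statement to plain semi-convergence. The low-depth base cases are immediate from the fact that every Artin--Tits monoid embeds in its enveloping group, so $\can$ is injective: a depth-$1$ multifraction represents~$1$ only if it equals~$\One1$, and a depth-$2$ one only if it is of the form $\aa\sdots\aa$, which reduces to~$\One2$ in a single $\Red{1,\aa}$-step. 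The work is entirely at depth~$\ge3$.

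The central step is Property~$\PropH$. Since $\PropH$ is a completeness statement for subword reversing relative to the braid-type relations $\underbrace{\ss\tt\ss\pdots}_{\mst}=\underbrace{\tt\ss\tt\pdots}_{\mst}$, I would attack it by a direct combinatorial analysis of reversing grids, seeking an argument valid for an arbitrary Coxeter matrix. The two families already under control---type~FC via the $3$-Ore condition of~\cite{Dit}, and sufficiently large type via Theorem~1 of the present paper---serve as templates, the aim being to discard the ``sufficiently large'' hypothesis on triangles. The main sub-obstacle here is the failure of local confluence whenever the $3$-Ore condition fails: one cannot invoke a diamond lemma, so the reversing analysis must be conducted globally rather than by resolving one critical pair at a time, and the genuinely new phenomena are of rank~$3$, namely triples of generators admitting pairwise but no global common multiple.

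The final and hardest step is padding elimination. Semi-convergence up to $\ff$-padding only guarantees that, after prepending some bounded number of trivial leading components, $\aav$ reduces to~$\one$, whereas Conjecture~$\ConjA$ demands this with no prepended components. I would aim to prove, for Artin--Tits monoids, a simulation lemma asserting that any reduction to~$\one$ using leading trivial entries can be replayed without them, by an induction on the depth~$\dh\aav$ combined with the strong-noetherian norm~$\wl$, pushing each spurious leading~$\One1$ through the reduction and showing it is never actually consumed. This step is exactly where the present paper halts---it obtains only the padded Conjecture~$\ConjAp$---so I expect it to be the principal obstacle, quite possibly demanding an ingredient beyond subword reversing, such as a finer invariant separating the multifractions that reduce to~$\one$ from those that do so only after padding.
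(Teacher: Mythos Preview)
The statement you are attempting to prove is not a theorem of the paper: it is Conjecture~$\ConjA$, explicitly presented as open. The paper contains no proof of it, so there is nothing to compare against. What the paper establishes is the strictly weaker Conjecture~$\ConjAp$ (semi-convergence up to computable padding), and only for the sufficiently-large-type subfamily, via the explicit bound of Proposition~\ref{P:Main}; the closing paragraph states that even Conjecture~$\ConjA$ for sufficiently large type remains open.

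Your proposal is accordingly a research programme, not a proof, and both of its principal steps are themselves open problems. First, Property~$\PropH$ is currently known only for Artin--Tits presentations of type~FC~\cite{Dib} and of sufficiently large type~\cite{GR}; the ``direct combinatorial analysis of reversing grids valid for an arbitrary Coxeter matrix'' that you propose is precisely the missing result, not something that can be carried out with existing techniques. The rank-$3$ configurations you single out are exactly those no one has handled in general. Second, even granting Property~$\PropH$---equivalently, by Proposition~\ref{P:H}, padded semi-convergence---for all Artin--Tits monoids, your padding-elimination step is again open. No simulation lemma of the kind you describe is known, and there is no evidence that reductions using leading trivial entries can always be replayed without them; the entire motivation for introducing Conjecture~$\ConjAp$ is that the gap between it and~$\ConjA$ appears to be genuine. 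You yourself flag this as ``quite possibly demanding an ingredient beyond subword reversing'', which is an accurate assessment but also an admission that no proof is being offered.
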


Conjecture~$\ConjA$ is true for every Artin--Tits monoid of FC~type, but other cases remain open. Every Artin--Tits monoid satisfies the finiteness assumptions of Proposition~\ref{P:WP} and, therefore, the word problem of the enveloping group of every Artin--Tits monoid satisfying Conjecture~$\ConjA$ is decidable.

\subsection{Padded semi-convergence}\label{SS:PaddedSC}

We now introduce a weakening of semi-conver\-gence, in which trivial initial entries may be added to multifractions. For $\pp$ a nonnegative integer and $\aav$ a multifraction, we write $\One{2\pp} / \aav$ for the multifraction obtained from~$\aav$ by adding $2\pp$~trivial entries on the left of~$\aav$. Note that the number of~$1$s that are added must be even in order to preserve the image of the multifraction in the group.

\begin{defi}\label{D:InflSC}
If $\ff$ is a map from~$\FR\MM$ to~$\NNNN$, we say that $\RRR$-reduction is semi-convergent for~$\MM$ \emph{up to $\ff$-padding} if $\One{2\ff(\aav)} / \aav \rds\nobreak \one$ holds for every
multifraction~$\aav$ that represents~$1$ in~$\EG\MM$. 
\end{defi}

By definition, semi-convergence is semi-convergence up to $0$-padding. Because $\aav \rds\nobreak \bbv$ trivially implies $\One{2\pp} /\aav \rds \One{2\pp} / \bbv$ for every~$\pp$, semi-convergence of $\RRR$-reduction up to $\ff$-padding implies its semi-convergence up to $\ff'$-padding for~$\ff' \ge \ff$ (meaning $\forall \aav {\in} \FR\MM\,(\ff'(\aav) \ge \ff(\aav))$): appending trivial initial entries preserves the existing reductions and possibly adds new ones.

\begin{prop}\label{P:PaddedSC2WP}
If $\MM$ is a strongly noetherian gcd-monoid with finitely many basic elements for which $\RRR$-reduction is semi-convergent up to $\ff$-padding for some Turing-computable map~$\ff$ on~$\FR\MM$, then the word problem of~$\EG\MM$ is decidable.
\end{prop}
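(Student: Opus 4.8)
The plan is to adapt the proof of Proposition~\ref{P:WP} (namely \cite[Prop.~3.16]{Diu}), replacing the decision procedure ``run $\RRR$-reduction and check whether $\aav \rds \one$'' by ``for a suitable supply of values of~$\pp$, run $\RRR$-reduction starting from $\One{2\pp} / \aav$ and check whether the result is~$\one$''. Recall that the word problem for~$\EG\MM$ amounts, given a word~$w$ over the atoms and their inverses, to deciding whether the multifraction~$\aav_w$ it naturally determines represents~$1$; converting $w$ to~$\aav_w$ is plainly effective. So it suffices to describe a semi-algorithm for each of the two answers and run them in parallel.

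The positive side is immediate from the hypothesis. If $\aav_w$ represents~$1$, then by semi-convergence up to $\ff$-padding we have $\One{2\ff(\aav_w)} / \aav_w \rds \one$. Here is where the three hypotheses enter: since $\MM$ is strongly noetherian and $\ff$ is Turing-computable, we can compute the single integer $\pp_0 = \ff(\aav_w)$, form $\One{2\pp_0} / \aav_w$, and then enumerate \emph{all} reduction sequences issuing from it. Because $\MM$ is noetherian (a consequence of strong noetherianity), $\RRR$-reduction terminates, so this tree of reductions is finite; because $\MM$ is strongly noetherian with finitely many basic elements, at each multifraction there are only finitely many applicable rules $\Red{\ii,\xx}$ with $\xx \neq 1$ (the relevant $\xx$ range over divisors built from basic elements, of bounded $\wl$-value), so the tree is finitely branching and hence, by K\H{o}nig's lemma, finite. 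We search this finite tree; if some branch terminates at~$\one$ we output ``trivial''. By the hypothesis this search succeeds whenever $\aav_w$ represents~$1$.

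For the negative side we use the soundness direction recalled before Definition~2.6: $\aav \rds \bbv$ implies $\can(\aav) = \can(\bbv)$. Thus, for \emph{any} value of~$\pp$ and any reduction sequence, $\One{2\pp} / \aav_w \rds \one$ would force $\can(\aav_w) = \can(\One{2\pp}/\aav_w) = 1$. So we may safely run the following enumeration in parallel with the positive search: for $\pp = 0, 1, 2, \dots$, build the (finite, by the same termination and finite-branching argument) reduction tree of $\One{2\pp}/\aav_w$; this never reaches~$\one$ when $\aav_w$ does not represent~$1$. To obtain a \emph{decision} procedure we simply interleave: run the positive search, which halts with answer ``trivial'' precisely when $\aav_w$ represents~$1$; and in parallel use the standard trick that for the word problem it is enough to have a semi-decision procedure for one side together with a semi-decision procedure for the other, the latter being ``enumerate words known to equal~$1$ and those known to be distinct''—or, more directly, observe that since we have computed $\pp_0 = \ff(\aav_w)$ up front, the \emph{single} tree for $\One{2\pp_0}/\aav_w$ already decides the question: if it contains~$\one$, output ``trivial''; otherwise, by semi-convergence up to $\ff$-padding, $\aav_w$ does not represent~$1$, so output ``non-trivial''. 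Hence one finite computation suffices and the word problem is decidable.

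The only genuinely delicate point, and the one I would write out carefully, is the finiteness of the reduction tree, i.e.\ that $\RRR$-reduction starting from a fixed multifraction over such an~$\MM$ admits only finitely many reduction sequences. Termination (no infinite branch) is already quoted from \cite{Dit} via noetherianity. Finite branching is where ``finitely many basic elements'' is used: in a rule $\aav \act \Red{\ii,\xx}$ the element~$\xx$ must satisfy $\xx \lcm \aa_\ii$ (or $\xx \lcmt \aa_\ii$) existing and equal to $\aa_\ii \xx'$, which constrains $\xx$ to lie in a finite set determined by~$\aa_\ii$ and the basic elements—this is exactly the finiteness observation underlying Proposition~\ref{P:WP}, and I would invoke it in the same form rather than reprove it. Given finite branching plus termination, K\H{o}nig's lemma yields a finite tree, the tree is effectively constructible since gcds, lcms and the predicate $\xx \neq 1$ are computable in a gcd-monoid with finitely many basic elements, and the argument closes. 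I expect no new obstacle beyond bookkeeping: the whole proof is Proposition~\ref{P:WP} with the one extra ingredient that we first compute $\ff(\aav_w)$ and then pad.
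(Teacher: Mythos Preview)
Your proposal is correct and takes essentially the same approach as the paper: compute $\pp_0 = \ff(\aav_w)$, pad, exhaustively enumerate the (finite) set of multifractions reachable from $\One{2\pp_0}/\aav_w$ by $\RRR$-reduction, and check for~$\one$; the finiteness and effectiveness of this enumeration are inherited verbatim from the proof of Proposition~\ref{P:WP}. Your initial discussion of parallel semi-algorithms for the two sides is an unnecessary detour that you yourself abandon once you observe that the single tree rooted at $\One{2\pp_0}/\aav_w$ already decides the question --- the paper goes straight to that observation.
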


Speaking of a Turing-computable map defined on~$\FR\MM$ makes sense because, under the assumptions of Proposition~\ref{P:PaddedSC2WP}, $\MM$ admits a finite presentation in terms of the finite atom family~$\SS$ of~$\MM$, and every element of the monoid~$\MM$ is represented by finitely many words in~$\SS$ only, which implies the decidability of the word problem of~$\MM$ with respect to~$\SS$. Then a Turing-computable map on~$\MM$ means one that is induced by a Turing-computable map on words in~$\SS$.

If $(\SS, \RR)$ is a monoid presentation, that is, $\RR$ is a family of pairs of words in~$\SS$, we denote by~$\equivp_\RR$, or simply~$\equivp$, the congruence on~$\SS^*$ generated by~$\RR$, implying $\MON\SS\RR = \SS^*{/}{\equivp}$. For~$\ww$ a word in~$\SS$, we denote by~$\clp\ww$ the element of~$\MON\SS\RR$ represented by~$\ww$, that is, the $\equivp$-class of~$\ww$. Let~$\SSb$ be a disjoint copy of~$\SS$ that consists of a new element~$\INV\ss$ for each element~$\ss$ of~$\SS$. For~$\ww$ a word in $\SS \cup \SSb$, we denote by~$\INV\ww$ the word obtained from~$\ww$ by exchanging~$\ss$ and~$\INV\ss$ everywhere and reversing the order of letters. In this context, the element of~$\SS$ (\resp $\SSb$) are called positive (\resp negative), and we denote by~$\cl\ww$ the element of the group~$\GR\SS\RR$ represented by~$\ww$ when $\INV\ss$ represents~$\ss\inv$.

\begin{proof}[Proof of Proposition~\ref{P:PaddedSC2WP}]
The argument is the same as the one in~\cite{Diu} for Proposition~\ref{P:WP}. Let $\SS$ be the atom set of~$\MM$, and let $\ww$ be a word in $\SS \cup \SSb$. To decide whether $\ww$ represents~$1$ in~$\EG\MM$, we first express it as $\ww_1 \ww_2\inv \ww_3 \pdots$ with $\ww_1, \ww_2$, ... in~$\SS^*$ (no negative letters). Then, writing $\underline\ww = (\ww_1, \ww_2, ...)$ and $\clp{\underline\ww}$ for $\clp{\ww_1} / \clp{\ww_2} / \pdots$, we append~2$\ff(\clp{\underline\ww})$ trivial entries on the left, and exhaustively enumerate all sequences~$\underline{\ww}'$ satisfying $\One{2\ff(\clp{\underline\ww})} / \clp{\underline{\ww}} \rds \clp{\underline{\ww}'}$: this is possible because, under the assumptions of the statement, the existence of common multiples and, from there, the relation~$\rds$, are decidable. Then $\ww$ represents~$1$ in~$\EG\MM$ if and only if a trivial sequence $\ew \sdots \ew$ appears in the list, where $\ew$ is the empty word.
\end{proof}

We are thus led to the new conjecture, Conjecture~$\ConjAp$, stated in the introduction. Conjecture~$\ConjAp$ is \textit{a priori} weaker than Conjecture~$\ConjA$ but, by Proposition~\ref{P:PaddedSC2WP}, it nevertheless implies the decidability of the word problem of every Artin--Tits group for which it is true.

\subsection{Split reduction}\label{SSSplitRed}

We now connect the padded version of semi-conver\-gence of $\RRR$-reduction with the ordinary semi-convergence of a new rewrite system, a variant of $\RRR$-reduction that can naturally be called \emph{split reduction}. 

In the definition of multifraction reduction used in \cite{Dit,Diu,Div} and repeated in Section~\ref{SS:PaddedSC}, in order for $\aav \act \Red{\ii, \xx}$ to be defined, $\xx$ and~$\aa_\ii$ have to admit a common multiple, a left- or right-multiple depending on the parity of~$\ii$. In order to define a variant of this reduction, we now relax the assumption and only require that $\xx$ and some divisor of~$\aa_\ii$ admit a common multiple; then, we again remove~$\xx$ from~$\aa_{\ii + 1}$ and push it to the left or right side using the lcm operation, this time through the divisor of~$\aa_\ii$ that is involved and not necessarily through the whole of~$\aa_\ii$.

\begin{defi}
(See Figure~\ref{F:Sharp} right.) Assume that $\MM$ is a gcd-monoid. For $\aav, \bbv$ in~$\FR\MM$, and for $\ii \ge 1$ even (\resp odd) and $\xx, \yy$ in~$\MM$ such that $\xx \lcm \yy$ (\resp $\xx \lcmt \yy$) exists, we write $\aav \act \Redbrk{\ii, \xx, \yy} = \bbv$ if we have $\dh\bbv = \dh\aav + 2$, $\bb_\kk = \aa_\kk$ for $\kk < \ii$, $\bb_\kk = \aa_{\kk - 2}$ for $\kk > \ii + 3$, and
$$\begin{array}{lccc}
\text{for $\ii$ even:}
&\yy \bb_\ii= \aa_\ii, 
&\yy \bb_{\ii + 1} = \xx \bb_{\ii + 2} = \xx \lcm \yy, 
&\xx \bb_{\ii+3} = \aa_{\ii+1},\\
\text{for $\ii$ odd:\qquad}
&\bb_\ii \yy= \aa_\ii, 
&\bb_{\ii + 1} \yy = \bb_{\ii + 2} \xx = \xx \lcmt \yy, 
&\bb_{\ii+3} \xx = \aa_{\ii+1}.
\end{array}$$
In addition we define ``trimming'' rules $\Redtrim\ii$, such that $\aav \act \Redtrim\ii = \bbv$ holds if we have $\aa_{\ii + 1} = 1$, $\dh\bbv = \dh\aav - 2$, and $\bb_\kk = \aa_\kk$ for $\kk < \ii$, $\bb_\kk = \aa_{\kk + 2}$ for $\kk > \ii + 1$, and $\bb_\ii = \aa_\ii \aa_{\ii + 2}$ (\resp $\aa_{\ii + 2} \aa_\ii$) if $\ii$ is odd (\resp even). 
We write $\aav \rdsp \bbv$ if either $\aav \act \Redbrk{\ii,\xx,\yy} = \bbv$ holds for some~$\ii, \xx, \yy$ with $\xx\yy \not= 1$ or $\aav \act \Redtrim\ii = \bbv$ holds for some $\ii$, and denote by $\rdsp^*$ the reflexive--transitive closure of~$\rdsp$. The rewrite system~$\SD\MM$ so obtained is called \emph{split reduction} on~$\MM$.
\end{defi}

\begin{figure}[htb]
\begin{center}
\begin{picture}(50,29)(0,1)
\psset{nodesep=0.7mm}
\psline[style=back,linecolor=color2]{c-c}(0,0.5)(9.5,0.5)(9.5,20)(39.5,20)(39.5,30.5)(50,30.5)
\psline[style=back,linecolor=color1]{c-c}(0,0)(10,0)(10,10)(40,10)(40,30)(50,30)
\put(-5,0){...}
\pcline{->}(0,0)(10,0)
\pcline{<-}(10,0)(10,10)\trput{$\aa_{\ii - 1}$}
\pcline{<-}(10,10)(10,20)\trput{$\xx'$}
\psline[style=thin](9,1)(8,1)(8,20)(9,20)\put(1,10){$\bb_{\ii - 1}$}
\pcline{->}(10,10)(40,10)\tbput{$\aa_\ii$}
\pcline{->}(10,20)(40,20)\taput{$\bb_\ii$}
\psarc[style=thin](10,20){3}{270}{360}
\pcline[linewidth=1.5pt,linecolor=color3,arrowsize=1.5mm]{<-}(40,10)(40,20)\tlput{$\xx$}
\pcline{<-}(40,20)(40,30)\tlput{$\bb_{\ii + 1}$}
\pcline{->}(40,30)(50,30)
\put(52,30){...}
\psline[style=thin](41,10)(42,10)(42,29)(41,29)\put(43,20){$\aa_{\ii + 1}$}
\put(23,14){$\Leftarrow$}
\end{picture}
\hspace{5mm}
\begin{picture}(50,29)(0,1)
\psset{nodesep=0.7mm}
\psline[style=back,linecolor=color2]{c-c}(0,0.5)(9.5,0.5)(9.5,10.5)(25,10.5)(25,20)(39.5,20)(39.5,30.5)(50,30.5)
\psline[style=back,linecolor=color1]{c-c}(0,0)(10,0)(10,10)(40,10)(40,30)(50,30)
\put(-5,0){...}
\pcline{->}(0,0)(10,0)
\pcline{<-}(10,0)(10,10)\trput{$\aa_{\ii - 1}$}\tlput{$\bb_{\ii - 1}$}
\pcline{<-}(25,10)(25,20)\put(18,15.5){$\bb_{\ii + 1}$}
\pcline{->}(10,10)(25,10)\put(15,12){$\bb_\ii$}
\pcline[linewidth=1.5pt,linecolor=color3,arrowsize=1.5mm]{->}(25,10)(40,10)\put(31.5,11.5){$\yy$}
\psline[style=thin](11,9)(11,8)(40,8)(40,9)\put(23,5.5){$\aa_\ii$}
\pcline{->}(25,20)(40,20)\taput{$\bb_{\ii+2}$}
\psarc[style=thin](25,20){3}{270}{360}
\pcline[linewidth=1.5pt,linecolor=color3,arrowsize=1.5mm]{<-}(40,10)(40,20)\tlput{$\xx$}
\pcline{<-}(40,20)(40,30)\put(33,25.5){$\bb_{\ii + 3}$}
\pcline{->}(40,30)(50,30)
\put(52,30){...}
\psline[style=thin](41,10)(42,10)(42,29)(41,29)\put(43,20){$\aa_{\ii + 1}$}
\put(31,15){$\Leftarrow$}
\put(28,14){$\mathrm{\scriptscriptstyle part}$}
\end{picture}
\end{center}
\caption{\small Comparing the reduction~$\Red{\ii, \xx}$ (left) and the split reduction~$\Redbrk{\ii, \xx, \yy}$ (right), here for $\ii$ odd: in both cases, we divide~$\aa_{\ii + 1}$ by~$\xx$ and push~$\xx$ to the left using an lcm, but, for~$\Red{\ii, \xx}$, we demand that $\xx$ crosses the whole of~$\aa_\ii$, whereas, for~$\Redbrk{\ii, \xx, \yy}$, we only require that $\xx$ crosses the possibly proper divisor~$\yy$ of~$\aa_\ii$; as a consequence, we cannot gather the elements at level~$\ii - 1$ and the depth increases by~$2$.}
\label{F:Sharp}
\end{figure}
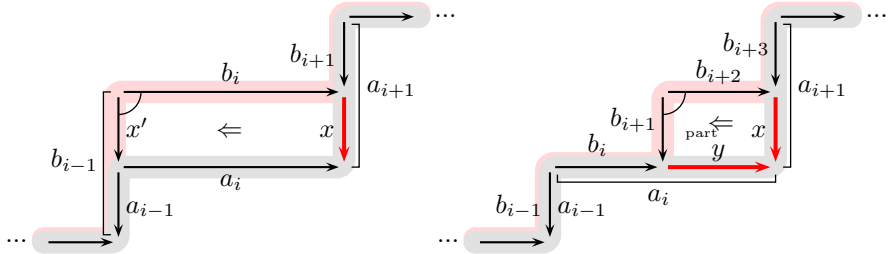

Adding the trimming rules~$\Redtrim\ii$ is a technical artefact that will make statements simpler. It directly follows from the definition that, if $\aav \rdsp^* \bbv$ is true, then $\aav$ and~$\bbv$ represent the same element in the group~$\EG\MM$. A major difference between $\RRR$- and $\SSS$-reductions is that, even if the monoid~$\MM$ is noetherian, no termination can be expected for the latter in general.

\begin{exam}
Let $\MM$ be the Artin--Tits monoid of type~$\Att$, that is, 
$$\MON{\tta, \ttb, \ttc}{\tta\ttb\tta = \ttb\tta\ttb, \ttb\ttc\ttb = \ttc\ttb\ttc, \ttc\tta\ttc = \tta\ttc\tta},$$
and let $\aav = \tta\ttb / \ttc$. Then we find $\aav \act \Redbrk{1, \ttc, \ttb} \Redbrk{1, \ttc, \tta} = 1 / \tta\ttc / \ttc\tta / \ttb / \ttc\ttb / 1$, whence
$$\aav \act \Redbrk{1, \ttc, \ttb} \Redbrk{1, \ttc, \tta} \Redbrk{3, \ttb, \tta} \Redbrk{3, \ttb, \ttc} \Redbrk{5, \tta, \ttc} \Redbrk{5, \tta, \ttb} = 1 / \tta\ttc / 1 / \ttc\ttb / 1 / \ttb\tta / \tta\ttb / \ttc/ \tta\ttc / 1 / \ttb\tta / 1 / \ttc\ttb / 1,$$
that is, with obvious notation, $\aav \rdsp^6 1 / \tta\ttc / 1 / \ttc\ttb / 1 / \ttb\tta / \aav/ \tta\ttc / 1 / \ttb\tta / 1 / \ttc\ttb / 1$, leading to an infinite sequence of split reductions from~$\aav$.
\end{exam}

The following result directly follows from the definition; we state it both and as a useful example for practice and for future reference. 

\begin{lemm} \label{lem:move_to_left}
If $\MM$ is a gcd-monoid, and $\aav = \pdots/ \aa / 1 / \bb \cc / \pdots $ is a multifraction on~$\MM$ with elements $\aa$, $1$, and~$\bb \cc $ in positions $i-1$, $i$, and $i+1$, then we have
$$\begin{array}{cccl}
\aav \act \Red{\ii , \bb} &=\pdots / \aa / 1 / \bb\cc / \pdots\act \Red{\ii , \bb} &=\pdots/\aa \bb /1/\cc /\pdots &\text{for $i$ even,}\\
\aav \act \Red{\ii , \cc} &=\pdots / \aa / 1 / \bb\cc / \pdots\act \Red{\ii , \cc} &=\pdots /\cc \aa /1/\bb /\pdots &\text{for $i$ odd}.
\end{array}$$
In particular, for $\aa =1$, we find
$$\begin{array}{cccl}
\aav \act \Red{\ii , \bb} &=\pdots / 1 / 1 / \bb\cc / \pdots\act \Red{\ii , \bb} &=\pdots/ \bb /1/\cc /\pdots &\text{for $i$ even,}\\
\aav \act \Red{\ii , \cc} &=\pdots / 1 / 1 / \bb\cc / \pdots\act \Red{\ii , \cc} &=\pdots / \cc /1/\bb /\pdots &\text{for $i$ odd}.
\end{array}$$
\end{lemm}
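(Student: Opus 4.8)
The statement to prove is Lemma~\ref{lem:move_to_left}, which asserts that in a gcd-monoid, applying $\Red{\ii,\bb}$ (for $\ii$ even) or $\Red{\ii,\cc}$ (for $\ii$ odd) to a multifraction of the shape $\pdots/\aa/1/\bb\cc/\pdots$ produces the displayed result. This is an immediate unravelling of the definition of $\Red{\ii,\xx}$, and the plan is simply to verify that the data appearing in the definition are consistent with the claimed output.

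\begin{proof}[Proof of Lemma~\ref{lem:move_to_left}]
We treat the case $\ii$ even; the case $\ii$ odd is symmetric, obtained by replacing left-division and the right-lcm by their left counterparts throughout. By definition, $\aav \act \Red{\ii, \bb} = \bbv$ holds provided $\dh\bbv = \dh\aav$, $\bb_\kk = \aa_\kk$ for $\kk \neq \ii - 1, \ii, \ii + 1$, and there exists $\xx'$ with $\bb_{\ii-1} = \aa_{\ii-1}\xx'$, $\bb\,\bb_\ii = \aa_\ii\xx' = \bb \lcm \aa_\ii$, and $\bb\,\bb_{\ii+1} = \aa_{\ii+1}$. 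Here $\aa_{\ii-1} = \aa$, $\aa_\ii = 1$, and $\aa_{\ii+1} = \bb\cc$. Since $\aa_\ii = 1$, the right-lcm $\bb \lcm \aa_\ii = \bb \lcm 1 = \bb$ certainly exists, so $\Red{\ii,\bb}$ is defined on~$\aav$; taking $\xx' = \bb$ we get $\aa_\ii \xx' = \bb = \bb \lcm \aa_\ii$, hence the middle equation forces $\bb_\ii = 1$ by left-cancellativity. The first equation then gives $\bb_{\ii-1} = \aa_{\ii-1}\xx' = \aa\bb$, and the last gives $\bb\,\bb_{\ii+1} = \bb\cc$, whence $\bb_{\ii+1} = \cc$ again by left-cancellativity. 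All other entries are unchanged, so $\bbv = \pdots/\aa\bb/1/\cc/\pdots$, as claimed. Since $\bb \neq 1$ is not required for $\Red{\ii,\bb}$ to be defined but \emph{is} required for the reduction step $\aav \rd \bbv$ to count; in any case the equality of multifractions $\aav \act \Red{\ii,\bb} = \pdots/\aa\bb/1/\cc/\pdots$ holds whenever $\bb$ is defined, and the special case $\aa = 1$ is immediate. The case $\ii$ odd follows identically, using $\bb_{\ii-1} = \xx'\aa_{\ii-1}$, $\bb_\ii\xx = \xx'\aa_\ii = \xx \lcmt \aa_\ii$, $\bb_{\ii+1}\xx = \aa_{\ii+1}$ with $\xx = \cc$ and $\xx' = \cc$: from $\aa_\ii = 1$ we get $\bb_\ii = 1$, then $\bb_{\ii-1} = \cc\aa$ and $\bb_{\ii+1} = \bb$ by right-cancellativity.
\end{proof}

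The proof is entirely routine; the only thing requiring the slightest care is checking that the relevant lcm exists (which is trivial, since one of the two elements is~$1$) so that the reduction rule is genuinely applicable, and keeping the left/right conventions straight between the even and odd cases. There is no real obstacle.
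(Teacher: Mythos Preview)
Your proof is correct and takes essentially the same approach as the paper: a direct verification of the defining conditions for $\Red{\ii,\xx}$, using that $\xx \lcm 1 = \xx$ (resp.\ $\xx \lcmt 1 = \xx$) forces $\xx' = \xx$ and $\bb_\ii = 1$. The paper merely records this verification as a pair of commutative diagrams rather than spelling out the equations, but the content is identical.
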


\begin{proof}
The diagrams provide a proof.
\begin{center}
\begin{picture}(45,7)(0,7)
\psset{nodesep=0.7mm}
\pcline{->}(0,6)(15,0)\tbput{$\aa\bb $}
\pcline{<-}(15,0)(30,0)\tbput{$1$}
\pcline{->}(30,0)(45,6)\tbput{$\cc $}
\pcline{->}(0,6)(15,12)\taput{$\aa$}
\pcline{<-}(15,12)(30,12)\taput{$1$}
\pcline{->}(30,12)(45,6)\taput{$\bb \cc $}
\pcline{->}(15,12)(15,0)\tlput{$\bb $}
\pcline{->}(30,12)(30,0)\trput{$\bb $}
\psarc[style=thin](15,0){3}{0}{90}
\end{picture}
\HS{10}
\begin{picture}(45,7)(0,7)
\psset{nodesep=0.7mm}
\pcline{<-}(0,6)(15,0)\tbput{$\cc\aa$}
\pcline{->}(15,0)(30,0)\tbput{$1$}
\pcline{<-}(30,0)(45,6)\tbput{$\bb $}
\pcline{<-}(0,6)(15,12)\taput{$\aa$}
\pcline{->}(15,12)(30,12)\taput{$1$}
\pcline{<-}(30,12)(45,6)\taput{$\bb \cc $}
\pcline{<-}(15,12)(15,0)\tlput{$\cc $}
\pcline{<-}(30,12)(30,0)\trput{$\cc $}
\psarc[style=thin](15,0){3}{0}{90}
\end{picture}
\vspace{4mm}
\end{center}
\end{proof}

\begin{lemm}\label{L:Part}
Assume that $\MM$ is a gcd-monoid and $\aav, \bbv$ belong to~$\FR\MM$. \\
\null\HS{3}\ITEM1 If $\aav \rds \bbv$ holds, then so does $\aav \rdsp^* \bbv$.\\
\null\HS{3}\ITEM2 If $\aav \rdsp^* \bbv$ holds, then $\One{2\pp} / \aav \rds \bbv / \One{2\qq}$ holds for some~$\pp$ and~$\qq$.
\end{lemm}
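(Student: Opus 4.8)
The plan is to prove the two directions by converting a single rewrite step of one system into a sequence of steps of the other, then invoke induction on the number of steps.

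For part~(i), I would first handle a single step $\aav \act \Red{\ii, \xx} = \bbv$ with $\xx \neq 1$. The idea is that an $\RRR$-step is a ``degenerate'' split step in which the pushed element $\xx$ crosses \emph{all} of $\aa_\ii$, i.e.\ the divisor $\yy$ of $\aa_\ii$ appearing in the split rule is $\aa_\ii$ itself. Concretely, looking at the definition of $\Redbrk{\ii,\xx,\yy}$ with $\yy = \aa_\ii$: for $\ii$ odd we get $\bb_\ii \aa_\ii = \aa_\ii$ (so $\bb_\ii = 1$), $\bb_{\ii+1}\aa_\ii = \bb_{\ii+2}\xx = \xx \lcmt \aa_\ii$, $\bb_{\ii+3}\xx = \aa_{\ii+1}$, and the entry at position $\ii-1$ is left untouched. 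Comparing with the $\RRR$-rule, the result of $\Redbrk{\ii,\aa_\ii,\xx}$ is the same multifraction as $\Red{\ii,\xx}$ except that it carries two extra entries and has a trivial $\bb_\ii = 1$ together with a ``split'' level $\ii-1$ that has not been gathered with the old $\aa_{\ii-1}$. The trimming rule $\Redtrim{\ii}$ is exactly designed to re-gather: since the new entry at position $\ii+1$ (the old $\aa_{\ii-1}$-combined level, depending on bookkeeping) is what we want to absorb, one application of a suitable $\Redtrim{}$ with a trivial neighbouring entry merges the two stray levels back by the product rule $\bb_\ii = \aa_\ii\aa_{\ii+2}$ (or the reversed product), recovering precisely $\bbv$. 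So $\aav \rdsp \cdot \rdsp \bbv$, hence $\aav \rdsp^* \bbv$. The case $\ii=1$ and the case $\ii$ even are entirely parallel; one just has to be careful about which side the lcm is taken and the order of the product in the trimming rule. Then induction on the length of the $\rds$-derivation finishes part~(i), using that $\rdsp^*$ is by definition transitive.

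For part~(ii), I would again argue one $\rdsp$-step at a time and then induct, but here the bookkeeping on padding has to be tracked. A split step $\aav \act \Redbrk{\ii,\xx,\yy} = \bbv$ increases depth by $2$, inserting a new ``$\xx/\yy$ block'' in the middle, so when simulating it by ordinary $\RRR$-reduction one must first create room: prepend $\One 2$ (or a suitable even number of trivial entries) on the left to shift indices, so that after shifting the target block by two positions the move becomes an honest $\Red{}$-step applied to the padded multifraction, landing in $\bbv$ followed by some trivial tail $\One{2\qq}$. The point is that Lemma~\ref{lem:move_to_left} (with trivial entries) shows exactly how an $\Red{\ii,\cdot}$ step acts on a $\pdots/1/1/\bb\cc/\pdots$ pattern, which is the pattern one engineers after padding; one pushes the relevant divisor $\yy$ through a trivial entry, realizing the split. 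A $\Redtrim{\ii}$ step, which deletes a trivial entry and merges its two neighbours, is simulated directly: deleting trailing trivial entries is built into the ``up to deleting trivial final entries'' slack already used in~\cite{Dit}, and an internal trivial entry at position $\ii+1$ can be pushed to the end by $\Red{}$-moves (again via Lemma~\ref{lem:move_to_left}) and then dropped, after which the required product $\bb_\ii = \aa_\ii\aa_{\ii+2}$ is exactly what an $\Red{}$-step through the emptied slot produces. Composing over all steps of the $\rdsp^*$-derivation, the left paddings accumulate into a single $\One{2\pp}$ and the right trivial tails accumulate into a single $\One{2\qq}$.

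\textbf{Main obstacle.} The genuinely delicate point is part~(ii): controlling the padding so that \emph{all} the intermediate depth-increases of the split derivation can be pre-paid by a \emph{single} left pad $\One{2\pp}$ applied at the start, rather than inserting trivial entries in the middle at each step (which ordinary $\RRR$-reduction cannot do, since $\Red{}$ preserves depth). The resolution is the observation that a trivial entry, once present, can be transported freely along the multifraction by $\Red{}$-moves (Lemma~\ref{lem:move_to_left} applied to $1/1$-patterns), so it suffices to know in advance how many splits will occur and front-load that many pairs of $1$s; hence $\pp$ can be taken to be the number of $\Redbrk{}$-steps in the derivation. Verifying that the sides (left/right, lcm $\lcm$ vs.\ $\lcmt$) and the product orders in the trimming rules match up correctly across the even/odd and $\ii=1$ cases is routine but must be done carefully against the two displayed rule-tables and Figure~\ref{F:Sharp}.
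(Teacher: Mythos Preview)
Your plan is correct and matches the paper's proof essentially step for step: for~(i) you use $\Redbrk{\ii,\xx,\aa_\ii}$ followed by a single trim, and for~(ii) you prepad by $\One2$, shuttle the trivial pair into position via Lemma~\ref{lem:move_to_left}, apply one genuine $\Red{}$-step, and then compose inductively using compatibility of $\rds$ with left and right multiplication (for trims the paper likewise pushes the freed $1/1$ to the far right, giving the $\One{2\qq}$ tail). One small slip: in the split rule the first argument is the element~$\xx$ dividing $\aa_{\ii+1}$ and the second is the divisor~$\yy$ of~$\aa_\ii$, so where you wrote $\Redbrk{\ii,\aa_\ii,\xx}$ you mean $\Redbrk{\ii,\xx,\aa_\ii}$.
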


\begin{proof}
\ITEM1 For an induction it is sufficient to prove that $\aav \rd \bbv$ implies $\aav \rdsp^* \bbv$. So assume that $\aav \act \Red{\ii, \xx} = \bbv$ with, say, $\ii$~even. Writing $\xx \lcm \aa_\ii = \xx \bb_\ii = \aa_\ii \xx'$ and applying the definition of~$\Redbrk{\ii, \xx, \yy}$, we find
$$\aav \act \Redbrk{\ii, \xx, \aa_\ii} = \aa_1 \sdots \aa_{\ii - 1} /1 / \xx' / \bb_\ii \sdots \bb_\nn = \bb_1 \sdots \bb_{\ii - 2} / \aa_{\ii - 1} /1 / \xx' / \bb_\ii \sdots \bb_\nn.$$
The definition of~$\Red{\ii, \xx}$ gives $\bb_{\ii - 1} = \aa_{\ii - 1}\xx'$, so we deduce
$$\aav \act \Redbrk{\ii, \xx, \aa_\ii} \Redtrim\ii = \bb_1 \sdots \bb_{\ii - 2} / \aa_{\ii - 1}\xx' / \bb_\ii \sdots \bb_\nn = \bbv,$$
whence $\aav \rdsp^2 \bbv$. The argument is similar for $\ii$~odd.

\ITEM2 We prove using induction on~$\mm$ that $\aav \rdsp^\mm \bbv$ implies the existence of~$\pp$ and~$\qq$ satisfying $\One{2\pp} / \aav \rds \bbv / \One{2\qq}$. Put $\nn := \dh\aav$. 
The result is straightforward for~$\mm = 0$ with $\pp = \qq = 0$. 
Suppose that $\mm = 1$. Suppose first $\aav \act \Redbrk{\ii, \xx, \yy} = \bbv$ with, say, $\ii$~even. 
By hypothesis, $\yy$ left-divides~$\aa_\ii$, and we have $\aa_\ii = \yy \bb_\ii$. 
Starting from $\One2 /\aav$, we can first use $\ii$~reduction steps to split~$\aa_\ii$ into~$\bb_\ii$ and~$\yy$ by pushing $\aa_1 \wdots \aa_{\ii - 1}$, and~$\bb_\ii$ to the left using Lemma~\ref{lem:move_to_left} repeatedly: precisely, we find
$$\One2 / \aav \act \Red{2, \aa_1}\Red{3, \aa_2} \pdots \Red{\ii, \aa_{\ii - 1}} \Red{\ii + 1, \bb_\ii}= \aa_1 \sdots \aa_{\ii - 1} / \bb_\ii / 1 / \yy / \aa_{\ii + 1} \sdots \aa_\nn,$$
whence $\One2 /\aav \rd^\ii \bb_1 \sdots \bb_\ii / 1 / \yy / \aa_{\ii + 1} / \bb_{\ii + 4} \sdots \bb_{\nn + 2}$. Let $\aav'$ be the latter multifraction. By hypothesis, $\xx$ divides~$\aa_{\ii + 1}$, which is~$\aa'_{\ii + 3}$, and the lcm of~$\xx$ and~$\yy$, that is, of~$\xx$ and~$\aa_{\ii + 2}$, exists, so $\aav'$ is eligible for~$\Red{\ii + 2, \xx}$. Then we obtain
$$\aav' \act \Red{\ii + 2, \xx} = \bb_1 \sdots \bb_\ii / \bb_{\ii + 1} / \bb_{\ii + 2} / \bb_{\ii + 3} / \bb_{\ii + 4} \sdots \bb_{\nn + 2}\bbv,$$
whence $\One2 /\aav \rds \aav' \rd \bbv$, the expected result with $\pp = 1$ and~$\qq = 0$. 

Suppose now $\aav \act \Redtrim\ii = \bbv$, still with $\ii$~even. By hypothesis, we have $\aa_{\ii + 1} = 1$, and we find
$$\aav \act \Red{\ii + 1, \aa_{\ii + 2}} = \aa_1 \sdots \aa_{\ii - 1} / \aa_\ii\aa_{\ii + 2} /1 / 1 / \aa_{\ii + 3} \sdots \aa_\nn$$
and from there, by the same argument as above with $\One2 / \aav$, we can push $\aa_{\ii + 3} \wdots \aa_\nn$ to the left, obtaining
$$\aav \act \Red{\ii + 1, \aa_{\ii + 2}} \Red{\ii + 2, \aa_{\ii + 3}} \sdots \Red{\nn - 1, \aa_\nn} = \aa_1 \sdots \aa_{\ii - 1} / \aa_\ii\aa_{\ii + 2} /\aa_{\ii + 3} \sdots \aa_\nn / 1 / 1,$$
which implies $\aav \rds \bbv/ \One2$, the expected result with $\pp = 0$ and~$\qq = 1$. The verifications for $\ii$~odd are similar.

Suppose finally $\mm \ge 2$. Write $\aav \rdsp^{\mm - 1} \ccv \rdsp \bbv$. The induction hypothesis implies the existence of~$\pp$ and~$\qq$ satisfying $\One{2\pp} / \aav \rds \ccv / \One{2\qq}$, 
and that of~$\pp'$ and~$\qq'$ satisfying $\One{2\pp'} / \ccv \rds \bbv / \One{2\qq'}$.
As $\RRR$-reduction is compatible with left and right multiplication, $\One{2\pp} / \aav \rds \ccv$ implies $\One{2\pp + 2\pp'} / \aav \rds \One{2\pp'} / \ccv / \One{2\qq}$, and $\One{2\pp'} / \ccv \rds \bbv / \One{2\qq'}$ implies $\One{2\pp'} / \ccv / \One{2\qq} \rds \bbv / \One{2\qq + 2\qq'}$. By transitivity of~$\rds$, we deduce $\One{2\pp + 2\pp'} / \aav \rds \bbv / \One{2\qq + 2\qq'}$ and this is the expected result.
\end{proof}

We deduce a simple connection between $\RRR$- and $\SSS$-reductions to~$\one$:

\begin{prop}\label{P:Part}
If $\MM$ is a gcd-monoid then, for every multifraction~$\aav$ on~$\MM$,
\begin{equation}
\aav \rdsp^* \one \text{\quad is equivalent to \quad} \exists\pp\,(\One{2\pp}/ \aav \rds \one).
\end{equation}
\end{prop}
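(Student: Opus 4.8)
The plan is to prove the equivalence by establishing the two implications separately, each time relying on Lemma~\ref{L:Part}. The key point is that the target multifraction~$\one$ is special: appending trivial entries to~$\one$ again yields~$\one$ (of greater depth), and trivial final entries can always be deleted without changing the represented group element, so the padding bookkeeping on the \emph{right} side of the reductions collapses.

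\smallskip

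For the forward direction, suppose $\aav \rdsp^* \one$. By Lemma~\ref{L:Part}\ITEM2, there exist~$\pp$ and~$\qq$ with $\One{2\pp}/\aav \rds \one/\One{2\qq}$. But $\one/\One{2\qq}$ is just $\One{\nn + 2\qq}$ where $\nn = \dh\one$, that is, it is again an all-trivial multifraction. One then has to argue that $\One{2\pp}/\aav \rds \One{m}$ for an all-trivial multifraction of depth~$m$ implies $\One{2\pp}/\aav \rds \one$ for the original fixed depth; this is the standard ``trimming trivial final entries'' step, which is available since $\rds$ includes such deletions up to the conventions recalled after the definition of~$\RD\MM$ (or, more carefully, one invokes that $\aav \rds \One{m}$ forces each intermediate multifraction to represent~$1$, and the canonical normal reductions reach~$\one$; in any case this is the same manipulation already used implicitly in~\cite{Diu} for semi-convergence). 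Hence $\exists\pp\,(\One{2\pp}/\aav \rds \one)$.

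\smallskip

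For the converse, suppose $\One{2\pp}/\aav \rds \one$ for some~$\pp$. By Lemma~\ref{L:Part}\ITEM1, this gives $\One{2\pp}/\aav \rdsp^* \one$. It then remains to remove the $2\pp$ leading trivial entries on the left, \emph{i.e.}\ to see that prepending trivial entries is invisible to $\SSS$-reduction to~$\one$. Concretely, one checks that $\One{2}/\bbv \rdsp^* \one$ implies $\bbv \rdsp^* \one$: indeed, inspecting the rules $\Redbrk{\ii,\xx,\yy}$ and $\Redtrim\ii$, a leading pair $1/1$ never interacts with any rule applied at position $\ii \ge 3$ (it only shifts indices), and a rule at position $\ii \in \{1,2\}$ applied to $\One2/\bbv$ either leaves a leading $1/1$ again or, via $\Redtrim{1}$, removes it; tracking this through the whole sequence shows that deleting the leading $\One2$ yields a valid $\SSS$-reduction from $\bbv$ to~$\one$. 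Iterating $\pp$~times gives $\aav \rdsp^* \one$.

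\smallskip

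The main obstacle is the second direction's bookkeeping: unlike $\RRR$-reduction, $\SSS$-reduction changes the depth, so ``the leading $\One2$ stays at the front'' needs a clean invariant. The safest formulation is to prove the auxiliary claim ``for all~$\ccv$, $\One2/\ccv \rdsp^* \one \Longrightarrow \ccv \rdsp^* \one$'' by induction on the length of the $\SSS$-reduction, with a case split on whether the first rule acts at position $1$ or $2$ (the only positions that can touch the leading block) or at position $\ge 3$ (where it commutes with the shift); the $\Redtrim{}$ cases are where the leading block can actually be consumed, and one must check that what remains is still an $\SSS$-reduction of the unpadded multifraction. Once this claim is in hand, both implications are immediate from Lemma~\ref{L:Part}, and the proof of the proposition is just two short paragraphs.
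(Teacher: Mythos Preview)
Your forward direction matches the paper's, but the added worry about depth is unnecessary: $\RRR$-reduction preserves depth, so once Lemma~\ref{L:Part}\ITEM2 gives $\One{2\pp}/\aav \rds \bbv/\One{2\qq}$ with $\bbv = \One\rr$, the target is automatically $\One{2\pp + \dh\aav}$, \ie\ $\one$ of the correct depth. There is no ``trimming trivial final entries'' step to perform.

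For the converse, your route genuinely differs from the paper's and is substantially harder. You try to \emph{remove} leading padding by proving the auxiliary claim $\One2/\ccv \rdsp^* \one \Rightarrow \ccv \rdsp^* \one$ via an induction on the length of the $\SSS$-reduction, which you only sketch. The paper instead observes that $\SSS$-reduction can \emph{create} leading padding: if $\aav$ has depth $\ge 2$ and is not already trivial, let $\ii$ be minimal with $\aa_\ii \ne 1$; then one checks directly from the definition (taking $\xx = 1$, $\yy = \aa_\ii$) that $\aav \act \Redbrk{\ii, 1, \aa_\ii} = \One2/\aav$, whence $\aav \rdsp^* \One{2\pp}/\aav$ in $\pp$ steps. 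Composing with Lemma~\ref{L:Part}\ITEM1 then gives $\aav \rdsp^* \One{2\pp}/\aav \rdsp^* \one$ immediately, with no bookkeeping at all. Your induction may well go through, but the case split is not innocent: for instance $\Redbrk{2, \xx, 1}$ applied to $\One2/\ccv$ yields $\One2/(\xx/1/\cc'_1/\cc_2/\cdots)$ with $\xx\cc'_1 = \cc_1$, and you would still need to exhibit an $\SSS$-step from $\ccv$ to $\xx/1/\cc'_1/\cc_2/\cdots$, which is not one of the primitive rules. The paper's one-line observation sidesteps all of this.
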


\begin{proof}
Assume $\aav \rdsp^* \one$, say $\aav \rdsp^* \One\rr$. By Lemma~\ref{L:Part}, there exist~$\pp, \qq$ satisfying $\One{2\pp} / \aav \rds \One{\rr + 2\qq}$, whence $\One{2\pp}/\aav \rds \one$. 

Conversely, assume $\One{2\pp}/ \aav \rds \one$. Suppose first $\dh\aav \le 1$. Then $\One{2\pp}/ \aav \rds \one$ is possible only for $\aav = 1$, in which case $\aav \rdsp^* \one$ is trivial. Now suppose $\dh\aav \ge 2$ with $\aav$ not trivial, that is, at least one entry of~$\aav$ is not~$1$. Let $\ii$ be minimal satisfying $\aa_\ii \not= 1$. Then $\aav$ is eligible for~$\Redbrk{\ii, 1, \aa_1}$, and we find $\aav \act \Redbrk{\ii, 1, \aa_\ii} = \One2 / \aav$, whence $\aav \rdsp^* \One{2\pp} / \aav$ in $\pp$~steps. Next, by Lemma~\ref{L:Part} again, $\One{2\pp}/ \aav \rds \one$ implies $\One{2\pp} / \aav \rdsp^* \one$. We deduce $\aav \rdsp^* \one$ by transitivity of~$\rdsp^*$.
\end{proof}

We naturally say that $\SSS$-reduction is semi-convergent for~$\MM$ if $\aav \rdsp^*\nobreak \one$ holds for every multifraction~$\aav$ that represents~$1$ in~$\EG\MM$. Proposition~\ref{P:Part} implies:

\begin{coro}\label{C:Part}
If $\MM$ is a gcd-monoid, then $\SSS$-reduction is semi-convergent for~$\MM$ if and only if $\RRR$-reduction is semi-convergent for~$\MM$ up to padding.
\end{coro}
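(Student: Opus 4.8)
The statement to prove is Corollary~\ref{C:Part}: for a gcd-monoid~$\MM$, $\SSS$-reduction is semi-convergent for~$\MM$ if and only if $\RRR$-reduction is semi-convergent for~$\MM$ up to padding.

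The plan is to deduce this directly from Proposition~\ref{P:Part}, which already establishes, for each individual multifraction~$\aav$, the equivalence of $\aav \rdsp^* \one$ and $\exists\pp\,(\One{2\pp}/\aav \rds \one)$. First I would recall the two definitions carefully. By the definition stated just after Proposition~\ref{P:Part}, $\SSS$-reduction is semi-convergent for~$\MM$ exactly when $\aav \rdsp^* \one$ holds for \emph{every} multifraction~$\aav$ representing~$1$ in~$\EG\MM$. On the other side, by Definition~\ref{D:InflSC}, $\RRR$-reduction is semi-convergent up to $\ff$-padding when $\One{2\ff(\aav)}/\aav \rds \one$ holds for every such~$\aav$; and "semi-convergent up to padding" (without naming a map) means this holds for \emph{some} map~$\ff\colon\FR\MM\to\NNNN$.

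The core of the argument is then a quantifier-juggling observation. Suppose $\SSS$-reduction is semi-convergent for~$\MM$. Then for every~$\aav$ representing~$1$ we have $\aav \rdsp^* \one$, so by Proposition~\ref{P:Part} there exists~$\pp$ with $\One{2\pp}/\aav \rds \one$. Define $\ff(\aav)$ to be the least such~$\pp$ when $\aav$ represents~$1$, and (say) $\ff(\aav) = 0$ otherwise; this is a well-defined map $\FR\MM\to\NNNN$, and by construction $\One{2\ff(\aav)}/\aav \rds \one$ holds for every~$\aav$ representing~$1$, so $\RRR$-reduction is semi-convergent up to $\ff$-padding, hence up to padding. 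Conversely, if $\RRR$-reduction is semi-convergent up to $\ff$-padding for some~$\ff$, then for every~$\aav$ representing~$1$ we have $\One{2\ff(\aav)}/\aav \rds \one$, so in particular $\exists\pp\,(\One{2\pp}/\aav \rds \one)$ holds, and Proposition~\ref{P:Part} gives $\aav \rdsp^* \one$; thus $\SSS$-reduction is semi-convergent for~$\MM$.

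There is essentially no obstacle here: the whole content has been front-loaded into Proposition~\ref{P:Part}, and the corollary is just the "for all~$\aav$" version together with the bookkeeping that turns the per-multifraction existential $\exists\pp$ into the existence of a single map~$\ff$. The one point worth a sentence of care is that the map~$\ff$ produced in the forward direction need not be Turing-computable --- that is fine, since the notion of "semi-convergent up to padding" in Corollary~\ref{C:Part} imposes no computability requirement (computability enters only in Conjecture~$\ConjAp$ and Proposition~\ref{P:PaddedSC2WP}, not here). I would note this explicitly to forestall confusion, and otherwise the proof is a two-line application of Proposition~\ref{P:Part} in each direction.
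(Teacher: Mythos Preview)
Your proof is correct and matches the paper's approach: the paper states Corollary~\ref{C:Part} as an immediate consequence of Proposition~\ref{P:Part} without further argument, and your quantifier bookkeeping (packaging the per-$\aav$ witness~$\pp$ into a map~$\ff$) is exactly what is implicitly required. Your remark that no computability is demanded of~$\ff$ here is also accurate and worth keeping.
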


\subsection{Connection with subword reversing and Property~$\PropH$}\label{SSPropH}

In the absence of a
a solution of the word problem for general Artin--Tits groups, Property~$\PropH$ was proposed~\cite{Dia} as a weaker statement in the direction of such a solution along the line of Dehn's algorithm for hyperbolic groups, and it was proved that Property~$\PropH$ is indeed satisfied for the standard presentation of all Artin--Tits groups of type~FC~\cite{Dib} and of sufficiently large type~\cite{GR}. We now establish a simple connection between Property~$\PropH$ and multifraction reduction:

\begin{prop}\label{P:H}
If $\MM$ is a gcd-monoid and $(\SS, \RR)$ is an lcm-presentation for~$\MM$ (on both sides), then the following are equivalent: \vspace{0.5mm}\\
\null\HS3\ITEM1 $\RRR$-reduction is semi-convergent for~$\MM$ up to padding;\\
\null\HS3\ITEM2 $\SSS$-reduction is semi-convergent for~$\MM$;\\
\null\HS3\ITEM3 Property~$\PropH$ is true for~$(\SS, \RR)$.
\end{prop}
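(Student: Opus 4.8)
The plan is to obtain the cycle of implications $\ITEM1 \Leftrightarrow \ITEM2$ and $\ITEM2 \Leftrightarrow \ITEM3$, reducing everything to Corollary~\ref{C:Part} and to a direct comparison between split reduction and subword reversing. The equivalence $\ITEM1 \Leftrightarrow \ITEM2$ is already established: it is exactly Corollary~\ref{C:Part}, valid for any gcd-monoid, so no presentation hypothesis is needed there. Thus the real content is $\ITEM2 \Leftrightarrow \ITEM3$, and this is where the lcm-presentation hypothesis $(\SS,\RR)$ must be used, since Property~$\PropH$ is a statement about a specific presentation.

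The key idea for $\ITEM2 \Leftrightarrow \ITEM3$ is that, when $(\SS,\RR)$ is an lcm-presentation on both sides, a single step of split reduction $\Redbrk{\ii,\xx,\yy}$ with $\xx,\yy$ taken to be \emph{atoms} (and the trimming rules $\Redtrim\ii$) corresponds precisely to one elementary step of the two-sided subword reversing / rewriting process that underlies Property~$\PropH$: pushing a negative letter across a positive one through the defining relation that produces their lcm. First I would make this dictionary precise. Given a word $\ww = \ww_1 \ww_2\inv \ww_3 \pdots$ in $\SS \cup \SSb$, represent it by the multifraction $\clp{\underline\ww} = \clp{\ww_1} / \clp{\ww_2} / \pdots$ on~$\MM$; conversely a split-reduction sequence on such a multifraction can be tracked at the level of words over $\SS\cup\SSb$, each $\Redbrk{\ii,\xx,\yy}$-step with atomic $\xx,\yy$ recording the replacement of a factor $\INV\yy\,\xx$ (or $\xx\,\INV\yy$) by the corresponding word read off the relation $\xx \lcm \yy = \yy\bb_{\ii+1} = \xx\bb_{\ii+2}$ in~$\RR$, and each $\Redtrim\ii$-step recording a free cancellation $\ss\INV\ss \to \ew$ together with concatenation of adjacent same-sign blocks. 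The point of allowing general (non-atomic) $\xx,\yy$ in $\Redbrk{\ii,\xx,\yy}$ is only a matter of granularity: any such step can be decomposed into finitely many atomic ones, since $\xx$ and~$\yy$ are products of atoms and the relevant lcms can be built up atom by atom using the lcm-presentation. Hence $\aav \rdsp^* \one$ with the \emph{word} $\ww$ representing~$\one$ is equivalent to the existence of a rewriting sequence from $\ww$ to the empty word using relations of~$\RR$ in the manner prescribed by subword reversing, which is exactly the combinatorial content of Property~$\PropH$ being able to trivialise every word representing~$1$ in the group.

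The main obstacle, and the step I expect to require the most care, is getting the two-sided bookkeeping right: Property~$\PropH$ as stated in~\cite{Dia,Dib,GR} is phrased in terms of a particular ``geodesic shrinking'' or reversing normal form, and one must check that the particular order in which split reduction performs its moves (always at the leftmost nontrivial level, increasing depth by~$2$, etc.) can be matched — in both directions — with the moves allowed by Property~$\PropH$, with neither procedure being strictly more powerful than the other on words representing~$1$. In particular, for the implication $\ITEM3 \Rightarrow \ITEM2$ one starts from a Property~$\PropH$ trivialisation and must reorganise it into a legal split-reduction sequence, which may require interleaving extra padding steps of the form $\aav \act \Redbrk{\ii,1,\aa_\ii} = \One2 / \aav$ (as in the proof of Proposition~\ref{P:Part}) to create room at the appropriate depth; for $\ITEM2 \Rightarrow \ITEM3$ one must verify that the trimming rules and the non-atomic lcm steps do not take us outside what Property~$\PropH$ sanctions. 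Once this dictionary is pinned down, both implications follow by a straightforward induction on the length of the respective rewriting sequences, using that $\aav$ represents~$1$ in $\EG\MM$ if and only if the associated word over $\SS \cup \SSb$ represents~$1$ in $\GR\SS\RR = \EG\MM$.
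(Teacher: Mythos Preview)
Your overall plan matches the paper's: $\ITEM1 \Leftrightarrow \ITEM2$ is exactly Corollary~\ref{C:Part}, and $\ITEM2 \Leftrightarrow \ITEM3$ goes through a dictionary between split reduction on multifractions and special transformations on words. Two points, however, deserve correction.

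First, your account of trimming is wrong. A step $\Redtrim\ii$ does \emph{not} record a free cancellation: it applies when $\aa_{\ii+1} = 1$, hence $\ww_{\ii+1}$ is already empty, and its only effect is to merge the adjacent same-sign blocks $\ww_\ii$ and $\ww_{\ii+2}$ into one. The word $\ww$ is unchanged; only its parsing into blocks changes. Conversely, the deletion of $\INV\ss\ss$ (resp.\ $\ss\INV\ss$) is the degenerate case of right- (resp.\ left-) reversing, and on the multifraction side it corresponds to $\Redbrk{\ii,\ss,\ss}$ followed by trimming of the two trivial entries this creates. Confusing these two will scramble the direction $\ITEM3 \Rightarrow \ITEM2$.

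Second, the execution in the paper is simpler than you anticipate, and the difficulty is not where you locate it. There is no need to decompose a general $\Redbrk{\ii,\xx,\yy}$ into atomic ones, nor to reorganise sequences or insert padding. The key device you are missing is the notion of a word \emph{sharply} representing a multifraction (Definition~\ref{def:represents}: the inner $\ww_\ii$ are nonempty). With this, the paper proves two one-step simulation lemmas. For $\ITEM2 \Rightarrow \ITEM3$ (Lemma~\ref{L:Connection1}): a single $\Redbrk{\ii,\xx,\yy}$ is simulated by a positive/negative equivalence followed by a block of reversing steps, using Lemma~\ref{L:Lcm} to the effect that iterated atomic reversing of $\INV\uu\vv$ computes $\clp\uu \lcm \clp\vv$; a trim step needs no word-level move at all. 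For $\ITEM3 \Rightarrow \ITEM2$ (Lemma~\ref{L:Connection2}): a positive or negative equivalence, applied to a \emph{sharp} representative, touches a single block and leaves the multifraction unchanged; one reversing step at a junction $\INV\tt\,\ss$ becomes $\Redbrk{\ii,\ss,\tt}$ with atoms, followed by whatever trims are needed to restore sharpness. Sharpness is precisely what prevents an equivalence from straddling a sign change, and that---not move ordering or padding---is the actual bookkeeping issue.
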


By Corollary~\ref{C:Part}, \ITEM1 and~\ITEM2 in Proposition~\ref{P:H} are equivalent, and we are left with establishing the equivalence of~\ITEM2 and~\ITEM3, which will occupy the rest of the section. 

We first recall a few definitions. If $(\SS, \RR)$ is a monoid presentation, and $\ww, \ww'$ are words in $\SS \cup \SSb$, then we say that $\ww'$ is obtained from~$\ww$ by one step of \emph{right $\RR$-reversing} if $\ww'$ is obtained by deleting some length~$2$ factor $\INV\ss \ss$ of~$\ww$ or replacing some length~$2$ factor~$\INV\ss \tt$ of~$\ww$ with~$\vv \INV\uu$ such that $\ss\vv = \tt\uu$ is a relation of~$\RR$. Symmetrically, we say that $\ww'$ is obtained from~$\ww$ by one step of \emph{left $\RR$-reversing} if $\ww'$ is obtained by deleting some length~$2$ factor $\ss \INV\ss$ of~$\ww$ or replacing some length~$2$ factor~$\ss \INV\tt$ of~$\ww$ with~$\INV\vv \uu$ such that $\vv\ss = \uu\tt$ is a relation of~$\RR$. 

\begin{defi}\label{D:PropH}
Suppose that $(\SS, \RR)$ is a monoid presentation. If $\ww, \ww'$ are words in $\SS \cup \SSb$, we say that $\ww \spec \ww'$ is true if one can go from~$\ww$ to~$\ww'$ using finitely many \emph{special} transformations of the following types: \ITEM1 replacing a positive factor~$\uu$ of~$\ww$ by~$\uu'$ with $\uu' \equivp \uu$ (``positive equivalence''), \ITEM2 replacing a negative factor~$\INV\uu$ of~$\ww$ by~$\INV{\uu'}$ with $\uu' \equivp \uu$ (``negative equivalence''), \ITEM3 right $\RR$-reversing, \ITEM4 left $\RR$-reversing. We say that $(\SS, \RR)$ satisfies \emph{Property~$\PropH$} if $\cl\ww = 1$ implies $\ww \spec \ew$, where $\ew$ is the empty word.
\end{defi}

All special transformations map a word in $\SS \cup \SSb$ to one that represents the same element in~$\EG\MM$, so $\ww \spec \ew$ always implies $\cl\ww = 1$: Property~$\PropH$ says that this implication is an equivalence. Note that special transformations add no new trivial factor~$\ss\INV\ss$ or~$\INV\ss\ss$, a situation reminiscent of Dehn's algorithm for hyperbolic groups~\cite[Sec.\,1.2]{Dib}.

Let us say that a presentation~$(\SS, \RR)$ of a monoid~$\MM$ is a \emph{right-lcm presentation} if $\RR$ contains one relation for each pair~$(\ss, \tt)$ in~$\SS \times \SS$ such that $\ss$ and~$\tt$ admit a common right-multiple and this relation has the form $\ss\vv = \tt\uu$ where both $\ss\vv$ and $\tt\vv$ represent the right-lcm $\ss \lcm \tt$. A left-lcm presentation is defined symmetrically. By~\cite[Thm.~4.1]{Dfx}, every strongly noetherian gcd-monoid admits a right-lcm presentation and a left-lcm presentation. For instance, the standard presentation of an Artin-Tits monoid is an lcm presentation on both sides. The point here is that, if $(\SS, \RR)$ is a right-lcm presentation, then right-reversing computes the right-lcm in~$\MON\SS\RR$ in the following sense:

\begin{lemm}[{\cite[Prop.~3.6]{Dia}}]\label{L:Lcm}
Assume that $\MM$ is a strongly noetherian gcd-monoid and $(\SS, \RR)$ is a right-lcm presentation for~$\MM$. Then, for all words~$\uu, \vv$  in~$\SS^*$, the elements~$\clp\uu$ and~$\clp\vv$ of~$\MM$ admit a common right-multiple if and only if the right $\RR$-reversing of~$\INV\uu \vv$ leads to  word~$\vv' \INV{\uu'}$ with~$\uu', \vv'$ in~$\SS^*$ and, in this case, one has $\clp{\uu \vv'} = \clp{\vv \uu'} = \clp\uu \lcm \clp\vv$.
\end{lemm}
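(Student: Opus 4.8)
The plan is to deduce the statement from the general completeness theory of subword reversing for right-complemented presentations, as developed in~\cite{Dfx}, splitting the ``if and only if'' into an easy \emph{soundness} half and a harder \emph{completeness} half. Observe first that right $\RR$-reversing only ever rewrites factors of the form $\INV\ss\tt$ with $\ss,\tt\in\SS$, so a word to which no such step applies is automatically of the form $\vv'\INV{\uu'}$ with $\uu',\vv'\in\SS^*$; thus ``reversing of $\INV\uu\vv$ leads to $\vv'\INV{\uu'}$'' simply means that the reversing process started from $\INV\uu\vv$ terminates, and termination in the cases that matter (those where a common right-multiple exists) will come out of the completeness half, using the noetherianity of~$\MM$.

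For the soundness half, suppose right $\RR$-reversing takes $\INV\uu\vv$ to $\vv'\INV{\uu'}$. Each elementary step replaces a factor $\INV\ss\tt$ by $\vv_1\INV{\uu_1}$, where $\ss\vv_1=\tt\uu_1$ is a relation of~$\RR$ and hence an equality in~$\MM$; displaying the successive steps as the cells of the usual rectangular reversing diagram, with $\uu$ along one side and $\vv$ along the other, and composing the cells gives $\clp{\uu\vv'}=\clp{\vv\uu'}$ directly in~$\MM$, so $\clp\uu$ and $\clp\vv$ do admit the common right-multiple $\clp{\uu\vv'}$. For the converse and the lcm claim I would invoke the completeness criterion of~\cite{Dfx}: a right-complemented presentation that satisfies the cube condition is \emph{complete}, which means precisely that whenever $\clp{\uu\xx}=\clp{\vv\yy}$ holds for some $\xx,\yy\in\SS^*$, the reversing of $\INV\uu\vv$ terminates --- at some $\vv'\INV{\uu'}$ --- with $\clp{\uu\vv'}\dive\clp{\uu\xx}$. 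This is exactly what is wanted: together with soundness, which says that $\clp{\uu\vv'}$ \emph{is} a common right-multiple of $\clp\uu$ and~$\clp\vv$, the fact that $\clp{\uu\vv'}$ left-divides every common right-multiple forces $\clp{\uu\vv'}=\clp{\vv\uu'}=\clp\uu\lcm\clp\vv$. So the proof reduces to checking that a right-lcm presentation meets the two hypotheses of the criterion: (i)~it is right-complemented --- which is immediate, since by definition $\RR$ contains exactly one relation $\ss\vv_1=\tt\uu_1$ for each pair $(\ss,\tt)$ of atoms admitting a common right-multiple and $\clp{\ss\vv_1}$ represents $\ss\lcm\tt$; and (ii)~it satisfies the cube condition --- for which one uses that in a gcd-monoid the right-lcm is associative wherever it is defined, \ie, if $(\aa\lcm\bb)\lcm\cc$ exists then $\bb\lcm\cc$ and $\aa\lcm(\bb\lcm\cc)$ exist as well and all the triple lcms coincide with the least common right-multiple of~$\{\aa,\bb,\cc\}$, and feeding this associativity into the three-atom reversing diagram produces the cube condition.

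I expect the completeness half to be the genuine obstacle. Compared with the Garside or spherical-type setting, where all lcms exist, the delicate point is that in a gcd-monoid the cube condition has to be stated with an existence proviso, so the induction underlying the completeness criterion --- an induction on the pair of word-lengths $(|\uu|,|\vv|)$, well-founded because $\MM$ is strongly noetherian, with base case the one-letter pairs, where the assertion is just the definition of a right-lcm presentation --- must carry along, and re-verify at each splitting step, that the intermediate lcms it invokes are indeed defined; the ``associativity wherever defined'' form of~(ii) is exactly what makes this bookkeeping go through, and it is also what keeps the reversing process from running forever. Writing that induction out in full is what one would do in order to avoid quoting~\cite{Dfx}; since \cite[Prop.~3.6]{Dia} obtains the present statement precisely by appealing to this established reversing machinery, verifying (i) and~(ii) and then invoking~\cite{Dfx} is the efficient route.
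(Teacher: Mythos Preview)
The paper does not prove this lemma at all: it is imported wholesale from~\cite[Prop.~3.6]{Dia}, with no argument given beyond the citation in the header. Your sketch --- soundness from the reversing diagram, completeness via the right-complemented/cube-condition criterion of~\cite{Dfx}, with strong noetherianity supplying the well-founded induction that guarantees termination --- is an accurate reconstruction of the machinery behind that cited result, and your closing remark correctly identifies that the efficient route is simply to quote it.
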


Of course, if $(\SS, \RR)$ is a left-lcm presentation of~$\MM$, left $\RR$-reversing computes the left-lcm in~$\MM$ in a similar sense.

Our aim from now on is to connect the relation~$\spec$ on words in $\SS \cup \SSb$ with partial reduction of multifractions. The convenient bridge between words and multifractions is as follows:

\begin{defi}\label{def:represents}
Suppose that $\MM = \MON\SS\RR$. A word~$\ww$ in $\SS \cup \SSb$ is said to \emph{represent} a depth~$\nn$ multifraction~$\aav$ on~$\MM$ if there exist words $\ww_1 \wdots \ww_\nn$ in~$\SS^*$ satisfying $\ww = \ww_1 \INV{\ww_2} \ww_3 \INV{\ww_4} \pdots$ and $\clp{\ww_\ii} = \aa_\ii$ for every~$\ii$. We say that $\ww$ \emph{sharply} represents~$\aav$ if, in addition, $\ww_\ii$ is nonempty for $1 < \ii < \nn$.
\end{defi}

A word~$\ww$ in $\SS \cup \SSb$ may represent several multifractions, since the expression of~$\ww$ as $\ww_1 \INV{\ww_2} \ww_3 \INV{\ww_4} \pdots$ need not be unique if no maximality condition is required, but it sharply represents a unique multifraction.

\begin{lemm}\label{L:Connection1}
Assume that $\MM$ is a gcd-monoid with lcm presentation~$(\SS, \RR)$ and $\ww$ is a word in $\SS \cup \SSb$ representing a multifraction~$\aav$. Then $\aav \rdsp \bbv$ implies $\ww \spec \ww'$ for some word~$\ww'$ in $\SS \cup \SSb$ representing~$\bbv$.
\end{lemm}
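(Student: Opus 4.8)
The plan is to prove Lemma~\ref{L:Connection1} by a case analysis on the type of elementary step $\aav \rdsp \bbv$, namely whether it is a genuine split reduction $\aav \act \Redbrk{\ii, \xx, \yy} = \bbv$ or a trimming step $\aav \act \Redtrim\ii = \bbv$, and in each case by exhibiting an explicit word~$\ww'$ representing~$\bbv$ together with an explicit sequence of special transformations (positive/negative equivalence, right/left $\RR$-reversing) taking~$\ww$ to~$\ww'$. First I would fix words $\ww_1 \wdots \ww_\nn$ in~$\SS^*$ with $\ww = \ww_1 \INV{\ww_2} \ww_3 \pdots$ and $\clp{\ww_\kk} = \aa_\kk$, and concentrate on the block $\ww_{\ii-1}^{(\INV{})} \ww_\ii^{(\INV{})} \ww_{\ii+1}^{(\INV{})}$ around position~$\ii$ that actually changes, since the transformations $\spec$ act on factors and leave the rest of~$\ww$ untouched.

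For the main case, say $\ii$ even, the defining equalities $\yy \bb_\ii = \aa_\ii$, $\yy\bb_{\ii+1} = \xx\bb_{\ii+2} = \xx \lcm \yy$, $\xx\bb_{\ii+3} = \aa_{\ii+1}$ tell me what $\ww'$ should look like: I pick a word $\vv_\yy$ in~$\SS^*$ with $\clp{\vv_\yy} = \yy$ that is a prefix of (some word positively equivalent to) $\ww_\ii$, so positive equivalence replaces $\ww_\ii$ by $\vv_\yy\, \ww_{\bb_\ii}$ with $\clp{\ww_{\bb_\ii}} = \bb_\ii$; similarly I split off a suffix $\vv_\xx$ of $\ww_{\ii+1}$ with $\clp{\vv_\xx} = \xx$. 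The factor of~$\ww$ in question is then (after positive equivalences) $\pdots \vv_\yy \ww_{\bb_\ii} \INV{\ww_{\ii+1}} \pdots$ near position~$\ii$, and more precisely the subword $\ww_{\bb_\ii}\INV{\vv_\xx}$ — wait, the sign pattern needs care: for $\ii$ even position~$\ii$ carries an inverse, so I must track this. The crucial move is then to apply right or left $\RR$-reversing (depending on parity) to the length-two-generator-segment coming from~$\vv_\yy$ against~$\vv_\xx$: by Lemma~\ref{L:Lcm}, reversing $\INV{\vv_\yy}\vv_\xx$ (or $\vv_\xx\INV{\vv_\yy}$, matching the side) produces $\vv_{\bb}\INV{\vv_{\bb'}}$ with $\clp{\vv_\yy \vv_\bb} = \clp{\vv_\xx \vv_{\bb'}} = \yy \lcm \xx$ — exactly the new depth-$(\ii{+}1)$ and depth-$(\ii{+}2)$ entries $\bb_{\ii+1}, \bb_{\ii+2}$ of~$\bbv$. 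Collecting everything, the resulting word is $\ww_1 \pdots \ww_{\ii-1}^{(\INV{})} \ww_{\bb_\ii}^{(\INV{})} \vv_\bb^{(\INV{})} \vv_{\bb'}^{(\INV{})} \ww_{\bb_{\ii+3}}^{(\INV{})} \ww_{\ii+2} \pdots$, which by construction represents~$\bbv$ (its entries are $\aa_1 \wdots \aa_{\ii-1}, \bb_\ii, \bb_{\ii+1}, \bb_{\ii+2}, \bb_{\ii+3}, \aa_{\ii+2} \wdots$, matching the definition of $\Redbrk{\ii,\xx,\yy}$ and the depth increase by~$2$), and we have reached it from~$\ww$ by positive equivalences followed by one reversing step, i.e.\ $\ww \spec \ww'$. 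The trimming case is easier: $\aav \act \Redtrim\ii$ with $\aa_{\ii+1} = 1$ means $\ww_{\ii+1}$ is positively equivalent to the empty word, so a negative equivalence deletes the factor $\INV{\ww_{\ii+1}}$, after which $\ww_\ii^{(\INV{})}\ww_{\ii+2}^{(\INV{})}$ becomes a single positive (or negative) factor representing $\aa_\ii \aa_{\ii+2}$ (resp.\ $\aa_{\ii+2}\aa_\ii$), recorded by a further positive equivalence; the resulting word represents~$\bbv$.

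The steps, in order, are: (i)~set up the block decomposition of~$\ww$ and reduce to a statement about the three entries at positions $\ii{-}1,\ii,\ii{+}1$; (ii)~handle $\Redtrim\ii$ by a negative equivalence plus a positive equivalence; (iii)~handle $\Redbrk{\ii,\xx,\yy}$ for $\ii$ even by splitting $\ww_\ii = \vv_\yy\ww_{\bb_\ii}$ and $\ww_{\ii+1} = \ww_{\bb_{\ii+3}}\vv_\xx$ via positive equivalences, then one right-$\RR$-reversing step invoking Lemma~\ref{L:Lcm} to realise the lcm $\xx\lcm\yy$; (iv)~do the symmetric argument for $\ii$ odd using left-$\RR$-reversing and the left-lcm $\xx\lcmt\yy$; (v)~in every case verify the output word represents the correct multifraction~$\bbv$ by reading off its entries. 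The main obstacle I expect is bookkeeping of the alternating sign pattern: whether position~$\ii$ in $\ww = \ww_1\INV{\ww_2}\ww_3\pdots$ carries a bar depends on the parity of~$\ii$, and this parity is precisely what distinguishes the even/odd cases of $\Redbrk{}$, so I must make sure the reversing I apply (right vs.\ left) is the one compatible with the actual sign of the factor $\vv_\yy \,(\INV{})\, \vv_\xx$ that appears — i.e.\ that a $\INV{}\,\cdot$ pattern is what right-reversing consumes and a $\cdot\,\INV{}$ pattern is what left-reversing consumes. A secondary subtlety is that splitting $\ww_\ii$ as $\vv_\yy \ww_{\bb_\ii}$ is only available \emph{up to} $\equivp$, so the positive-equivalence moves of type \ITEM1/\ITEM2 in Definition~\ref{D:PropH} are exactly what licenses it; once that is granted, the reversing step and the final identification of entries are routine given Lemma~\ref{L:Lcm}.
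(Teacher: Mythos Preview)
Your proposal is correct and follows essentially the same route as the paper: positive/negative equivalences to expose prefixes $\vv_\yy$, $\vv_\xx$ of $\ww_\ii$, $\ww_{\ii+1}$, then right (resp.\ left) $\RR$-reversing of $\INV{\vv_\yy}\vv_\xx$ (resp.\ $\vv_\xx\INV{\vv_\yy}$) via Lemma~\ref{L:Lcm} for the $\Redbrk{}$ case, while the trimming case is in fact trivial since in a gcd-monoid the only word representing~$1$ is the empty word, so $\ww_{\ii+1}=\ew$ and one may take $\ww'=\ww$ with a re-parsed decomposition. Two small slips to fix: for $\ii$ even the factorisation of $\ww_{\ii+1}$ is $\vv_\xx\,\ww_{\bb_{\ii+3}}$ (prefix, not suffix, since $\xx\bb_{\ii+3}=\aa_{\ii+1}$), and the reversing supplied by Lemma~\ref{L:Lcm} is in general a finite \emph{sequence} of elementary reversing steps rather than a single one, which is harmless since $\spec$ allows finitely many.
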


\begin{proof}
Put $\nn = \dh\aav$, and let $\ww_1 \wdots \ww_\nn$ satisfy $\ww = \ww_1 \INV{\ww_2} \ww_3 \INV{\ww_4} \pdots$ and $\clp{\ww_\ii} = \aa_\ii$ for every~$\ii$. Suppose first $\aav \act \Redtrim{\ii} = \bbv$ with, say, $\ii$~odd. By hypothesis, we have $\aa_\ii = 1$, which requires $\ww_\ii = \ew$, because there is no nontrivial invertible element in~$\MM$, so the empty word must be the only element of its $\equivp$-class. Then we have $\bb_\ii = \aa_\ii \aa_{\ii + 2}$, and $\ww$ also represents~$\bbv$, as witnessed by the decomposition $\ww_1 \wdots \ww_{\ii - 1}, \ww_\ii\ww_{\ii + 2}, \ww_{\ii + 3} \wdots \ww_\nn$. The argument is similar for $\ii$ even, replacing $\ww_\ii\ww_{\ii + 2}$ with $\ww_{\ii + 2}\ww_\ii$.

Suppose now $\aav \act \Redbrk{\ii, \xx, \yy} = \bbv$ with, say, $\ii$~even. The assumption that $\aav \act \Redbrk{\ii, \xx, \yy}$ is defined implies that $\yy$ left-divides~$\aa_\ii$, and $\xx$ left-divides~$\aa_{\ii + 1}$. It follows that there exist words~$\uu, \vv, \uu'', \vv''$ in~$\SS$ satisfying
$$\ww_\ii \equivp \uu\uu'' \text{\quad and \quad} \ww_{\ii + 1} \equivp \vv\vv'' \text{\quad with \quad} \clp\uu = \yy \text{\quad and \quad}\clp\vv = \xx.$$
By Lemma~\ref{L:Lcm}, the assumption that $\xx \lcm \yy$ exists implies that the right-$\RR$-reversing of~$\INV\uu \vv$ leads to some -word~$\vv' \INV{\uu'}$ with~$\uu', \vv'$ in~$\SS^*$ such that $\uu\vv'$ and $\vv\uu'$ both represent~$\xx \lcm \yy$. Put $\ww'_\kk := \ww_\kk$ for $\kk < \ii$, $\ww'_\ii := \uu''$, $\ww'_{\ii + 1} := \vv'$, $\ww'_{\ii + 2} := \uu'$, $\ww'_{\ii + 3} := \vv''$, and $\ww'_{\kk + 2} := \ww_\kk$ for $\kk > \ii + 1$, and $\ww':= \ww'_1 \INV{\ww'_2} \ww_3 \INV{\ww_4} \pdots$. By construction, $\ww'$ is obtained from~$\ww$ using special transformations (namely positive and negative equivalences and right-reversing) and, by the definition of~$\Redbrk{\ii, \xx, \yy}$, it represents~$\bbv$. The argument is similar for $\ii$ odd, exchanging right- and left-reversings.
\end{proof}

Some care is needed for the other direction, because positive and negative equivalences might cause trouble. This is where sharp representations are useful.

\begin{lemm}\label{L:Connection2}
Assume that $\MM$ is a gcd-monoid with lcm presentation~$(\SS, \RR)$ and $\ww$ is a word in $\SS \cup \SSb$ sharply representing a multifraction~$\aav$. Then $\ww \spec \ww'$ implies $\aav \rdsp^* \bbv$ for some multifraction~$\bbv$ sharply represented by~$\ww'$.
\end{lemm}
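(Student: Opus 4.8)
The plan is to prove Lemma~\ref{L:Connection2} as a converse to Lemma~\ref{L:Connection1}, by induction on the number of special transformations witnessing $\ww \spec \ww'$. It suffices to handle a single special transformation $\ww \spec \ww'$: one shows $\aav \rdsp^* \bbv$ where $\bbv$ is sharply represented by~$\ww'$, and then chains the results along a reversing-and-equivalence path, using that $\rdsp^*$ is reflexive and transitive. So fix a decomposition $\ww = \ww_1 \INV{\ww_2} \ww_3 \pdots$ with $\ww_\ii$ nonempty for $1 < \ii < \nn$ and $\clp{\ww_\ii} = \aa_\ii$. The four cases to treat are the four types of special transformation: positive equivalence, negative equivalence, right $\RR$-reversing, and left $\RR$-reversing.

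The two equivalence cases are essentially trivial at the level of multifractions: if a positive factor~$\uu$ of some~$\ww_\ii$ (or spanning a sub-word of~$\ww_\ii$) is replaced by $\uu' \equivp \uu$, the element $\clp{\ww_\ii}$ is unchanged, so the resulting word represents the \emph{same} multifraction~$\aav$, and we may take $\bbv = \aav$ with zero reduction steps. The only subtlety is that a positive factor of~$\ww$ could in principle straddle the boundary between a positive block~$\ww_\ii$ and an adjacent one; but adjacent blocks are separated by an inversion, so a positive factor cannot cross such a boundary — it lies entirely inside a single~$\ww_\ii$. Hence positive (and, symmetrically, negative) equivalences never change~$\aav$. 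The sharpness of the representation of~$\ww'$ is inherited since equivalences send nonempty positive words to nonempty positive words.

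The heart of the argument is the reversing cases. Suppose $\ww \spec \ww'$ is one step of right $\RR$-reversing, so a length-$2$ factor $\INV\ss \tt$ of~$\ww$ is deleted (if $\ss=\tt$) or replaced by $\vv\INV\uu$ with $\ss\vv = \tt\uu \in \RR$. Such a factor $\INV\ss\tt$ sits at a boundary where a negative block meets the following positive block: say $\ww_\ii$ ends in~$\ss$ and $\ww_{\ii+1}$ begins with~$\tt$ for some odd~$\ii$ (the even case, a positive block followed by a negative block, is handled by the symmetric left-reversing analysis). Writing $\ww_\ii = \ww_\ii^- \ss$ and $\ww_{\ii+1} = \tt\, \ww_{\ii+1}^+$, the reversing step produces a word in which $\ss$ and~$\tt$ are replaced by $\vv\INV\uu$, i.e.\ the new word has blocks $\ldots, \ww_\ii^-,\, \vv\,\ww_{\ii+1}^+ ,\ldots$ after one also possibly merges — wait, more carefully: $\INV{\ww_\ii^- \ss}\,\tt\,\ww_{\ii+1}^+ = \INV{\ss}\,\INV{\ww_\ii^-}$ is wrong; since $\INV{\ww_\ii}$ reverses order, $\INV{\ww_\ii} = \INV\ss\,\INV{\ww_\ii^-}$, so the relevant factor $\INV\ss\,\tt$ actually appears only if $\INV{\ww_\ii^-}$ is empty or we reverse across it — so one must first use positive/negative equivalence to expose the letters, or argue that after possibly trivial equivalences the factor $\INV\ss\tt$ is at the junction. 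This is the technical crux: I would argue that a single right-reversing step applied to~$\ww$ corresponds, via Lemma~\ref{L:Lcm}, to a $\Redbrk{\ii,\xx,\yy}$ step where $\yy = \clp\ss$ is a right-divisor of~$\aa_\ii$ and $\xx = \clp\tt$ is a left-divisor of~$\aa_{\ii+1}$ — precisely the data that makes split reduction applicable — and that iterating such single-letter reversings assembles into the bracket rule of Lemma~\ref{L:Connection1}'s proof run backwards. One then checks the depth bookkeeping: a bracket step raises depth by~$2$, introducing two new blocks that may be trivial, and a trimming step removes a trivial block, so after the reversing one applies $\Redtrim{}$ rules as needed to restore sharpness, exactly mirroring the $\Redbrk{}\Redtrim{}$ pattern in the proof of Lemma~\ref{L:Part}\,\ITEM1.

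The main obstacle I anticipate is this boundary/sharpness bookkeeping in the reversing cases: a reversing step acts on single letters, whereas $\Redbrk{\ii,\xx,\yy}$ acts on whole divisors, so one reversing step need not match one split-reduction step on the nose — several reversing steps may be needed to expose a full divisor~$\yy$ of~$\aa_\ii$ and a full divisor~$\xx$ of~$\aa_{\ii+1}$ before the lcm is computed, and in the interim the intermediate words may fail to sharply represent \emph{any} multifraction (they have mixed-sign blocks mid-computation). The clean way around this is to not track intermediate words at all: use Lemma~\ref{L:Lcm} to summarise a \emph{maximal} run of right-reversing that turns $\INV\uu\vv$ into $\vv'\INV{\uu'}$ as a single event computing $\xx\lcm\yy$, observe that the net effect on the block decomposition is exactly a $\Redbrk{\ii,\clp\vv,\clp\uu}$ followed by possible $\Redtrim{}$, and then note that $\spec$ is defined with "finitely many" transformations, so we are free to regroup the reversing steps into such maximal runs without loss. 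With that regrouping, each stage is a genuine $\rdsp$-step (or several), sharpness is restored by trimming, and the induction closes.
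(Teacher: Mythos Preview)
Your outline is right, and your handling of the equivalence cases matches the paper: sharpness pins every maximal positive (resp.\ negative) factor of~$\ww$ to a single odd-indexed (resp.\ even-indexed) block, so positive/negative equivalence leaves both~$\aav$ and sharpness unchanged.

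The reversing case contains a genuine gap. First, the bookkeeping you struggle with resolves cleanly once the parity is fixed: a factor~$\INV\ss\tt$ sits at a junction $\INV{\ww_\ii}\,\ww_{\ii+1}$ with $\ii$~\emph{even} (not odd), and because inversion reverses letter order, the~$\INV\ss$ at the junction is the inverse of the \emph{first} letter of~$\ww_\ii$. Writing $\ww_\ii = \ss\vv'$ and $\ww_{\ii+1} = \tt\uu'$, the factor~$\INV\ss\tt$ is already exposed at the junction, with no preliminary equivalence needed. More seriously, the granularity mismatch you anticipate does not exist. The rule $\Redbrk{\ii,\xx,\yy}$ accepts \emph{any}~$\xx,\yy$ with $\yy$ left-dividing~$\aa_\ii$, $\xx$ left-dividing~$\aa_{\ii+1}$, and $\xx\lcm\yy$ defined --- single atoms qualify. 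Since $(\SS,\RR)$ is an lcm presentation, the one relation $\ss\vv = \tt\uu$ in~$\RR$ witnesses precisely $\clp\ss\lcm\clp\tt$, so \emph{one} right-reversing step is simulated by the \emph{one} split step $\Redbrk{\ii,\clp\tt,\clp\ss}$: the four new blocks $\vv',\vv,\uu,\uu'$ represent exactly the four new entries of $\aav\act\Redbrk{\ii,\clp\tt,\clp\ss}$, and if any of them is empty one appends $\Redtrim{}$ steps to restore sharpness (the deletion case $\ss=\tt$ being the degenerate instance $\uu=\vv=\ew$). This is the paper's argument. Your proposed regrouping into maximal reversing runs via Lemma~\ref{L:Lcm} is therefore unnecessary, and it is also not clearly sound, since the relation~$\spec$ permits reversings at different positions and on both sides to be interleaved freely with equivalences along a $\spec$-path, so there need be no well-defined maximal runs to collect.
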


\begin{proof}
Put $\nn = \dh\aav$, and let $\ww_1 \wdots \ww_\nn$ satisfy $\ww = \ww_1 \INV{\ww_2} \ww_3 \INV{\ww_4} \pdots$ and $\clp{\ww_\ii} = \aa_\ii$ for every~$\ii$.
Suppose first that $\ww'$ is obtained from~$\ww$ by positive equivalence. Because the representation is sharp, the transformation involves one word~$\ww_\ii$, with odd~$\ii$ only. Then we have $\ww'_\ii \equivp \ww_\ii$, and $\ww'_\kk = \ww_\kk$ for~$\kk \not= \ii$, so $\ww'$ sharply represents~$\aav$, and the result is true with $\bbv = \aav$. The case when $\ww'$ is obtained from~$\ww$ by negative equivalence is similar.

Suppose now that $\ww'$ is obtained from~$\ww$ by one step of right-reversing. This means that there exists an even integer~$\ii$, generators~$\ss, \tt$ in~$\SS$, words~$\uu', \vv'$ in~$\SS^*$, and a relation $\ss\vv = \tt \uu$ in~$\RR$ satisfying $\ww_\ii = \tt \vv'$, $\ww_{\ii + 1} = \ss \uu'$, and such that $\ww'$ is obtained from~$\ww$ by replacing the factor $\INV{\ww_\ii} \ww_{\ii + 1}$, which is $\INV{\vv'} \, \INV\tt \ss \uu'$, with $\INV{\vv'} \uu \INV{\vv} \uu'$. Now, if we define $\ww'_\kk := \ww_\kk$ for $\kk < \ii$, $\ww'_\ii := \vv'$, $\ww'_{\ii + 1} := \uu$, $\ww'_{\ii + 2} := \vv$, $\ww'_{\ii + 3} := \uu'$, and $\ww'_{\kk + 2} := \ww_\kk$ for $\kk > \ii + 1$, then we have $\ww' = \ww'_1 \INV{\ww'_2} \ww'_3 \INV{\ww'_4} \pdots$, whereas, by very definition, the words $\ww'_1 \wdots \ww'_{\nn + 2}$ witness that $\ww'$ represents the multifraction $\aav \act \Redbrk{\ii, \ss, \tt}$. If all words~$\uu, \vv, \uu', \vv'$ are nonempty, then the representation is sharp, and taking $\bbv := \aav \act \Redbrk{\ii, \ss, \tt}$ gives the result. Otherwise, we restore sharpness by inserting trimming steps. Assume for instance that $\uu$ is empty, but $\uu'$ and $\vv\vv'$ are not. Then $\aav \act \Redbrk{\ii, \ss, \tt}$ is eligible for~$\Redtrim\ii$, and $\ww'$ sharply represents $\bbv := \aav \act \Redbrk{\ii, \ss, \tt}\Redtrim\ii$. The other cases are treated similarly, possibly using several trimmings. Note that free reduction, which corresponds to~$\ss = \tt$ and $\uu = \vv = \ew$, is not special. Finally, the argument for left-reversing is similar, with $\ii$ now necessarily odd.
\end{proof}

We can now complete the argument.

\begin{proof}[Proof of Proposition~\ref{P:H}]
As said above, it only remains to establish the equivalence of~\ITEM2 and~\ITEM3. Suppose first that $\SD\MM$ is semi-convergent, and let $\ww$ be a word in $\SS \cup \SSb$ representing~$1$ in~$\EG\MM$. Let $\ww_1 \wdots \ww_\nn$ be words in~$\SS^*$ satisfying $\ww = \ww_1 \INV{\ww_2} \ww_3 \pdots$, and let $\aav:= \clp{\ww_1} \sdots \clp{\ww_\nn}$. Then we find 
\begin{equation}\label{E:Conn}
\can(\aav) = \can(\clp{\ww_1}) \can(\clp{\ww_2})\inv \can(\clp{\ww_3}) \pdots = \cl{\ww_1} \cl{\ww_2}\inv \cl{\ww_3} \pdots = \cl{\ww},
\end{equation}
so $\aav$ represents~$1$ in~$\EG\MM$. By hypothesis, this implies $\aav \rdsp \one$. By Lemma~\ref{L:Connection1}, we must have $\ww \spec \ww'$ for some word~$\ww'$ in $\SS \cup \SSb$ representing~$\one$. The word~$\ww'$ must be the empty word, so we have $\ww \spec \ew$. Thus \ITEM2 implies~\ITEM3.

Suppose now that Property~$\PropH$ is satisfied for~$(\SS, \RR)$, and let $\aav$ be a multifraction satisfying $\can(\aav) = 1$. First, by applying as many trim steps~$\Redtrim\ii$ as possible to~$\aav$, we obtain $\aav \rdsp^* \aav'$ for some multifraction~$\aav'$ satisfying $\aa'_\ii \not= 1$ for $1 < \ii < \dh{\aav'}$. Now let $\ww$ be a word in $\SS \cup \SSb$ representing~$\aav'$. The conditions $\aa'_\ii \not= 1$ imply that $\ww$ sharply represents~$\aav'$. By construction, $\can(\aav') = 1$ holds in~$\EG\MM$, and, therefore, by the computation of~\eqref{E:Conn}, $\ww$ represents~$1$ in~$\EG\MM$. By Property~$\PropH$, we must have $\ww \spec \ew$. By Lemma~\ref{L:Connection2}, there exists a multifraction~$\bbv$ sharply represented by~$\ew$. The latter is necessarily of the form~$\one$, implying $\aav' \rdsp \bbv$, whence $\aav \rdsp \one$. Thus \ITEM3 implies~\ITEM2.
\end{proof}

It was noted in~\cite{Dia} that Property~$\PropH$ need not imply the decidability of the word problem in general; Proposition~\ref{P:H} precisely measures the gap between the two properties: by Proposition~\ref{P:PaddedSC2WP}, the decidability of the word problem follows from the semi-convergence of $\RRR$-reduction up to a Turing-computable padding, whereas Property~$\PropH$ entails no effective upper bound on the padding that is needed.

\begin{rema}
As stated above, Property~$\PropH$ makes sense only for those gcd-monoids that admit a presentation that is both a right-lcm and a left-lcm presenta\-tion---as is the case for the standard presentation of Artin--Tits groups. If $\MM$ is a strongly noetherian gcd-monoid with atom set~$\SS$, then $\MM$ admits a right-lcm presentation~$(\SS, \RR)$ and a left-lcm presentation~$(\SS, \RRt)$ which need not coincide. Then Property~$\PropH$ can be stated for the double presentation~$(\SS, \RR, \RRt)$, using~$\RR$ for right-reversing and~$\RRt$ for left-reversing, and all results stated above extend without change to this general version.
\end{rema}

\section{Artin--Tits groups of sufficiently large type}
\label{sec:sufflarge}

We now investigate a special family of gcd-monoids, namely Artin--Tits monoids of sufficiently large type, for which we establish the padded semi-convergence result stated as Theorem~1.

A presentation~$(\SS, \RR)$ of a monoid~$\MM$ is an \emph{Artin--Tits presentation} if, for each pair~$(\ss, \tt)$ in~$\SS \times \SS$ with $\ss \neq \tt$, the set~$\RR$ contains at most one relation, and that relation has the form $\ss\tt\pdots = \tt\ss\pdots$, relating two alternating products of length $\mst$, for some integer $\mst \geq 2$. If $\mst=2$, then $(\ss,\tt)$ is called a \emph{commuting pair}, and if there is no such relation between $\ss$ and $\tt$, then $(\ss,\tt)$ is called  a \emph{free pair}, and we define $\mst=\infty$. The associated \emph{Coxeter diagram} is a complete graph with vertex set  $\SS$, and a label $\mst$ on the edge joining $\ss$ and $\tt$. A monoid $\MM$ is called an Artin--Tits monoid if it possesses an  Artin--Tits presentation $(\SS,\RR)$. We say that $\MM$ and its enveloping group $\EG\MM$ are of {\em sufficiently large type} if, in any triangle in the associated Coxeter diagram, either no edge has label $2$, or all three edges have label $2$, or at least one edge has label $\infty$.

Since all relations in $\RR$ have the form $\uu = \vv$, where $\uu$ and~$\vv$ are positive words of the same length, we see that the word-length map~$\wl$ induces a well-defined map on the presented monoid, verifying strong noetherianity. We use $\wl$ also to denote the induced map on the monoid and, for $\aa \in \MM$, we call $\wl(\aa)$ the \emph{word-length} of~$\aa$. We extend~$\wl$ to multifractions by~$\wl(\aav) := \sum_\ii \wl(\aa_\ii)$, and call~$\wl(\aav)$ the \emph{word-length} of~$\aav$. 

\subsection{The result}\label{sec:result}

The main result that we shall prove is:

\begin{prop}\label{P:Main}
If $\MM$ is an Artin--Tits monoid of sufficiently large type and $\aav$ is a multifraction of word-length~$\ell$ that represents~$1$ in~$\EG\MM$, then we have $\One{2\pp} /\aav \rds \one$ with $\pp = 3 \ell (\ell+2)/4$.
\end{prop}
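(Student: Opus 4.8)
The plan is to reduce Proposition~\ref{P:Main} to Property~$\PropH$ for the standard Artin--Tits presentation~$(\SS, \RR)$, quantified with an explicit bound on the padding that is needed. By Proposition~\ref{P:H}, Property~$\PropH$ for~$(\SS, \RR)$ is equivalent to the semi-convergence of $\SSS$-reduction, which by Proposition~\ref{P:Part} is equivalent to the padded semi-convergence of $\RRR$-reduction; the content that remains is to track the number of padding entries as a function of the word-length~$\ell$ of the input multifraction~$\aav$. So first I would start from a word~$\ww$ in $\SS \cup \SSb$ that sharply represents~$\aav$ (after trimming), of letter-length at most~$\ell$, and invoke the effective version of Property~$\PropH$ for sufficiently large type proved in~\cite{GR} and~\cite{HR2}: that $\cl\ww = 1$ implies $\ww \spec \ew$ \emph{through a sequence of special transformations whose total cost is controlled by~$\ell$}. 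The key quantitative input is that the GR/HR2 analysis yields a bound — which I expect to be quadratic, of order~$\ell^2$ — on the number of right/left reversing steps (equivalently, on the depth growth) along \emph{some} such derivation $\ww \spec \ew$.

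Next I would translate that derivation into a split-reduction sequence via Lemma~\ref{L:Connection2}: each special transformation on~$\ww$ becomes a bounded number of $\Redbrk{}$ and $\Redtrim{}$ steps on the sharply-represented multifractions, so $\aav \rdsp^* \one$ in a number of steps bounded by (a constant times) the number of special transformations, and — crucially — with depth bounded throughout by~$\dh\aav$ plus twice the number of reversing steps, hence by something of order~$\ell^2$. Then I would feed this $\SSS$-reduction to Lemma~\ref{L:Part}\ITEM2, which converts $\aav \rdsp^* \one$ into $\One{2\pp}/\aav \rds \one$ for \emph{some}~$\pp$; re-examining the proof of that lemma, the value of~$\pp$ produced is exactly (half of) the accumulated depth increases along the $\SSS$-derivation — each $\Redbrk{}$ step contributes $2$ to~$2\pp$ and the inductive bookkeeping in the $\mm \ge 2$ case just sums these contributions. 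Therefore $\pp$ is bounded by the total number of $\Redbrk{}$ steps, which is bounded by the number of reversing steps in the GR/HR2 derivation. Pinning down the constant so that $\pp = 3\ell(\ell+2)/4$ comes out will require carefully counting: how many reversing steps the sufficiently-large-type argument uses on a word of length~$\ell$ split into $n$ fractions, how many $\Redbrk{}$ steps each reversing step costs in Lemma~\ref{L:Connection2}, and accounting for the fact that a single reversing step of~$\ss\vv = \tt\uu$ may involve words~$\uu,\vv$ of length up to $\mst - 1$, so the depth can grow by more than~$2$ per combinatorial move when one unpacks it into elementary $\Redbrk{}$'s with atom parameters.

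The main obstacle is the explicit constant, and this is where I expect the real work to lie: the abstract equivalences (Propositions~\ref{P:H}, \ref{P:Part} and Lemmas~\ref{L:Part}, \ref{L:Connection1}, \ref{L:Connection2}) give \emph{some} polynomial bound essentially for free, but extracting the precise quadratic $3\ell(\ell+2)/4$ forces one to open up the geodesic/van~Kampen-diagram machinery of~\cite{HR2} and~\cite{GR} and count reversing steps by hand. Concretely, I anticipate an argument of the following shape: a word of word-length~$\ell$ representing~$1$ bounds a van~Kampen diagram over the Artin--Tits presentation whose number of ``cells'' is $O(\ell^2)$ by the sufficiently-large-type isoperimetric estimate; each cell corresponds to applying one defining relation, i.e. one reversing-type move; the alternating-product relations of length~$\mst$ contribute their length to the word-length budget, so $\sum \mst \le$ (something linear in~$\ell$) across the diagram; combining these gives the depth increase, and hence~$2\pp$, as $3\ell(\ell+2)/4$ after optimising the constants. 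I would expect the proof in the paper to phrase all of this not via van~Kampen diagrams directly but through the intermediate combinatorial objects (``trim'' multifractions, sharp representations, and the reversing grids of Lemma~\ref{L:Lcm}) already set up in Section~\ref{SSPropH}, so that Proposition~\ref{P:Main} becomes a bookkeeping corollary of a sharpened, effective restatement of the GR/HR2 theorem — and Theorem~1 then follows immediately since $3\ell(\ell+2)/4$ is a Turing-computable (indeed polynomial) function of~$\ell = \wl(\aav)$.
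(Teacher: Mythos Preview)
Your route is genuinely different from the paper's, and the difference matters for the constant. You propose to go \emph{through} the abstract equivalences of Proposition~\ref{P:H} and Lemma~\ref{L:Part}: first extract from~\cite{GR,HR2} an effective bound on the number of reversing steps in some derivation $\ww \spec \ew$, then push that bound through Lemma~\ref{L:Connection2} to $\SSS$-reduction, and finally through Lemma~\ref{L:Part}\ITEM2 to obtain~$\pp$. The paper does none of this. It bypasses Property~$\PropH$ and split reduction entirely and instead simulates the $\tau^+$-moves of the Holt--Rees rewrite system \emph{directly inside ordinary $\RRR$-reduction} on a padded multifraction. The key technical step (Proposition~\ref{P:MainProp}) is: if $\aav$ has word-length~$\ell$ and is not geodesic, then $\One{6\ell}/\aav \rds \bbv$ for some~$\bbv$ of word-length~$\ell-2$. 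One pads by~$2k$ to isolate the $k$ critical subwords $w_1,\dots,w_k$ of a leftward length-reducing factorisation into separate entries, then uses ad~hoc lemmas about fractional normal forms in the \emph{two-generator} parabolic submonoids (Lemmas~\ref{lem:swap_forms} and~\ref{lem:to_geo}) to realise each $\tau^+$-move as a short sequence of $\Red{\ii,\xx}$ steps. The final free cancellation is a single reduction, and Lemma~\ref{lem:to_geo} costs at most $4|v_i|$ padding per subword to rewrite back to a geodesic; summing gives the~$6\ell$. Iterating over word-lengths $\ell,\ell-2,\dots,2$ and summing $3\ell' $ over these yields $\pp = 3\ell(\ell+2)/4$ exactly.

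Your approach is not wrong in spirit, but it has a real gap at the point you yourself flag: neither~\cite{GR} nor~\cite{HR2} states a bound on the number of reversing steps in a special-transformation derivation $\ww \spec \ew$, and the van~Kampen/isoperimetric heuristic you sketch is not how those papers work (they use critical sequences and $\tau$-moves, not area estimates). Even granting such a bound, threading it through Lemmas~\ref{L:Connection2} and~\ref{L:Part}\ITEM2 accumulates constants at each stage and would almost certainly not recover $3\ell(\ell+2)/4$ on the nose. The paper's direct simulation buys precisely this: by never leaving $\RRR$-reduction, the padding is counted exactly where it is spent.
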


Note that, since all of the defining relators of the Artin-Tits groups have even length, a word in $\SS \cup \SSb$ that represents the identity must have even length, so $\ell$ must be even in the above proposition.

So, $\RRR$-reduction is semi-convergent for every Artin--Tits monoid of sufficiently large type up to a quadratic padding. As the square of word-length is obviously Turing-computable, Proposition~\ref{P:Main} verifies that $\MM$ satisfies Conjecture~$\ConjAp$, and therefore that the word problem of the associated Artin--Tits group is decidable (as was known already). By Proposition~\ref{P:H}, we also obtain an alternative proof to the main result of~\cite{GR}, namely that the standard presentation of Artin-Tits groups of sufficiently large type satisfies Property~$\PropH$.

Proposition~\ref{P:Main} follows from directly the following technical result.
We say that a multifraction~$\aav$ is \emph{geodesic} if its word-length is minimal over all multifractions that represent the same group element.

\begin{prop}\label{P:MainProp}
If $\MM$ is an Artin--Tits monoid of sufficiently large type and $\aav$ is a multifraction of word-length~$\ell$ that is not geodesic, then $\One{6\ell}/\ua$ is $\RRR$-reducible to a multifraction of word-length $\ell -2$. 
\end{prop}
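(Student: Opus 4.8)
The plan is to reduce everything to a local, single-step statement about $\SSS$-reduction and then translate back to $\RRR$-reduction via Lemma~\ref{L:Part}. Specifically, I would first observe that it suffices to show the following: if $\aav$ is a non-geodesic multifraction of word-length~$\ell$, then there is an $\SSS$-reduction sequence $\aav \rdsp^* \bbv$ of length at most~$3\ell$ (the bound being the number of $\Redbrk{\ii,\xx,\yy}$-steps used, each of which raises the depth by~$2$), where $\bbv$ has word-length $\ell-2$. Indeed, by Lemma~\ref{L:Part}\ITEM2, $\aav \rdsp^* \bbv$ implies $\One{2\pp}/\aav \rds \bbv/\One{2\qq}$ for some~$\pp,\qq$; tracking the proof of that lemma, each $\SSS$-step contributes a bounded number of $1$s on the left, and a sequence of at most $3\ell$ split-steps yields $\pp \le 3\ell$, which gives the claimed $\One{6\ell}/\aav$. (Trimming steps do not increase $\pp$, so they are free.)

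So the core is: \emph{on any non-geodesic multifraction of word-length $\ell$, split reduction can decrease word-length by~$2$ in at most $3\ell$ non-trimming steps.} Here I would invoke the machinery of~\cite{HR2} and~\cite{GR}. Via Definition~\ref{def:represents} and the equivalence of~\ITEM2 and~\ITEM3 in Proposition~\ref{P:H}, a non-geodesic multifraction corresponds to a word~$\ww$ in $\SS \cup \SSb$ that is not geodesic in the group, hence by the work of Holt--Rees on sufficiently large type there is a \emph{length-reducing} sequence of special transformations (positive/negative equivalences, right- and left-reversings) taking $\ww$ to a shorter word. The content of~\cite{HR2} and~\cite{GR} is precisely that in the sufficiently-large-type case such length-reducing sequences exist and, crucially, can be bounded: one needs the quantitative refinement that the number of reversing steps required is linear in~$\ell$. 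Then Lemma~\ref{L:Connection2} converts this bounded special-transformation sequence on~$\ww$ into a bounded $\SSS$-reduction sequence on~$\aav$, with the number of $\Redbrk{}$-steps matching the number of reversing steps, hence at most~$3\ell$.

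The proof would therefore proceed in the following order. First, pass from~$\aav$ to a sharply-representing word~$\ww$ (absorbing the trivial interior entries by trimmings, which cost nothing in the padding count). Second, apply the structural analysis of sufficiently-large-type Artin--Tits groups from~\cite{HR2}: examine how a non-geodesic word admits a length-reducing rewriting, paying attention to the triangle condition (no $2$-edge, or all $2$-edges, or an $\infty$-edge) which is exactly what rules out the obstructions to Dehn-like behaviour; extract from this the existence of a $\spec$-path from~$\ww$ to a word of length $\ell-2$ using at most~$3\ell$ reversing steps. Third, apply Lemma~\ref{L:Connection2} to lift this path to $\aav \rdsp^* \bbv$ with $\dh\bbv = \dh\aav + 2\cdot(\#\text{reversings})$ and $\wl(\bbv)=\ell-2$. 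Fourth, apply Lemma~\ref{L:Part}\ITEM2, carefully re-examining its proof to read off $\pp \le 3\ell$ from the number of $\SSS$-steps, to conclude $\One{6\ell}/\aav \rds \bbv$.

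The main obstacle is the \emph{quantitative} step: Lemma~\ref{L:Connection2} and Proposition~\ref{P:H} only give the qualitative equivalence of semi-convergence with Property~$\PropH$, with ``no effective upper bound on the padding,'' as the text explicitly warns. So I cannot simply cite~\cite{GR}; I must go back into the geometric combinatorics of~\cite{HR2}—the analysis of van Kampen diagrams or of the interaction of reversing with the $\mst$-relations in the sufficiently-large-type case—and prove that a \emph{single} word-length-$2$ reduction is achievable with only linearly many reversings. This is where the careful bookkeeping lives: one must show that the ``diagram'' exhibiting non-geodesity, which a~priori could be large, can be traversed by a length-reducing special sequence whose reversing count is controlled by~$\ell$ rather than by the (unbounded) area of the diagram. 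I expect this to require a peak-reduction or innermost-$2$-cell argument specific to the triangle condition, and it is the real technical heart of Proposition~\ref{P:MainProp}.
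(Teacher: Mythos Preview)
Your route via $\SSS$-reduction and Lemma~\ref{L:Part} is not how the paper proceeds, and it contains a real gap at exactly the point you flag as ``the real technical heart''. The difficulty is the bound of $3\ell$ on the number of reversing steps. Lemma~\ref{L:Connection2} converts a $\spec$-path into $\SSS$-steps \emph{one elementary reversing at a time}: each right-reversing $\INV\ss\tt \to \vv\INV\uu$ becomes a single $\Redbrk{\ii,\ss,\tt}$ with $\ss,\tt \in \SS$. But the length-reducing procedure of~\cite{HR2} is built out of $\tau^+$-moves, each of which rewrites an entire $2$-generator critical subword; realising one $\tau^+$-move as a sequence of elementary reversings costs roughly the area of a dihedral reversing grid, which is quadratic in the length of that subword, not constant. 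Summing over the critical sequence gives no linear bound on reversing steps, and hence no linear bound on~$\pp$ via your reading of Lemma~\ref{L:Part}\ITEM2. Neither~\cite{HR2} nor~\cite{GR} provides such a reversing count, and a ``peak-reduction or innermost-$2$-cell'' argument will not produce one, because the obstruction is already present in a single rank-$2$ parabolic.

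The paper avoids this entirely by working \emph{directly} with $\RRR$-reduction and never passing through $\SSS$-reduction or Lemma~\ref{L:Part}. The key point you are missing is that a single rule~$\Red{\ii,\xx}$ allows $\xx$ to be an arbitrary element of~$\MM$, not just a generator: Lemma~\ref{lem:swap_forms} shows that one $\RRR$-step swaps an entire $2$-generator fractional normal form $\cc\inv\dd$ for $\aa\bb\inv$ (or vice versa), which is precisely what a $\tau^+$-move does. So each $\tau^+$-move in the critical sequence is simulated by $O(1)$ $\RRR$-steps, and the padding is consumed not by Lemma~\ref{L:Part} but by two explicit mechanisms: Lemma~\ref{lem:move_to_left} (using $\One{2k}$ to insert slots separating the $k$ critical subwords) and Lemma~\ref{lem:to_geo} (using $\One{4\vert v_\ii\vert}$ to unravel each resulting fractional form back to a geodesic). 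Since $k \le \ell$ and $\sum_\ii \vert v_\ii\vert \le \ell$, this yields the $6\ell$ bound. Your strategy could be salvaged by abandoning Lemma~\ref{L:Connection2} and instead counting ``big'' $\SSS$-steps $\Redbrk{\ii,\xx,\yy}$ with $\xx,\yy \in \MM$ arbitrary, but at that point you are essentially reproducing the paper's direct argument with an extra detour.
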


The proof makes use of the existence of an effective rewrite system for  Artin--Tits groups of sufficiently large type that is defined and explored in \cite{HR1,HR2}, most of whose rules involve two generators only.  We show that  appropriate sequences of rules from that system can be simulated within multifraction reduction to reduce  $\One{6\ell}/\ua$ as required.

The following subsection, Subsection~\ref{sec:strategy}, explains our strategy. Further technical results are established in Subsection~\ref{sec:technical}, and then the proof given in Subsection~\ref{sec:proof}.

\subsection{Strategy for the proof of Proposition~\ref{P:MainProp}}\label{sec:strategy}

For the rest of this section, we assume that $\MM$ is an Artin--Tits monoid of sufficiently large type, and
that $(\SS,\RR)$ is the associated presentation.

We shall refer to the elements of $\SS$ as
{\em generators} and to those of $\SS \cup \SSb$ as {\em letters}.
For $\ss \in \SS$, we call $\ss$ the {\em name} of both of
the letters $\ss$ and $\INV\ss$. We use $\f{\ww}$ and $\l{\ww}$ to denote the
first and last letter of a word $\ww$ in $\SS \cup \SSb$.
Furthermore, in situations where an element~$\aa$ of~$\MM$ is represented
by a unique word $\ww$ in $\SS$, we use $\f{\aa}$ and $\l{\aa}$ also to
denote the first and last letters of $\ww$. 

The rewrite system described in \cite{HR1,HR2} is based on two kinds of length 
preserving rules relating words, $\tau$-moves and $\kappa$-moves, together with
free reduction. By definition, a $\tau$-move rewrites a certain type of 2-generator subword
$w$ on non-commuting non-free generators to another word $\tau(w)$ on the same
two generators, where $w$ and $\tau(w)$ have the same length and represent
the same group element. A $\kappa$-move applies commutation relations to
subwords on two or more generators. In this article, in order to simplify the
exposition, we choose to break $\kappa$-moves down as sequences
of $\kappa$-moves on 2-generator subwords, and we introduce the term
$\tau^+$-move to mean either a $\tau$-move or
a $\kappa$-move applied to a 2-generator subword.

So $\tau^+$-moves can be applied only to certain geodesic 2-generator words,
which we call {\em critical} words. When $v$ is critical, then $\tau^+(v)$ is
also critical, on the same set of generators. 
In all cases, $\f{v}$ and $\f{\tau^+(v)}$ have distinct
names, as do $\l{v}$ and $\l{\tau^+(v)}$. Furthermore, if $v$ is neither a
positive nor a negative word, then exactly one of $\f{v}$ and $\f{\tau^+(v)}$
lies in $\SS$, and the same is true for $\l{v}$ and $\l{\tau^+(v)}$.

Suppose that $w$ is a freely reduced non-geodesic word in~$\SS \cup \SSb$. Then, by \cite[Proposition 3.2]{HR2}, $w$ admits a so-called {\em leftward length reducing critical sequence}. This consists of a certain type of factorisation (see \cite{HR2} for a precise definition) $w = \alpha w_1 w_2 \pdots w_k \beta$ for some $k \ge 1$, where $w_1 \wdots w_k$ are geodesic $2$-generator subwords of $w$, and $w_k$ is critical. We can locate such a decomposition, if it exists, by a finite search.

To perform the length reduction, as described in \cite{HR2}, we first replace $w_k$ by $\tau^+(w_k)$. Then, if $k>1$, we replace $w'_{k-1}$ by $\tau^+(w'_{k-1})$, where $w'_{k-1} := w_{k-1}\f{\tau^+(w_k)}$. We continue in this manner, finally replacing $w'_1$ by $\tau^+(w'_1)$, where $w'_1 := w_1\f{\tau^+(w_2)}$. At that stage, we have $\l{\alpha} = \INV{\f{\tau^+(w'_1)}}$, and we can freely cancel these two adjacent letters.

To mimic the length reduction summarised above, we need to simulate the replacement of $w_k$ by $\tau^+(w_k)$. If $w_k$ is a positive or a negative word then, by definition, $\tau^+(w_k)$ is another such word representing the same group element, with $\f{\tau^+(w_k)} \neq \f{w_k}$. In that case, the only multifraction operations that we may need to carry out are moves that ensure that the part of the multifraction that is represented by $w_k$ occupies a single component.

When $w_k$ is neither a positive nor a negative word, the simulation of the replacement of $w_k$ by $\tau^+(w_k)$ is generally carried out in several stages. Initially, a certain part of the multifraction, which may be spread across several components, is represented by the word $w_k$.  We use $\RRR$-reduction (if necessary) to adjust that part of the multifraction so that it occupies exactly two adjacent components, numbered $i$ and $i+1$ for some $i$, and represents a word $v_k$ with $\cl{w}=\cl{v_k}$. Furthermore, if $\f{w_k} \in \SS$ (i.e. $w_k$ starts with a positive letter), then $i$ is odd and $v_k$ is a concatenation of nonempty positive and negative words, whereas if $\f{w_k} \in \SSb$, then $i$ is even and $v_k$ is a concatenation of nonempty negative and positive words. We then carry out a single $\RRR$-reduction, after which the part of the multifraction in question occupies components $i-1$ and $i-2$, and represents a word $u'_k$ with $\cl{u'_k}=\cl{v_k}$, which is a concatenation of negative and positive words in the first case, and of positive and negative words in the second. 

Let $u_k$ be the maximal proper suffix of $u_k'$. We shall show later in Lemma~\ref{lem:sameletter} that $\f{u_k'}= \f{\tau^+(w_k)}$, so $u_k$ represents the same group element as $\f{\tau^+(w_k)}^{-1}\tau^+(w_k)$. Then, for $k>1$, we adjoin $\f{u_k'}$ to $w_{k-1}$ to give the word $w_{k-1}'$ defined above.

We repeat the above procedure on $w_{k-1}'$ to define words $u'_{k-1}$, $u_{k-1}$ and then, provided that $k>2$, to define $w'_{k-2}$, $u'_{k-2}$, $u_{k-2}$, ..., $w'_1$, $u'_1$, $u_1$. Then the first letter $\f{u'_1}$ of $u'_1$ freely reduces with the final letter $\l{\alpha}$ of $\alpha$, and this cancellation can be effected by a single $\RRR$-reduction.

So the word has effectively been transformed to $\alpha'u_1 \pdots u_k \beta$, where $\alpha'$ is the maximal prefix of~$\alpha$. Now we use $\RRR$-reduction to simulate the replacement of each $u_i$ by a $2$-generator geodesic $v_i$. The resulting word $v = \alpha' v_1 \pdots v_k \beta$, where $|\alpha'| = |\alpha| - 1$, $|v_k| = |w_k|-1$ and, for $1 \le i < k$, $|v_i| = |w_i|$, is the same as the word resulting from the application of sequence of $\tau^+$-moves to $w$ followed by the free cancellation. So we have successfully used $\RRR$-reduction to simulate the reduction resulting from the leftward critical sequence, and reduce the length of the word by~2.

\subsection{Technical results for two-generator Artin--Tits monoids}\label{sec:technical}

In order to simulate the $\tau^+$ transformations, we need a few preliminary results. By definition, the elements involved in the transformations belong to a non-free two-generator submonoid of the ambient monoid, and we establish here various technical statements involving the elements of such submonoids and their enveloping groups. 

The first point is that every element in the group admits distinguished fractional decompositions.

\begin{prop}\label{prop:length2}
If $\MMt$ is a non-free two-generator Artin--Tits monoid, then every element of~$\EG\MMt$ admits a unique expression $\aa \bb\inv$ with $\aa, \bb$ in~$\MMt$ and $\aa \gcdt \bb = 1$, and a unique expression $\cc\inv \dd$ with $\cc, \dd$ in~$\MMt$ and $\cc \gcd \dd = 1$.
\end{prop}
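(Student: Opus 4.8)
The plan is to establish existence and uniqueness of the two normal forms separately, using the well-known structure of two-generator Artin--Tits monoids. First I would set up notation: write $\MMt = \MON{\ss,\tt}{\PROD_m(\ss,\tt) = \PROD_m(\tt,\ss)}$ where $m = \mst \in \{2,3,\dots,\infty\}$ is finite (since $\MMt$ is non-free), and recall the standard facts about such monoids---they are cancellative gcd-monoids (indeed Garside when $m < \infty$, since the dihedral Artin groups are of spherical type), and any two elements admit a right-lcm and a left-lcm. Only the statement about divisibility and lcm/gcd is needed, not the full excerpt machinery.

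For existence of the expression $\aa\bb\inv$ with $\aa \gcdt \bb = 1$: given $\gg \in \EG\MMt$, write $\gg = \aa_0 \bb_0\inv$ for some $\aa_0, \bb_0 \in \MMt$ (possible because $\MMt$ embeds in its enveloping group---dihedral Artin monoids satisfy Ore's conditions, so this is automatic, or one can cite the general theory). Let $\dd$ be the right-gcd $\aa_0 \gcdt \bb_0$, so $\aa_0 = \aa\dd$ and $\bb_0 = \bb\dd$ with $\aa \gcdt \bb = 1$ by cancellativity; then $\gg = \aa\dd(\bb\dd)\inv = \aa\bb\inv$. Symmetrically one gets $\gg = \cc\inv\dd$ with $\cc \gcd \dd = 1$ by peeling off the left-gcd. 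The uniqueness is the real content: suppose $\aa\bb\inv = \aa'\bb'\inv$ with $\aa\gcdt\bb = \aa'\gcdt\bb' = 1$. Then $\aa\bb' = \aa'\bb$ inside $\MMt$ (using the embedding and cancellativity), so this common element is a common left-multiple of $\aa$ and $\aa'$; let $\ee = \aa\lcm\aa'$ be the left-lcm, say $\aa\bb' = \aa'\bb = \ee\ff$ for some $\ff$. Writing $\ee = \aa\pp = \aa'\pp'$, cancellativity gives $\bb' = \pp\ff$ and $\bb = \pp'\ff$. Now $\pp'$ right-divides both $\aa'$ (trivially, $\aa' = \ee/\pp'$... more precisely $\pp'$ left-divides... ) --- here I would instead argue that $\pp$ is a common right-divisor: $\pp$ right-divides $\bb'$ and, since $\ee = \aa'\pp'$ and $\ee$ is the left-lcm of $\aa$ and $\aa'$, the complement $\pp$ of $\aa$ in $\ee$ must be coprime in the appropriate sense to the complement of $\aa'$; combined with $\aa\gcdt\bb = 1$ one forces $\pp = 1$, hence $\ee = \aa = \aa'$ and then $\bb = \bb'$. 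The left-sided statement is symmetric.

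The main obstacle I anticipate is making the coprimality argument in the uniqueness proof clean: one must show that from $\aa\bb' = \aa'\bb$ with $\aa\gcdt\bb = \aa'\gcdt\bb' = 1$ it follows that $\aa = \aa'$, which amounts to the statement that the right-lcm/left-gcd structure on $\MMt$ is lattice-like enough that $\gg \mapsto (\aa,\bb)$ is well-defined. In a general gcd-monoid this can fail, so the proof must genuinely use that $\MMt$ is a two-generator Artin--Tits monoid---either by invoking the spherical-type Garside structure when $m < \infty$ (where $\aa\bb\inv$ in lowest terms is exactly the standard symmetric normal form and uniqueness is classical), and handling $m = \infty$ separately (but then $\MMt$ is free, which is excluded by hypothesis, so $m < \infty$ always). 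So in fact the cleanest route is: observe that non-free two-generator means $m$ finite means $\MMt$ is a Garside monoid, then cite the classical symmetric (left and right) normal form theory for Garside groups, which gives both existence and uniqueness of $\aa\bb\inv$ with $\aa\gcdt\bb=1$ and of $\cc\inv\dd$ with $\cc\gcd\dd=1$ directly. I would present the Garside-theoretic argument as the main line and mention the elementary lattice argument as the underlying reason.
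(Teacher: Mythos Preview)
Your overall strategy matches the paper: observe that non-free two-generator means the monoid is Garside (spherical dihedral type), invoke Ore's theorem for the existence of fraction expressions, divide by the appropriate gcd to reduce, and appeal to standard Garside normal-form theory for uniqueness. Your fallback plan---``the cleanest route is to cite the symmetric normal form theory for Garside groups''---is correct and is essentially what the paper does: it cites the relevant lcm lemma from~\cite{ECHLPT} rather than the full normal-form machinery.

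Your attempted \emph{direct} uniqueness argument, however, contains a real error. From $\aa\bb\inv = \aa'\bb'{}\inv$ in the group you write ``then $\aa\bb' = \aa'\bb$ inside~$\MMt$'', but this cross-multiplication is only valid in a commutative setting; in~$\MMt$ it fails (with $\ss,\tt$ the generators and $\mst\ge3$, take $\aa=\bb=\ss$ and $\aa'=\bb'=\tt$: both fractions equal~$1$, yet $\ss\tt \ne \tt\ss$). What the Ore equivalence actually yields is $\uu,\vv\in\MMt$ with $\aa\uu=\aa'\vv$ and $\bb\uu=\bb'\vv$, and your subsequent lcm manipulation does not recover from this. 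The paper sidesteps the issue by picking \emph{any} left-fraction expression $\gg = \cc\inv\dd$, so that $\cc\aa = \dd\bb$ and $\cc\aa' = \dd\bb'$ genuinely hold in~$\MMt$; the cited lemma (if $\aa \gcdt \bb = 1$ and $\cc\aa = \dd\bb$, then $\cc\aa = \cc \lcm \dd$) then forces $\cc\aa = \cc\aa'$ and $\dd\bb = \dd\bb'$, whence $\aa=\aa'$ and $\bb=\bb'$ by cancellation. Note that this lemma is a general gcd-monoid fact, so your worry that uniqueness might fail without the two-generator hypothesis is slightly misplaced: it is \emph{existence} (the Ore condition) that genuinely needs the Garside structure, not uniqueness.
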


\begin{proof}
The lcm~$\Delta$ of the two generators is a Garside element in~$\MMt$ and,
as a consequence, any two elements of~$\MMt$ admit a common right-multiple
and a common left-multiple. By the classical Ore's theorem, this implies that
every element of~$\EG\MMt$ can be expressed as a right fraction~$\aa\bb\inv$
and as a left fraction~$\cc\inv \dd$. As any two elements of~$\MMt$ admit a
left gcd and a right gcd, dividing the numerator and the denominator of a
right (\resp left) fraction by their right gcd (\resp left gcd) yields
irreducible fractions. Uniqueness follows from the fact that, if the right
gcd of~$\aa$ and~$\bb$ is trivial, then $\cc\aa$ is the right lcm
of~$\cc$ and~$\dd$ whenever we have $\cc \aa = \dd \bb$
(see for instance \cite[Lemma~9.3.5]{ECHLPT}).
\end{proof}

In the above situation, we shall call $\aa\bb^{-1}$ and $\cc^{-1}\dd$ the
\emph{right} and \emph{left} {\em fractional normal forms} of the associated
element.

The next point is that, under mild assumptions, the numerator and the denominator of these fractional normal forms are represented by unique words in the generators:

\begin{lemm}\label{cor:uniquewd}
If $\MMt$ is a non-free two-generator Artin--Tits monoid, and $\aa\bb\inv$ and $\cc\inv\dd$ are the fractional normal forms of an element~$\gg$ of~$\EG\MMt$ and, moreover, $\aa$ and $\bb$ (or $\cc$ and $\dd$) are both $\not= 1$, then none of $\aa$, $\bb$, $\cc$ or $\dd$ is divisible by the Garside element $\Delta$, and there are unique positive words in $\MMt$ representing each of them.
\end{lemm}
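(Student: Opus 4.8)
The plan is to deduce both assertions from Proposition~\ref{prop:length2} together with the structure of a two-generator Artin--Tits monoid as a Garside monoid with Garside element $\Delta = \ss \lcm \tt = \ss \lcmt \tt$. I will treat the right fractional normal form $\gg = \aa\bb\inv$ with $\aa \gcdt \bb = 1$ and $\aa, \bb \neq 1$; the left-fraction case $\cc\inv\dd$ is symmetric, exchanging left and right throughout.

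First I would show that neither $\aa$ nor $\bb$ is divisible by~$\Delta$. Since $\Delta$ is a Garside element, $\Delta = \ss \lcmt \tt$ is a common right-multiple of $\ss$ and $\tt$ that is also their \emph{left}-lcm; in particular every element divisible by $\Delta$ is right-divisible by both $\ss$ and $\tt$. Suppose $\Delta \divet \aa$ and $\Delta \divet \bb$, i.e.\ $\Delta$ right-divides both; then $\Delta$ is a common right-divisor of $\aa$ and $\bb$, contradicting $\aa \gcdt \bb = 1$. It remains to rule out $\Delta \divet$ exactly one of them, say $\Delta \divet \aa$ but $\Delta \notdive \bb$ on the right. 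Here I would argue that $\Delta$ is central up to the (order-two) diagram automorphism, so left- and right-divisibility by~$\Delta$ coincide: $\Delta \divet \aa$ forces $\Delta \dive \aa$, and then both $\ss$ and $\tt$ left-divide $\aa$, so $\Delta = \ss\lcm\tt$ left-divides $\aa$, say $\aa = \Delta \aa'$. But then $\gg = \Delta \aa' \bb\inv$, and since $\Delta$ is central (or quasi-central) in $\EG\MMt$ one can move it to obtain a different right-fractional expression; more to the point, $\Delta$ right-divides $\aa$ as well, so again $\ss$ and $\tt$ right-divide $\aa$, hence $\Delta$ right-divides $\aa$, and combined with $\gcdt$-triviality this gives $\Delta \notdive \bb$, so the fraction $\aa\bb^{-1}$ is not the normal form unless $\Delta$ can be cancelled — forcing $\Delta \notdivet \aa$ after all. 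The cleanest route is: if $\Delta \divet \aa$, write $\aa = \aa_0 \Delta$; since $\Delta = \ss\lcmt\tt$ we also have $\ss \divet \aa$ and $\tt \divet \aa$, while $\aa \gcdt \bb = 1$ gives $\ss \notdivet \bb$ or $\tt \notdivet \bb$; but in a two-generator monoid every element $\neq 1$ ends in $\ss$ or $\tt$, and one shows the denominator of the $\gcdt$-reduced right fraction of $\gg\Delta\inv$ coincides with $\bb$, which after re-reducing produces a strictly shorter fraction — contradiction with uniqueness in Proposition~\ref{prop:length2}.

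Second, granting that $\Delta$ divides none of $\aa, \bb$ on either side, I would invoke the classical fact that in a two-generator Artin--Tits monoid the elements \emph{not} left-divisible by $\Delta$ are exactly the left-divisors of $\Delta$, i.e.\ the ``simple'' elements, and each simple element other than $1$ and $\Delta$ is an unambiguous alternating product $\ss\tt\ss\cdots$ or $\tt\ss\tt\cdots$ of length $< \mst$; hence each such element is represented by a \emph{unique} positive word. Indeed, the only relation is the single braid-type relation of length $\mst$ between $\ss$ and $\tt$, and no proper divisor of $\Delta$ contains a full side of that relation as a subword, so no relation can be applied to it: its positive-word representative is unique. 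Applying this to $\aa$ and to $\bb$ finishes the right-fraction case, and the symmetric argument with $\lcm$ replaced by $\lcmt$ handles $\cc$ and $\dd$.

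\textbf{Main obstacle.} The delicate point is the middle step: excluding that $\Delta$ divides exactly one of the numerator and denominator of the normal form, and more generally pinning down ``not divisible by $\Delta$ on \emph{either} side'' from the $\gcdt$-triviality that only controls right-divisibility. I expect to handle this by using that $\Delta$ is a Garside element, so $\Delta \MMt = \MMt \Delta$ and left- and right-divisibility by $\Delta$ agree, together with the explicit observation that dividing $\aa\bb\inv$ through by a common $\Delta$ (if present) and re-reducing would contradict the uniqueness clause of Proposition~\ref{prop:length2}; once ``$\Delta$-free'' is established, the uniqueness of the positive word is the standard combinatorial fact that proper divisors of $\Delta$ admit no applicable defining relation.
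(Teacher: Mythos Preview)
Your argument for non-divisibility by~$\Delta$ is more tortuous than necessary. In your ``cleanest route'' you correctly note that if $\Delta$ right-divides $\aa$ then both $\ss$ and $\tt$ right-divide $\aa$; combined with $\aa \gcdt \bb = 1$ this forces \emph{neither} $\ss$ nor $\tt$ to right-divide $\bb$ (not merely ``$\ss$ or $\tt$'' as you write), whence $\bb = 1$, the desired contradiction. That is already the whole proof, and it is exactly the paper's one-line argument: if $\Delta$ divides $\aa$, then $\aa$ has representative words ending in each generator, so it shares a last letter with any word for $\bb \ne 1$, violating $\aa \gcdt \bb = 1$. The detours through quasi-centrality and re-reducing $\gg\Delta^{-1}$ are superfluous and, as written, do not close.

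The genuine gap is in your uniqueness step. The claim that ``the elements not left-divisible by $\Delta$ are exactly the left-divisors of $\Delta$'' is false. For $\mst = 3$, the element $\ss\ss$ is neither a left-divisor of $\Delta = \ss\tt\ss$ nor left-divisible by $\Delta$; more generally any power $\ss^\kk$ with $\kk \ge 2$ refutes the claim. Elements not divisible by $\Delta$ can have arbitrary length and are not in general simple. What is true---and what the paper obtains by citing \cite[Proposition~4.3]{MM}---is that an element~$\ee$ of a dihedral Artin--Tits monoid has a unique positive-word representative if and only if $\Delta$ does not divide~$\ee$: the single defining relation can be applied to a word only when that word contains an alternating factor of length~$\mst$, and (using $\Delta \MMt = \MMt \Delta$) some word for~$\ee$ contains such a factor precisely when $\Delta$ divides~$\ee$. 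So the conclusion holds, but your justification does not.
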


\begin{proof}
If, for example, $\aa$ were divisible by $\Delta$ then it would have a representative word with the same last letter as a representative word for~$\bb$, which would violate the condition that $\aa$ and~$\bb$ have no common right divisor. The uniqueness of the word representing $\aa$ is proved in~\cite[Proposition~4.3]{MM}.
\end{proof}

In this situation, we shall unambiguously write $\f{\aa}$ and $\l{\aa}$ for the first and last letters of the unique 2-generator word that represents $\aa$, and similarly for $\bb$, $\cc$, and~$\dd$.

Next, the generators occurring at the ends of the numerators and denominators obey some constraints:

\begin{lemm} \label{lem:letterchange}
If $\MMt$ is a non-free two-generator Artin--Tits monoid, and $\aa\bb\inv$ and $\cc\inv\dd$ are the fractional normal forms of an element~$\gg$ of~$\EG\MMt$ and, moreover, $\aa$ and $\bb$ (or $\cc$ and $\dd$) are both $\not= 1$, then we have $\f{a} \ne \l{c}$ and $\f{b} \ne \l{d}$.
\end{lemm}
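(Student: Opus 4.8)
The plan is to argue by contradiction, relying on two facts about the non-free two-generator monoid~$\MMt$: by the proof of Proposition~\ref{prop:length2}, the Garside element~$\Delta$ is the right-lcm of the two generators of~$\MMt$, and by Lemma~\ref{cor:uniquewd} each of $\aa,\bb,\cc,\dd$ is represented by a unique positive word and none of them is divisible by~$\Delta$. Recall also that the defining relation of~$\MMt$ equates the two alternating positive words $\Delta'$ and $\Delta''$ whose common value in~$\MMt$ is~$\Delta$; in particular neither $\Delta'$ nor $\Delta''$ contains two equal consecutive letters.

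First I would reduce to proving the single inequality $\f\aa\ne\l\cc$: replacing $\gg$ by~$\gg\inv$ turns the right normal form $\aa\bb\inv$ into $\bb\aa\inv$ and the left normal form $\cc\inv\dd$ into $\dd\inv\cc$, which interchanges the two inequalities to be proved. I would also note that the two alternatives in the hypothesis are equivalent --- if, say, $\cc=1$, then $\gg=\dd$ lies in~$\MMt$, so $\aa=\dd\bb$, so $\bb$ right-divides~$\aa$, so $\bb=1$ because the right gcd of $\aa$ and $\bb$ is trivial; the remaining implications are symmetric --- hence all of $\aa,\bb,\cc,\dd$ may be assumed nontrivial, and their unique words are then nonempty. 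Finally, since a word to which no defining relation of~$\MMt$ applies is the unique representative of its element, uniqueness of the word of~$\cc$ (and, likewise, of the word of~$\aa$) forces that word to contain no factor equal to $\Delta'$ or to~$\Delta''$.

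Now assume, for a contradiction, that $\f\aa=\l\cc$, say both equal the generator~$\ss$, and let $w_\cc$ and $w_\aa$ be the unique (nonempty) words of $\cc$ and~$\aa$. As $w_\cc$ ends in~$\ss$ and $w_\aa$ starts with~$\ss$, the concatenation $w_\cc w_\aa$ contains the factor $\ss\ss$ at its seam; so any factor of $w_\cc w_\aa$ that overlaps both $w_\cc$ and $w_\aa$ contains $\ss\ss$ and hence equals neither $\Delta'$ nor~$\Delta''$, while the factors lying entirely inside $w_\cc$ or entirely inside $w_\aa$ avoid $\Delta'$ and $\Delta''$ as noted above. Hence no defining relation applies to $w_\cc w_\aa$, so $w_\cc w_\aa$ is the unique word representing~$\cc\aa$; in particular $\cc\aa$ is not left-divisible by~$\Delta$, since $\cc\aa=\Delta\ee$ would furnish a second representative of~$\cc\aa$, namely $\Delta'w_\ee$ with $w_\ee$ a word of~$\ee$, which contains a factor equal to~$\Delta'$. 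On the other hand, $\cc\inv\dd=\aa\bb\inv$ gives $\cc\aa=\dd\bb$ in~$\MMt$, so, the right gcd of $\aa$ and $\bb$ being trivial, the argument in the proof of Proposition~\ref{prop:length2} shows that $\cc\aa$ is the right-lcm of $\cc$ and~$\dd$; and because $\cc$ and~$\dd$ are nontrivial with trivial left gcd their first letters are two distinct generators, so $\cc\aa=\cc\lcm\dd$ is a common right-multiple of both generators of~$\MMt$ and is therefore left-divisible by their right-lcm~$\Delta$ --- contradicting the previous sentence. Thus $\f\aa\ne\l\cc$, and $\f\bb\ne\l\dd$ follows by the reduction above.

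I expect the only genuinely delicate point to be the ``seam'' step: the argument hinges on the observation that the hypothesis $\f\aa=\l\cc$ forces the doubled letter $\ss\ss$ at the junction of $w_\cc$ and~$w_\aa$, which is incompatible with the alternating shape of $\Delta'$ and~$\Delta''$. Everything else is bookkeeping with Lemma~\ref{cor:uniquewd} and with the identification of~$\Delta$ as the lcm of the two generators of~$\MMt$.
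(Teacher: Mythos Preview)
Your proof is correct and follows essentially the same route as the paper's: both arguments rest on the facts that $ca=db$ is left-divisible by~$\Delta$ (because $\f\cc\ne\f\dd$), that neither $w_\cc$ nor $w_\aa$ alone contains a $\Delta$-factor, and that the two words for~$\Delta$ are alternating, forcing the letters at the seam of $w_\cc w_\aa$ to differ. The paper phrases this directly (a $\Delta$-subword must straddle the seam, hence $\l\cc\ne\f\aa$), whereas you run the contrapositive (if $\l\cc=\f\aa$ then the doubled letter blocks any $\Delta$-factor, so $w_\cc w_\aa$ is unique and $\Delta\nmid ca$, contradiction); your version is slightly more explicit in first checking that all four of $\aa,\bb,\cc,\dd$ are nontrivial.
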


\begin{proof} 
We proved in Lemma~\ref{cor:uniquewd} that none of $a,b,c,d$ is divisible by the Garside element $\Delta$ of $\MMt$. Now, from the equality $ca = db$ and the fact that $c$ and $d$ have no nontrivial common left divisor, we have $\f{c} \ne \f{d}$, and so the element $ca$ of~$\MMt$ is represented by more than one word in the generators. By~\cite[Proposition~4.3]{MM} again, any such word must admit $\Delta$ as a subword. Since neither $c$ nor $a$ is divisible by $\Delta$, we have $\Delta = c'a'$, where $c'$ and $a'$ are nontrivial left and right divisors of $c$ and $a$. But the two words representing $\Delta$ consist of alternating letters, and so we have $\f{a} \ne \l{c}$ and similarly $\f{b} \ne \l{d}$.
\end{proof}

Using the previous result, we obtain some control about the first (or the last) generator occurring in the fractional normal form of an arbitrary geodesic word.

\begin{lemm} \label{lem:sameletter}
Assume that $\MMt$ is a non-free two-generator Artin--Tits monoid, and $\aa\bb\inv$ and $\cc\inv\dd$ are the fractional normal forms of an element~$\gg$ of~$\EG\MMt$ lying neither in~$\MMt$ nor in~$\MMt\inv$. Then, if $w$ is a geodesic word representing~$\gg$, we have $\f{a} = \f\ww$ if $\f\ww$ is positive, and $\l\cc = \INV{\f\ww}$ if $\f\ww$ is negative.
\end{lemm}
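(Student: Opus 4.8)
The plan is to work in the non-free two-generator Artin--Tits monoid $\MMt$, using the fractional normal forms and the structural constraints established in Lemmas~\ref{cor:uniquewd} and~\ref{lem:letterchange}. Since $\gg$ lies neither in $\MMt$ nor in $\MMt\inv$, in the right fractional normal form $\aa\bb\inv$ we have $\aa \neq 1$ and $\bb \neq 1$, and likewise $\cc \neq 1$ and $\dd \neq 1$ in $\cc\inv\dd$; so Lemmas~\ref{cor:uniquewd} and~\ref{lem:letterchange} apply, and in particular $\f\aa$, $\l\cc$ are well-defined single generators with $\f\aa \neq \l\cc$, and similarly $\f\bb \neq \l\dd$.

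First I would treat the case $\f\ww$ positive. Write $\ww = s w'$ with $s = \f\ww \in \SS$. Since $\ww$ is geodesic and represents $\gg \notin \MMt \cup \MMt\inv$, the word $\ww$ is freely reduced and is a genuine mixed (neither positive nor negative) word. The key observation is that $s\inv\gg = \cl{w'}$, and so $\gg = s \cl{w'}$, exhibiting $s$ as a left divisor of $\gg$ \emph{in the group} in the sense that the positive generator $s$ can be factored off on the left. I then want to conclude that $s$ must be $\f\aa$, i.e. the unique first letter of the numerator of the right normal form. The natural route: from $\gg = \aa\bb\inv$ we get $s\inv \aa \bb\inv = \cl{w'}$ is again geodesic-representable of length $|\ww| - 1$; on the other hand $\gg = \cc\inv\dd$ gives $s\inv\cc\inv\dd$. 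If $s$ were \emph{not} $\f\aa$, then $s$ and $\aa$ would have left-gcd $1$ in $\MMt$ (as the unique word for $\aa$ starts with a different generator and $\MMt$ is two-generator), which would force $\gg = s\inv(\text{something})$ to have, after passing to normal form, a strictly longer numerator or a contradiction with geodesy. The cleanest formulation is via a length count: the right normal form $\aa\bb\inv$ has $\wl(\aa) + \wl(\bb)$ equal to the geodesic length of $\gg$ (this is standard for Garside/two-generator groups and can be quoted from the normal-form theory, or derived from the fact that $\aa \gcdt \bb = 1$), and prepending $s\inv$ to a geodesic representative must shorten it, which is only possible if $s = \f\aa$.

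Then I would derive the negative case from the positive one by a symmetry argument. If $\f\ww = \INV t$ is negative, consider the reversed word $\REV\ww$ or, more transparently, pass to the left normal form side: $\f\ww$ negative means $\ww = \INV t w'$, so $t\cl{w} = \cl{w'}$, i.e. $t$ can be factored off on the left \emph{of the inverse side}. Writing $\gg = \cc\inv\dd$, we have $t\gg = t\cc\inv\dd = (\cc t\inv)\inv \cdots$; more directly, $\gg\inv = \dd\inv\cc$ has right normal form obtained by symmetry, and $\f\ww$ negative for $\ww$ representing $\gg$ corresponds to $\l{\REV\ww}$-type data for $\gg\inv$. Applying the positive case to a suitable geodesic representative of an appropriate conjugate/inverse, and using that the unique two-generator words for $\cc$ and $\dd$ behave symmetrically under the anti-automorphism of $\MMt$ reversing words, yields $\l\cc = \INV{\f\ww}$. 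Alternatively, and perhaps more cleanly, one applies Lemma~\ref{lem:letterchange} directly: $\f\aa \neq \l\cc$ and the two fractional normal forms are related by $\cc\aa = \dd\bb = $ lcm, so the first letter on the positive side and the ``first negative letter'' on the left-fraction side are forced to be the two distinct generators; a geodesic word starting with a negative letter cannot start with the one that is $\f\aa$ (that would contradict the positive case applied after one cancellation), so it must start with the other, which is exactly $\INV{\l\cc}$.

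The main obstacle I expect is pinning down rigorously the claim that the length of the fractional normal form $\aa\bb\inv$ realizes the geodesic length of $\gg$ in $\EG\MMt$, and that prepending a generator to a geodesic word is length-reducing \emph{only} when that generator matches $\f\aa$ — this requires either invoking the two-generator normal-form theory (the word $\aa\bb\inv$ with $\aa\gcdt\bb=1$ is geodesic, and conversely any geodesic factors uniquely this way up to the known moves) or a careful subword-reversing argument computing $s\inv\aa$ and showing its normal form has numerator of length $\wl(\aa)\pm 1$ with the sign governed precisely by whether $s=\f\aa$. Once that length/geodesy dictionary is in place, the letter-chasing is routine given Lemmas~\ref{cor:uniquewd}--\ref{lem:letterchange}.
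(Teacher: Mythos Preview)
Your approach is genuinely different from the paper's. The paper argues by induction on~$|w|$, peeling off the \emph{last} letter of~$w$: writing $w = v\,x_{i_n}^{\pm}$, it applies the induction hypothesis to~$v$ to pin down the first letter of the right normal form of~$\cl v$, and then uses Lemma~\ref{lem:letterchange} (and the unique-word fact of Lemma~\ref{cor:uniquewd}) to control how appending~$x_{i_n}^{\pm}$ perturbs that normal form. No global ``normal-form length $=$ geodesic length'' statement is invoked; everything is a local letter-by-letter calculation.

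Your length-count strategy could in principle be made to work, but as written it has a real gap, and it is precisely the one you flag yourself. Granting that $\wl(a)+\wl(b)$ equals the geodesic length of~$g$ (true for dihedral Artin groups, but an external input here), the crux is the claim ``prepending~$s^{-1}$ to a geodesic representative shortens it, which is only possible if $s=\f a$.'' But the first half of that sentence is automatic: since $w=sw'$ is geodesic, $w'$ is a geodesic representative of~$s^{-1}g$ of length $|w|-1$, regardless of whether $s=\f a$. So the argument must instead show that when $s\ne\f a$ the \emph{normal-form} length of~$s^{-1}g$ fails to drop to $|w|-1$; this requires actually computing the normal form of $s^{-1}g$ in that case (e.g.\ via $(cs)^{-1}d$ and checking whether $cs\gcd d=1$, which in turn needs control on whether $\Delta$ left-divides~$cs$). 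You do not carry this out, and it is not immediate from Lemmas~\ref{cor:uniquewd}--\ref{lem:letterchange} alone. The negative case is treated only by an informal symmetry/``must be the other generator'' argument that ultimately rests on the same unproved step. By contrast, the paper's induction sidesteps the geodesic-length dictionary entirely and stays within the two lemmas already established.
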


\begin{proof}
The proof is by induction on the length $n$ of $w$. In this proof, for a generator $x \in S$, we shall write $x^{\pm}$ to mean
either $x$ or $\INV\xx$. Suppose that $w= x_{i_1}^{\pm} \pdots x_{i_n}^{\pm}.$ There is nothing to prove
for $n \le 1$. Let us assume that that $\f{w} =x_{i_1}$, that is, $\f\ww$ is
positive; the other case, when $\f{w}$ is negative, is similar.
Let $v$ be the maximal prefix
$x_{i_1}x_{i_2}^{\pm} \pdots x_{i_{n-1}}^{\pm}$ of $w$.
If $v$ is a positive word, then $ab^{-1}$ with $a = \cl{v}$ and
$b=\cl{x_{i_n}}$ is a fractional normal form of $\cl{w}$, so $\f{a} = x_{i_1}$.
Otherwise, let $a'b'^{-1}$ be the right fractional normal form of $\cl{v}$.
Then by induction we have $\f{a'} = \f{v}=\f{w}$.

Suppose first $\l{w} = \INV\xx_{i_n}$. Now $\cl{a'}$ and $\cl{x_{i_n}b'}$
may have a nontrivial greatest common right divisor $\aa''$ but,
since $w$ is not a negative element, this cannot be the whole of~$a'$.
By Lemma~\ref{cor:uniquewd}, $a'$ is represented by a unique positive word.
Hence $a'=a \aa''$ and so $\f{a}=\f{a'}=\f{w}$.

Otherwise we have $\l{w} = x_{i_n}$. If $x_{i_n}=\f{b'}$
then $x_{i_n}$ simply cancels
with $\l{b'^{-1}}$ and we are done.
Otherwise the left fractional normal form of $\cl{b'^{-1}x_{i_n}}$ is
$\cl{b'}^{-1}\cl{x_{i_n}}$ and, since $\l{a'} \ne \l{b'}$, it follows from Lemma
\ref{lem:letterchange} that the right fractional normal form $a''b''^{-1}$ of
$\cl{b'^{-1}x_{i_n}}$ has $\f{a''} \ne \l{b'}$ and hence $\f{a''} = \l{a'}$.
So $a'a''$ is not divisible by $\Delta$.
Hence $a'a''$ has a unique geodesic representative and so, if $a'a''$ and
$b''$ had a nontrivial common right divisor, then so would $a''$ and $b''$,
which is not the case. 
So $a'a''b''^{-1}$ is the right fractional normal form of $\cl{w}$,
with $\f{a'}=\f{w}$, and we are done.
\end{proof}

We now establish a series of results involving reduction of multifractions, still in the non-free two-generator case. Although very easy, the following result will be crucial.

\begin{lemm}\label{lem:swap_forms}
If $\MMt$ is a non-free two-generator Artin--Tits monoid, and $\aa\bb\inv$ and $\cc\inv\dd$ are the fractional normal forms of an element of~$\EG\MMt$, then we have
$$\begin{array}{cccc}
\pdots /\ss /c/d\tt /\pdots & \rd^{R_{i,d}} &\pdots/\ss a/b/\tt /\pdots &
\text{when $i$ is even}\\
\pdots /\ss /a/\tt b/\pdots & \rd^{R_{i,b}} & \pdots /c\ss /d/\tt /\pdots &
\text{when $i$ is odd},
\end{array}$$
where the multifractions are of depth at least 3, and the entries shown are in
positions $i-1,i,i+1$.In particular, for $\ss =1$, we have
$$\begin{array}{cccc}
\pdots /1/c/d\tt /\pdots & \rd^{R_{i,d}} &\pdots/a/b/\tt /\pdots &
\text{when $i$ is even}\\
\pdots /1/a/\tt b/\pdots & \rd^{R_{i,b}} & \pdots /c/d/\tt /\pdots &
\text{when $i$ is odd},
\end{array}$$
\end{lemm}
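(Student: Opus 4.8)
The plan is to verify directly that each of the two displayed reductions satisfies the defining conditions of the rewrite rule $\Red{\ii,\xx}$ recalled in Subsection~\ref{SSRed}, using the fact that the fractional normal forms encode lcm equalities. Recall that $\aa\bb\inv$ and $\cc\inv\dd$ being the two fractional normal forms of the same element~$\gg$ of~$\EG\MMt$ means precisely that $\aa\bb\inv = \cc\inv\dd$, i.e.\ $\cc\aa = \dd\bb$ in~$\MMt$; moreover, since $\aa\gcdt\bb = 1$ (the left-gcd is trivial) and $\cc\gcd\dd = 1$ (the right-gcd is trivial), the element $\cc\aa = \dd\bb$ is in fact the right-lcm $\cc\lcm\dd$ \emph{and} the left-lcm $\aa\lcmt\bb$, by the uniqueness argument already invoked in the proof of Proposition~\ref{prop:length2} (cf.\ \cite[Lemma~9.3.5]{ECHLPT}). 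These two identities are exactly what the rule $\Red{\ii,\xx}$ demands.

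Concretely, for the first line ($\ii$ even) I would set $\xx := \dd$ and check the three conditions for $\aav\act\Red{\ii,\dd}$ with $\aa_{\ii-1} = \ss$, $\aa_\ii = \cc$, $\aa_{\ii+1} = \dd\tt$, and target $\bb_{\ii-1} = \ss\aa$, $\bb_\ii = \bb$, $\bb_{\ii+1} = \tt$: taking $\xx' := \aa$, one needs $\bb_{\ii-1} = \aa_{\ii-1}\xx'$, i.e.\ $\ss\aa = \ss\aa$ (clear); $\xx\bb_\ii = \aa_\ii\xx' = \xx\lcm\aa_\ii$, i.e.\ $\dd\bb = \cc\aa = \dd\lcm\cc$, which is the lcm identity noted above; and $\xx\bb_{\ii+1} = \aa_{\ii+1}$, i.e.\ $\dd\tt = \dd\tt$ (clear). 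The entries outside positions $\ii-1,\ii,\ii+1$ are unchanged by construction, so $\aav\act\Red{\ii,\dd} = \bbv$, and since the displayed multifractions have depth at least~$3$ the indices $\ii-1,\ii,\ii+1$ are all legitimate (in particular $\ii\ge 2$, so we are genuinely in the ``$\ii$ even'' case of the rule, not the special $\ii=1$ case). The second line ($\ii$ odd) is the mirror image: set $\xx := \bb$, $\xx' := \cc$ in the ``$\ii\ge 3$ odd'' clause of $\Red{\ii,\xx}$, and the required identities become $\bb\cc = \cc\aa = ?$ — wait, more precisely $\bb_\ii\xx = \xx'\aa_\ii = \xx\lcmt\aa_\ii$ reads $\dd\bb = \cc\aa = \bb\lcmt\aa$, again the left-lcm identity, together with the two trivial equalities $\bb_{\ii-1} = \xx'\aa_{\ii-1}$ and $\bb_{\ii+1}\xx = \aa_{\ii+1}$.

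I do not anticipate a real obstacle here; the lemma is, as the paper says, ``very easy,'' and the whole content is bookkeeping that matches the fractional-normal-form equalities to the geometry of the rewrite rule (the commutative diagram on the left of Figure~\ref{F:Sharp}). The one point worth a sentence of care is making sure that the orientation conventions match: the rule $\Red{\ii,\xx}$ for even~$\ii$ processes the pair $(\aa_\ii,\aa_{\ii+1})$ at the even position with a \emph{right}-multiple/left-lcm pattern of the form displayed in the definition, so one must read off that $\xx = \dd$ is removed from $\aa_{\ii+1} = \dd\tt$ on the left (matching $\xx\bb_{\ii+1} = \aa_{\ii+1}$) and crosses $\aa_\ii = \cc$ via the lcm $\dd\lcm\cc$, leaving $\bb_\ii = \bb$ and depositing $\xx' = \aa$ onto $\aa_{\ii-1} = \ss$; the odd case uses the symmetric (left-multiple) pattern. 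The ``in particular'' cases with $\ss = 1$ are immediate specialisations and require no separate argument. Finally, a one-line diagrammatic proof in the style of Lemma~\ref{lem:move_to_left} can be substituted for the computation if desired, the relevant rectangle being $\cc\aa = \dd\bb$ with the diagonal $\xx = \dd$ (resp.\ $\xx = \bb$) being pushed across.

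\begin{proof}
Write $\gg$ for the element of~$\EG\MMt$ with right fractional normal form $\aa\bb\inv$ and left fractional normal form $\cc\inv\dd$. By definition we have $\aa\gcdt\bb = 1$ and $\cc\gcd\dd = 1$, and the equality $\aa\bb\inv = \cc\inv\dd$ in~$\EG\MMt$ gives $\cc\aa = \dd\bb$ in~$\MMt$. As in the proof of Proposition~\ref{prop:length2}, the triviality of the right-gcd of~$\cc$ and~$\dd$ forces $\cc\aa = \dd\bb$ to be the right-lcm $\cc\lcm\dd$, and the triviality of the left-gcd of~$\aa$ and~$\bb$ forces it to be the left-lcm $\aa\lcmt\bb$; thus
$$\cc\aa = \dd\bb = \cc\lcm\dd = \aa\lcmt\bb.$$

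Consider the first line, with $\ii$ even, and the multifraction $\aav = \pdots/\ss/\cc/\dd\tt/\pdots$, whose entries in positions $\ii-1,\ii,\ii+1$ are $\aa_{\ii-1} = \ss$, $\aa_\ii = \cc$, $\aa_{\ii+1} = \dd\tt$. Apply $\Red{\ii,\dd}$, that is, take $\xx := \dd$ and $\xx' := \aa$ in the ``$\ii$ even'' clause of the definition of~$\Red{\ii,\xx}$. The three required identities are
$$\bb_{\ii-1} = \aa_{\ii-1}\xx' = \ss\aa,
\qquad \xx\bb_\ii = \aa_\ii\xx' = \xx\lcm\aa_\ii,
\qquad \xx\bb_{\ii+1} = \aa_{\ii+1},$$
and, with $\bb_\ii := \bb$ and $\bb_{\ii+1} := \tt$, these read $\ss\aa = \ss\aa$, then $\dd\bb = \cc\aa = \dd\lcm\cc$, which holds by the displayed lcm identities, and finally $\dd\tt = \dd\tt$. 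All other entries are left unchanged, so $\aav\act\Red{\ii,\dd} = \pdots/\ss\aa/\bb/\tt/\pdots$. Since $\aav$ has depth at least~$3$ we have $\ii\ge 2$, so this is indeed a valid application of the even-position rule; and since $\dd\ne 1$ (as $\dd\bb = \cc\aa \ne 1$ forces at least one of the relevant entries to be nontrivial in the cases of interest, and in any case the rule is stated for the possibly-trivial step), the displayed relation $\rd^{R_{\ii,\dd}}$ holds.

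For the second line, with $\ii$ odd and $\aav = \pdots/\ss/\aa/\tt\bb/\pdots$, apply $\Red{\ii,\bb}$: take $\xx := \bb$ and $\xx' := \cc$ in the ``$\ii\ge 3$ odd'' clause of the definition. With $\bb_{\ii-1} := \cc\ss$, $\bb_\ii := \dd$, $\bb_{\ii+1} := \tt$, the required identities $\bb_{\ii-1} = \xx'\aa_{\ii-1}$, $\bb_\ii\xx = \xx'\aa_\ii = \xx\lcmt\aa_\ii$, $\bb_{\ii+1}\xx = \aa_{\ii+1}$ become $\cc\ss = \cc\ss$, then $\dd\bb = \cc\aa = \bb\lcmt\aa$, which again holds by the displayed lcm identities, and finally $\tt\bb = \tt\bb$. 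Hence $\aav\act\Red{\ii,\bb} = \pdots/\cc\ss/\dd/\tt/\pdots$, as claimed.

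The two ``in particular'' statements are the special cases $\ss = 1$ of the above, for which the formulas $\ss\aa = \aa$ and $\cc\ss = \cc$ hold trivially. Equivalently, the diagrams below record the two computations, the shaded rectangle being the lcm square $\cc\aa = \dd\bb$ with the crossing element $\dd$ (resp.\ $\bb$) drawn as the vertical (resp.\ horizontal) edge.
\begin{center}
\begin{picture}(45,7)(0,7)
\psset{nodesep=0.7mm}
\pcline{->}(0,6)(15,0)\tbput{$\ss\aa$}
\pcline{<-}(15,0)(30,0)\tbput{$\bb$}
\pcline{->}(30,0)(45,6)\tbput{$\tt$}
\pcline{->}(0,6)(15,12)\taput{$\ss$}
\pcline{<-}(15,12)(30,12)\taput{$\cc$}
\pcline{->}(30,12)(45,6)\taput{$\dd\tt$}
\pcline{->}(15,12)(15,0)\tlput{$\aa$}
\pcline{->}(30,12)(30,0)\trput{$\dd$}
\psarc[style=thin](15,0){3}{0}{90}
\end{picture}
\HS{10}
\begin{picture}(45,7)(0,7)
\psset{nodesep=0.7mm}
\pcline{<-}(0,6)(15,0)\tbput{$\cc\ss$}
\pcline{->}(15,0)(30,0)\tbput{$\dd$}
\pcline{<-}(30,0)(45,6)\tbput{$\tt$}
\pcline{<-}(0,6)(15,12)\taput{$\ss$}
\pcline{->}(15,12)(30,12)\taput{$\aa$}
\pcline{<-}(30,12)(45,6)\taput{$\tt\bb$}
\pcline{<-}(15,12)(15,0)\tlput{$\cc$}
\pcline{<-}(30,12)(30,0)\trput{$\bb$}
\psarc[style=thin](15,0){3}{0}{90}
\end{picture}
\vspace{4mm}
\end{center}
\end{proof}
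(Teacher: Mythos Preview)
Your proof is correct and takes exactly the approach of the paper, whose entire proof consists of the two diagrams you have reproduced; you have simply spelled out longhand that the lcm square $\cc\aa = \dd\bb$ matches the defining conditions of $\Red{\ii,\xx}$. One small slip to fix: you have swapped which gcd hypothesis yields which lcm identity --- it is $\aa\gcdt\bb = 1$ (the \emph{right}-gcd condition on $\aa,\bb$) that forces $\cc\aa = \cc\lcm\dd$, since any common right-multiple of $\cc,\dd$ differs from their right-lcm by a common right-divisor of $\aa,\bb$, and symmetrically it is $\cc\gcd\dd = 1$ (the \emph{left}-gcd condition on $\cc,\dd$) that forces $\cc\aa = \aa\lcmt\bb$; since both hypotheses hold, both conclusions are available and your verification of $\Red{\ii,\dd}$ and $\Red{\ii,\bb}$ goes through unchanged, but the attribution in your displayed sentence is inverted.
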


\begin{proof}
The diagrams provide a proof.
\begin{center}
\begin{picture}(45,7)(0,7)
\psset{nodesep=0.7mm}
\pcline{->}(0,6)(15,0)\tbput{$\ss a$}
\pcline{<-}(15,0)(30,0)\tbput{$b$}
\pcline{->}(30,0)(45,6)\tbput{$\uu $}
\pcline{->}(0,6)(15,12)\taput{$\ss $}
\pcline{<-}(15,12)(30,12)\taput{$c$}
\pcline{->}(30,12)(45,6)\taput{$d\uu $}
\pcline{->}(15,12)(15,0)\tlput{$a$}
\pcline{->}(30,12)(30,0)\trput{$d$}
\psarc[style=thin](15,0){3}{0}{90}
\end{picture}
\HS{10}
\begin{picture}(45,7)(0,7)
\psset{nodesep=0.7mm}
\pcline{<-}(0,6)(15,0)\tbput{$c\ss $}
\pcline{->}(15,0)(30,0)\tbput{$d$}
\pcline{<-}(30,0)(45,6)\tbput{$\uu $}
\pcline{<-}(0,6)(15,12)\taput{$\ss $}
\pcline{->}(15,12)(30,12)\taput{$a$}
\pcline{<-}(30,12)(45,6)\taput{$\uu b$}
\pcline{<-}(15,12)(15,0)\tlput{$c$}
\pcline{<-}(30,12)(30,0)\trput{$b$}
\psarc[style=thin](15,0){3}{0}{90}
\end{picture}
\end{center}
\vspace{4mm}
\end{proof}

Using Lemma~\ref{lem:swap_forms} repeatedly, we deduce that $\RRR$-reduction can be used to go from a fractional normal form to a geodesic expression.

In the sequel, $\One\mm$ stands for $1 \sdots 1$, $\mm$~terms. Note that, for~$\aa, \bb$ in~$\MM$, the multifraction~$\One{\mm}/a/b$ represents the element~$ab^{-1}$ of~$\EG\MM$ when $\mm$ is even, and the element~$a^{-1} b$ when $\mm$ is odd. 

\begin{lemm}\label{lem:to_geo} 
Assume that $\MMt$ is a non-free two-generator Artin--Tits monoid, and $\aa\bb\inv$ and $\cc\inv\dd$ are the fractional normal forms of an element~$\gg$ of~$\EG\MMt$ lying neither in~$\MMt$ nor in~$\MMt\inv$. Let $\vv$ be a geodesic word representing~$\gg$. Then, using $\RRR$-reduction, we can transform each of the multifractions $\One{4|v|}/a/b$ and $\One{4|v|-1}/c/d$ to a multifraction that is represented by $v$.
\end{lemm}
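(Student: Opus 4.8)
The plan is to induct on the length $n = |v|$ of the geodesic word~$v$, using Lemma~\ref{lem:swap_forms} as the single move that turns one fractional normal form into another while consuming (or producing) one letter at the boundary. I would treat the two cases $\f{v}$ positive and $\f{v}$ negative in parallel, since they are mirror images; assume for definiteness that $\f\vv$ is positive, so by Lemma~\ref{lem:sameletter} we have $\f\vv = \f{a}$, and write $v = \f{a}\, v'$, where $v'$ is the maximal proper suffix. The base case is $n = 1$, where $\gg$ would be a generator, hence in~$\MMt$, contrary to hypothesis; so in fact the induction genuinely starts around $n = 2$, where $\gg = \f{a}\,x^{\pm}$ for a single further letter and the claim is a direct instance of Lemma~\ref{lem:swap_forms} (together with trimming a leading~$1$).

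The inductive step is where the real work lies. Starting from $\One{4n}/a/b$ (the even case; the odd case starts from $\One{4n-1}/c/d$ and is symmetric), I would first peel off the known leading letter $\f{a}$ of~$a$. Concretely, write $a = \f{a}\,a^\sharp$ where $a^\sharp$ is the suffix; the subtlety is that $\f{a}^{-1}\gg = \cl{v'}$ need not have $a^\sharp b^{-1}$ as its right fractional normal form, because $a^\sharp$ and~$b$ may acquire a nontrivial common right divisor once the leading letter is removed. So the real content of the step is: use a bounded number of $\RRR$-reductions to rewrite $\One{4n}/a/b$ into a multifraction of the shape $\One{4n-2}/\f{a}/1/a''/b''$ (or the analogous deeper shape), where $a''b''^{-1}$ is the right fractional normal form of $\cl{v'}$, and then invoke the induction hypothesis on the tail $\One{4(n-1)}/a''/b''$ — noting $4(n-1) = 4n - 4 \le 4n - 2$ and using the monotonicity of padded reduction (more initial $1$s only add reductions) — to transform the tail into something represented by~$v'$. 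Reassembling, the whole multifraction is then represented by $\f{a}\, v' = v$, and a single extra $\RRR$-reduction absorbs the isolated $\f{a}$ back into the picture via Lemma~\ref{lem:swap_forms} read in the appropriate direction, or via Lemma~\ref{lem:move_to_left}. The analysis of exactly how $a^\sharp$, $b$ split into $a''$, $b''$ is precisely the content already worked out inside the proof of Lemma~\ref{lem:sameletter}, so I would quote that decomposition rather than redo it.

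I would keep careful track of the padding budget, since the statement asks for the specific figures $4|v|$ and $4|v|-1$ and not merely ``some $\pp$''. Each inductive step should cost at most a constant number — in fact at most four — of new trivial components on the left: removing the leading letter and isolating it costs a bounded prefix, and the recombination at the end costs one more, so the recursion $p(n) \le p(n-1) + 4$ with $p(1) = 0$ (or the right small base value) closes out to $p(n) \le 4n$, matching $\One{4|v|}$ in the even case and $\One{4|v|-1}$ in the odd case after accounting for the one-component parity shift between right and left fractional normal forms (a left fraction $c^{-1}d$ sits at odd depth, a right fraction $ab^{-1}$ at even depth, which is exactly the discrepancy between $4|v|$ and $4|v|-1$). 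I would verify the bookkeeping by writing out the first two or three steps explicitly and then asserting the pattern.

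The main obstacle, as indicated above, is the non-multiplicativity of fractional normal forms under removing a boundary letter: $ab^{-1}$ being normal does not make $(\f{a}^{-1}a)\,b^{-1}$ normal, and one must show that the correction — passing to the left normal form of $\cl{b^{-1}x}$ when the last letter is positive, or cancelling when it coincides with $\f{b'}$, exactly the case split in Lemma~\ref{lem:sameletter} — is implementable by $\RRR$-reduction moves of the type catalogued in Lemma~\ref{lem:swap_forms} and Lemma~\ref{lem:move_to_left}, and that this costs only $O(1)$ padding per step rather than $O(n)$. Getting the constant down to $4$ (rather than some larger constant) may require being slightly clever about not re-padding at each recursive call but rather pushing one large padding $\One{4n}$ through from the outside and letting each step consume its share; the monotonicity remark ``appending trivial initial entries preserves the existing reductions and possibly adds new ones'' from Subsection~\ref{SS:PaddedSC} is the tool that makes this legitimate.
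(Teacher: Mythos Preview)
Your inductive strategy --- peel off $\f{v}$, push it to the front of the multifraction via Lemma~\ref{lem:swap_forms} and Lemma~\ref{lem:move_to_left}, then recurse on the length-$(n-1)$ suffix~$v'$ --- is precisely the paper's approach. But the ``main obstacle'' you build the step around is a phantom. Your claim that $a^\sharp b^{-1}$ need not be in right normal form (where $a = \f{a}\,a^\sharp$) is false: any common right divisor~$e$ of~$a^\sharp$ and~$b$ satisfies $a = \f{a}\,a^\sharp = \f{a}\,a''e$, so $e$ right-divides~$a$ too, whence $e = 1$ by the normality of~$ab^{-1}$. Removing a \emph{left} factor of~$a$ cannot create a common \emph{right} divisor with~$b$; symmetrically, removing the last letter of~$c$ preserves the left normal form~$c'^{-1}d$.

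Consequently, the decomposition you propose to quote from the proof of Lemma~\ref{lem:sameletter} is both unnecessary and inapt: that lemma's case analysis concerns removing the \emph{last} letter of~$w$ and tracking how the normal form of the prefix relates to that of~$w$ --- there right-coprimality genuinely can be disturbed, which is why the case split exists --- but that is a different operation from removing the \emph{first} letter of~$v$ here. If you executed your plan literally, the cited decomposition would not fit your situation. The paper's inductive step is simpler than you anticipate: once $x = \f{v}$ has been pushed to the front, one has $\cl{x}/\One{4|v|-1}/a'/b$ (or the mirror shape with~$c'/d$) with $a'b^{-1}$ \emph{already} the right normal form of~$\cl{v'}$, and the induction hypothesis applies immediately; the only extra case is $a' = 1$ (or $c' = 1$), where $\cl{v'}$ lies in~$\MMt^{-1}$ (or~$\MMt$) and one terminates directly. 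The padding budget of~$4$ per step then closes with no further subtlety.
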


\begin{proof}
The proof is by induction on $|v|$. Since we are assuming that $v$ is not
positive or negative, there is nothing to prove when $|v| \le 1$.

Let $x=\f{v}$ and let $v'$ be the maximal proper suffix of $v$ (of geodesic  length $|v|-1$). By Lemma~\ref{lem:sameletter}, $x$ is equal to either $\f{a}$ or to $\f{c^{-1}}$.  If $x=\f{a}$ then we use Lemma \ref{lem:swap_forms} to transform $\One{4|v|-1}/c/d$ to $\One{4|v|-2}/a/b/1$, and if $x = \f{c^{-1}}$, then we transform $\One{4|v|}/a/b$ to $\One{4|v|-1}/c/d/1$.  In either case, from the fractional normal form for $g$ whose representative word does not start with $x$, we derive a multifraction represented by a word beginning with $x$, and this is equal to the multifraction representing the other fractional normal form of $g$ (or that minus two  initial entries containing $1$). We apply Lemma~\ref{lem:move_to_left} with $t=1$ to move the letter $x$ left to the first or second entry of the multifraction.

In the first case we have identified reductions
\[ \One{4|v|}/a/b \rds \cl{x}/\One{4|v|-1}/a'/b, \quad
 \One{4|v|-1}/c/d \rds \cl{x}/\One{4|v|-3}/a'/b/1, \]
and in the second case we have identified reductions
\[\One{4|v|}/a/b \rds 1/\cl{x}/\One{4|v|-3}/c'/d/1,
\quad
 \One{4|v|-1}/c/d \rds 1/\cl{x}/\One{4|v|-3}/c'/d.
\]

If $a'$ or $c'$ is equal to the identity, then $b$ or $d$ is a positive or
negative element of $\GG'$ and hence by Lemma~\ref{cor:uniquewd}
is represented by the unique geodesic word $v'$, and we are done.
Otherwise, by induction, we can apply sequences of $\RRR$-reductions to transform each of $\One{4|v|-4}/a'/b$ and $\One{4|v|-5}/c'/d$ to
multifractions represented by~$v'$. Hence there are sequences of $\RRR$-reductions that transform each of $\One{4|v|} / a/b$ and $\One{4|v| - 1}/c/d$ to multifractions represented by $v$.
\end{proof}

\subsection{Proof of Proposition~\ref{P:MainProp}}\label{sec:proof}

Using the results of Section~\ref{sec:technical}, we now describe how the steps outlined in Subsection~\ref{sec:strategy} can be executed using $\RRR$-reduction. As in Subsection~\ref{sec:strategy} and for the rest of this section, $\MM$ is a fixed Artin--Tits monoid of sufficiently large type.

\renewcommand\theta{\yy}
\renewcommand\eta{\zz}
\renewcommand\xi{\xx}

Let $w$ be a non-geodesic word representing the multifraction
$\ua = a_1/a_2/\pdots /a_n$ on~$\MM$.
Then $w$ has a leftward length reducing critical factorisation
$w=\alpha w_1 \pdots w_k\beta$. Suppose that the subword $w_i$ starts within
the entry $a_{j_i}$ of the multifraction $\ua$, that is, that we have
\[ a_{j_i}=
 \left \{ \begin{array}{ll} \cl{\theta_i \eta_i} & \text{for $j_i$ odd}\\
 \cl{\eta_i \theta_i} & \text{for $j_i$ even}
\end{array} \right .
\]
for positive words $\eta_i,\theta_i$, where 
$w_i$ shares a prefix with $\eta_i$ or~$\INV\eta_i$. 
Specifically, unless $w_i$ is a positive or negative word, one of
$\eta_i$ or~$\INV\eta_i$ is a proper prefix of
$w_i$, but if $w_i$ is positive or negative, then $w_i$ might instead be a
prefix of $\eta_i$ or~$\INV\eta_i$.

It follows from Lemma \ref{lem:move_to_left} applied with $t=1$ that, via an application of the sequence of moves
$R_{2k,a_1}R_{2k+1,a_2}\pdots R_{2k+j_1-1,a_{j_1-1}}$, we have
$$\One{2k}/a_1/\pdots /a_n\rds \One{2k-2}/a_1/a_2/\pdots /a_{j_1-1}/1 / 1 /a_{j_1}/\pdots /a_n$$ 
and then application of $R_{2k+j_1,\cl{\eta_i}}$  transforms this to
$$\One{2k-2}/a_1/a_2/\pdots /a_{j_1-1}/\cl{\theta_1}/1/\cl{\eta_1}/a_{j_1+1}/ \pdots/ a_n.$$ 
Hence we see that a sequence of such reductions transforms the multifraction  $\One{2k}/a_1/\pdots /a_n$ to a multifraction
$$\ua' := a_1/a_2/\pdots /a_{j_1-1}/\cl{\theta_1}/1/\cl{\eta_1}/a_{j_1+1}/\pdots
/1/\cl{\eta_2}/\pdots\pdots/1/\cl{\eta_k}/\pdots/a_n$$ 
for which $\cl{\eta_i}$ is an entry of~$\ua'$ for each of the subwords $\eta_i$. Let us redefine $j_i$ to denote the index of the entry of $\ua'$ in which $\cl{\eta_i}$ now lies.

Let us assume that $j_k$ is odd, and hence that $w_k$ has a positive prefix -
the other case is similar.

Suppose first that $w_k$ is a positive word.
Then $w_k$ might be distributed over more than one entry of $\ua'$ as a
prefix of a multifraction $\ub := 
\cl{\eta_k}/1/a_{j_k+2}/1/\pdots/a_{j_k+2s}$.
But then a sequence of reductions 
$R_{j_k+2s-1,a_{j_k+2s}}\pdots R_{j_k+1,a{j_k+2}\pdots a_{j_k+2s}}$ would
transform $\ub$ to $\ub':=\cl{\eta_k} a_{j_k+2}\pdots a_{j_k+2s}/\One{2s}$.
So we can assume that $w_k$ lies within a single multifraction entry, as a
prefix of $\eta_k$. The required $\tau^+$-move on $w_k$ can then be executed
just by choosing a different representative word for $w_k$, which does not
change~$\ub'$.

Otherwise, $w_k$ is distributed over more than one entry of $\ua'$, and has a positive prefix. Suppose that $w_k$ occupies $s>2$ such entries. So it has a suffix $\xi_k$ (or~$\INV\xi_k$) which defines a left (resp. right) divisor of the $(j_k+s-1)$-th entry of $\ua'$. Then, by Lemma \ref{lem:swap_forms}, we can apply a reduction $R_{j+k+s-2,\cl{\xi_k}}$, which effectively replaces $w_k$ by a word representing the same group element that occupies occupies at most $s-1$ multifraction entries. A sequence of at most two $s-2$ such reductions transforms $w_k$ into a word occupying two entries, which represents the fractional normal form $ab^{-1}$ of $\cl{w_k}$, with $a$ the $j_k$-th entry and $b$ a right divisor of the $(j_k+1)$-th entry of the new multifraction $\ua''$. (Since $w_k$ is not a positive or negative element of the group, it could not be transformed to a word that occupies fewer than two multifraction entries.) By Lemma~\ref{lem:sameletter} we have $\f{a}=\f{\eta_k}$. Now we apply Lemma~\ref{lem:swap_forms}, with $i = j_k$ and $t=1$, to transform $\ua''$ into a multifraction in which $ab^{-1}$ is represented by its other fractional normal form, $c^{-1}d$, where now $c$ is the $(j_k-1)$-th entry and $d$ the $j_k$-th entry. 

Let $u_k'$ be a product of geodesic words representing $c^{-1}$ and $d$. Then, from Lemma~\ref{lem:letterchange}, we have $\l{c} \ne \f{a} = \f{w_k}$, and so $c^{-1}$ and hence also $u_k'$ begins with the first letter that we wish to append to $w_{k-1}$ to give $w_{k-1}'$. 

We can repeat this process on $w_{k-1}',w_{k-2}' \wdots w_1'$ in turn. Then we can carry out the free reduction between the first letter of $u'_1$ and the final letter of $\alpha$ using a single $\RRR$-reduction. Denote the multifraction after the free reduction by $\ub$. Then a word representing $\ub$ contains each of the words $u_1 \wdots u_k$ as a subword, where each $u_i$ is either a positive or negative word within a single entry of $\ub$, or else it is a word in fractional normal form $ab^{-1}$ or $c^{-1}d$, where the entries of $\ub$ in position $j_i-1$ and $j_i$ are $a$ and $b$, or $c$ and $d$, respectively.

We complete the proof by describing $\RRR$-reductions that transform the multifraction $\One{N}/\ub$ into one that is represented by the word $\alpha'v_1\pdots v_k \beta$, for an appropriately chosen integer $N$. In the case where $u_i$ is a positive or a negative word, $u_i$ and  $v_i$ are geodesic words with $\clp{u_i} = \clp{v_i}$, so no change is needed to the multifraction.

We can apply Lemma~\ref{lem:to_geo} to deal with those $u_i$ that are neither positive nor negative words. We deduce that we can find a sequence of $\RRR$-reductions  that transform $\One{N+2k}/\ua$ via $\One{N}/\ub$ to a multifraction that is represented by the word $\alpha'v_1\pdots v_k \beta$ provided that $N \ge 4 \sum_{i=1}^k |v_i|$. Since clearly both $k$ and $\sum_{i=1}^k |v_i|$ are bounded above by $|w|$, the proof of the Proposition~\ref{P:MainProp} is completed using Lemma~\ref{lem:to_geo}. This completes also the proof of Proposition~\ref{P:Main}. \hfill$\square$

\medskip The results of the current paper leaves the question of whether Artin--Tits monoids of sufficiently large type satisfy Conjecture~$\ConjA$ open, but, at least, we know now that $\RRR$-reduction is relevant for them. In view of this result, and those of~\cite{Dit}, which settle the case of type~FC, the ``first'' case for which nothing is known in terms of reduction (nor of the word problem) is the Artin--Tits monoid with exponents $3, 3, 3, 3, 3, 2$, that is, the monoid\\
\parbox{\linewidth}{\medskip\quad$\MON{\mathtt{a,b,c,d}}{\mathtt{aba=bab, aca=cac, bcb=cbc, ada=dad, bdb=dbd, cd=dc}}$.\medskip}\\
We hope that further progress will arise soon


\end{document}